\DeclareMathOperator*{\res}{Res}
\journal{Journal of \LaTeX\ Templates}
\makeatletter \@addtoreset{equation}{section}
\newtheorem{prop}{Proposition}[section]
\newtheorem{thm}[prop]{Theorem}
\newtheorem{cor}[prop]{Corollary}
\newtheorem{lem}[prop]{Lemma}
\theoremstyle{definition}
\newtheorem{rem}[prop]{Remark}
\newtheorem{assum}[prop]{Assumption}
\newtheorem{RHP}[prop]{RH Problem}
\renewcommand{\baselinestretch}{1.25}
\begin{document}

\begin{frontmatter}

\title{On the asymptotic stability of $N$-soliton solutions of the modified nonlinear Schr\"{o}dinger equation\tnoteref{mytitlenote}}
\tnotetext[mytitlenote]{
Corresponding author.\\
\hspace*{3ex}\emph{E-mail addresses}: sftian@cumt.edu.cn,
shoufu2006@126.com (S. F. Tian) }

\author{Jin-Jie Yang, Shou-Fu Tian$^{*}$ and Zhi-Qiang Li}
\address{School of Mathematics, China University of Mining and Technology,  Xuzhou 221116, People's Republic of China}

\begin{abstract}\quad
The Cauchy problem of the modified nonlinear Schr\"{o}dinger (mNLS) equation with the finite density type initial data is investigated via $\overline{\partial}$ steepest descent method. In the soliton region of space-time $x/t\in(5,7)$, the long-time asymptotic behavior of the mNLS equation is derived for large times. Furthermore,
for general initial data in a non-vanishing background, the soliton resolution conjecture for the mNLS equation is verified, which means that the asymptotic expansion of the solution can be characterized by finite number of soliton solutions as the time $t$ tends to infinity, and a residual error $\mathcal {O}(t^{-3/4})$ is provided.
\end{abstract}

\begin{keyword}
The modified nonlinear Schr\"{o}dinger equation \sep The finite density type initial data \sep  $\overline{\partial}$ steepest descent method.
\end{keyword}

\end{frontmatter}

\tableofcontents

\section{Introduction}\quad
Nonlinear Schr\"{o}dinger (NLS) equation is a classical integrable  model with Lax pairs, infinite conservation laws and Hamiltonian structures
\begin{align}\label{e1}
iq_t+q_{xx}+2|q|^2q=0,
\end{align}
which can be used to describe nonlinear optics, plasma and other phenomena. However, in some physical applications, such as the propagation of femtosecond optical pulses, it is necessary to add some higher-order effects to the NLS equation. In addition, for describing the resonance phenomenon, it is necessary to consider the multi-component formal equation.  This has led to a general interest in the study of higher order integrable models.

A completely integrable system, the modified nonlinear Schr\"{o}dinger equation \cite{Wadati-JPSJ},  was firstly proposed by Wadati et al. in 1970s
\begin{align}\label{e2}
q_t-iq_{xx}+\gamma\left(|q|^2q\right)_x+i\beta|q|^2q=0,
\end{align}
where $\alpha$ and $\beta$ are constants. This integrable model is of great significance since it can be reduced to different integrable models for taking different values about $\gamma$ and $\beta$ as follows.
\begin{enumerate}[(i)]
\item For $\gamma=0$, the model \eqref{e2} reduces to the classical NLS equation \eqref{e1}.
\item For $\beta=0$, the Kaup-Newell equation \cite{Kundu-1} can be derived from the model \eqref{e2}.
\item For $\gamma,\beta\neq0$, the model is equivalent to the modified Schr\"{o}dinger (mNLS) equation (or the perturbation NLS equation \cite{Maimistov-1993})
\begin{align}\label{q1}
iq_{t}+q_{xx}+2|q|^{2}q+i\frac{1}{\alpha}\left(|q|^{2}q\right)_{x}=0,
\end{align}
where $\alpha$ is a positive constant.
\end{enumerate}
Different from the classical NLS equation, the mNLS equation has static local solutions when the effective nonlinear parameters satisfy special conditions, which was shown in  \cite{Brizhik-2003}. The mNLS equation \eqref{q1} has many applications in physics, such as describing gravitational waves in deep water \cite{Stiassnie-1984}, Alfv\'{e}n waves propagating along the magnetic field in cold plasma \cite{Mio-1976}, and short pulses propagating in long optical fibers with nonlinear refractive index \cite{Nakatsuka-1981,Tzoar-1981}. Additionally, the nonlinear term $i\left(|q|^{2}q\right)_{x}$ appearing in \eqref{q1} is called self-steepening effect term, which leads to the propagation of optical pulse asymmetric and steepens upward at the trailing edge \cite{Agrawal-2007,Yang-2010}. Therefore, the mNLS equation has been widely concerned. By using analytic methods, Hirota method and Darboux transformation, different types of solutions were constructed, including soliton solutions, rational solutions and rouge wave solutions \cite{Mihalache-1993,Liu-1993,He-2015,Wen-2015}. For the initial value in Schwartz space, the $N$-soliton solutions of the mNLS equation \eqref{q1} were obtained by the inverse scattering method and the dressing method \cite{Chen-1990,Chen-1991,Doktorov-2007}. Additionally, the unique existence, existence of global solution, weak solution and other properties of solutions to the mNLS equation \eqref{q1}  with initial value problem were studied in \cite{Guo-1991,Tian-1994,Tian-2004}.

Recently, Fan's team studied the soliton solutions of equation \eqref{q1} under non-zero boundary conditions (NZBCs) by using the Riemann-Hilbert (RH) problem \cite{Yang-PD-2021}, and studied the long-time asymptotic behavior of the equation under the condition that the initial value belongs to Schwarz space \cite{Chen-JMAA} by using the nonlinear steepest descent method proposed by Defit and Zhou (DZ) in \cite{DZ-1993}. Our purpose in the present work is to study the asymptotic stability of $N$-soliton solution of the mNLS equation \eqref{q1} under the condition of NZBCs by using the classical DZ method and the Dbar method, i.e.,  under the NZBCs
\begin{align}\label{q2}
q(x,t)=q_{\pm}e^{-4i\alpha^{2}t+2i\alpha x},\quad x\rightarrow\pm\infty,
\end{align}
where $q_{\pm}$ are independent of variables $x$ and $t$ and $|q_{\pm}|=1$, we will  verify that the soliton resolution conjecture holds for the initial value $q(x,0)=q_0$ satisfying the conditions $q_0-q_{\pm}\in H^{1,1}(\mathbb{R})$.

Actually, the soliton resolution conjecture is a vague statement, but it is widely believed that for many globally well posed nonlinear dispersion equations, the time evolution of general initial data can be decomposed into finite soliton sequences with different velocities plus the dispersive radiation component in the large time limit. For the fact that the linear Schr\"{o}dinger equation is dispersive, the nonlinear term can balance the dispersive term and produce  stable soliton solutions when the nonlinear term is considered. For the NLS equation \eqref{e1}, there are two cases, focusing and defocusing, which correspond to the non self-adjoint operator and self-adjoint operator respectively. With the fast decay initial value, the focusing NLS equation has bright soliton solution, while the defocusing NLS equation has no soliton solution at this time.

For the focusing NLS equation with the initial data belongs to Schwarz space, McLaughlin and Miller in \cite{McLaughlin-1,McLaughlin-2}
proposed a new asymptotic analysis method, the so-called Dbar steepest descent method, combining Dbar problem \cite{Beals-Coifman,Book,ZJY-JPA,FEG-AML} with  Riemann-Hilbert (RH) problem \cite{Ablowitz-1991,GBL-JMAA,RHP-1,RHP-2} to verify that the soliton decomposition conjecture is true, and a cone was found, which can be used to approximate $N$-soliton solution by using solitons appearing on the cone in \cite{fNLS}. This method is also successfully applied to the defocusing NLS equation \cite{McLaughlin-3,SandRNLS}. It is worth noting that in \cite{SandRNLS}, Cuccagna and Jenkins studied the the defocusing NLS equation with initial value of finite density type, in which it has dark soliton solutions, and exhibited  the asymptotic stability results of $N$-dark soliton solutions. Then this method is used to study the soliton decomposition of many integrable models, including the modified KdV equation \cite{LJQ-1}, derivative NLS equation \cite{LJQ-2,Jenkins-CMP,Jenkins-CPDE,Y-DNLS}, the short pulses equation \cite{Fan-1,Fan-2}, Fokas-Lenells equation \cite{Fan-3}, the nonlocal NLS equation \cite{NNLS}, the modified Camassa-Holm equation \cite{Fan-4}, Hirota equation \cite{Yang-Hirota} and other equations. Certainly, the long-time asymptotic behavior of NLS equation has also been studied by using the classical nonlinear descent (DZ) method. The advantage of the Dbar steepest descent method is that it not only improves the error accuracy, but also avoids the $L^p$-estimation involving Cauchy projection operator.
Specifically, via the inverse scattering method, the long-time asymptotic behavior of the defocusing NLS equation without the error term $\varepsilon(x,t)$ was firstly established by Zakharov \cite{Zakharov-1976} in 1976. For sufficiently smooth initial value, the error term $\varepsilon(x,t)=\mathcal {O}(t^{-1}\log t)$ was shown in  \cite{Deift-1994,DZ-1994}. Subsequently, for the initial value belongs to the weighted Sobolev space $H^{1,1}(\mathbb{R})$, the error term was improved to  $\varepsilon(x,t)=\mathcal {O}(t^{-(\frac{1}{2}+\iota)})$ for any $0<\iota<\frac{1}{4}$ in \cite{DZ-Sobolev}. Additionally, a series of excellent results have been obtained by using the classical DZ method, including the focusing NLS equation \cite{NLS-CMP-BA,NLS-CMP-BG}, the short pulse equation \cite{Xu-JDE1,Xu-JDE2}, Gerdjikov-Ivanov equation \cite{Tian-PAMS} and other equations \cite{Geng-CMP,Liu-CMS}.

Since the RH method is used to obtain the soliton solution of the mNLS equation \eqref{q1} under the NZBCs \eqref{q2} shown in \cite{Yang-PD-2021}, and the classical DZ method is also used to obtain the long-time asymptotic behavior of the initial value belonging to the Schwarz space \cite{Chen-JMAA}. Hereafter the main purpose of this work is to study the long-time asymptotic behavior of the solution under the NZBCs \eqref{q2}, and analyze the asymptotic stability of the soliton solution by  Dbar steepest descent method.  Compared with the known results, this work has the following improvements.
\begin{enumerate}[(I)]
\item The initial data $q$ belongs to the Schwarz space in \cite{Chen-JMAA}, then the asymptotic solution is of the form
\begin{align*}
q(x,t)=t^{-1/2}\alpha(z_{0})e^{\frac{i\pi^2}{4t}-i\nu(z_{0})\log8t}e^{-\frac{i\alpha}{2\pi}\int_{-\infty}^{\lambda_0}
\log(1-|r(\lambda)|^2)(a\lambda-b)d\lambda}+\mathcal {O}(t^{-1}\log t).
\end{align*}
In our work, we consider the NZBCs \eqref{q2}, and the error is improved to  $\mathcal {O}(t^{-3/4}).$
\item The Dbar steepest descent method is used in this work, instead of the classical DZ steepest descent method.
The main difference between the two methods is that the Dbar steepest descent method does a continuous extension of the jump matrix after the triangular decomposition, while the classical DZ method is an analytic extension.
\item  The existence of solitons is also considered, that is, the existence of discrete spectrum is considered in the original RH problem, which makes it difficult to establish long-time asymptotic solutions for the mNLS equation  under NZBCs.
\item  Compared with the asymptotic solutions in partial differential equations, we verify the soliton resolution conjecture of the modified NLS equation, and give its exact expression, including the exact soliton solution and error term $\mathcal {O}(t^{-3/4}).$
\end{enumerate}

Some notations related to this work are given as follows:
\begin{enumerate}[(i)]
\item  The Sobolev space
\begin{align*}
W^{j,k}(\mathbb{R})=\left\{f(x)\in L^k(\mathbb{R}):\partial_x^mf(x)\in L^k(\mathbb{R}),~m=1,\ldots,j~\right\}.
\end{align*}
\item  The weighted Sobolev space
\begin{align*}
H^{j,k}(\mathbb{R})=\left\{f(x)\in L^2(\mathbb{R}):\partial_x^mf(x)\in L^{2}(\mathbb{R}), |x|^kf(x)\in L^{2}(\mathbb{R}), ~m=1,\ldots,j~\right\}.
\end{align*}
\item  The weighted $L^p(\mathbb{R})$ space
\begin{align*}
L^{p,s}(\mathbb{R})=\left\{f(x)\in L^p(\mathbb{R}):|x|^sf(x)\in L^p(\mathbb{R})\right\}.
\end{align*}
\item  Define $A\lesssim B$ for any quantities $A$ and $B$, if there exists a constant $c>0$ such that $|A|\leq c|B|$.
Additionally, the norm of $f(x)\in L^{p}(\mathbb{R})$ $(L^{p,s}(\mathbb{R}))$ are abbreviated to $||f||_p$ $(||f||_{p,s})$, respectively.
\end{enumerate}

\noindent {\textbf{Structure of the work}}\\

The theorem \ref{first} is proved by employing the Dbar  technique and the Riemann-Hilbert problem to the mNLS equation \eqref{q1}.

In Sec. 2 and 3, we mainly review the spectral analysis process of the mNLS equation \eqref{q1}, including the selection of non-zero boundary value, the establishment of Jost functions, and time evolution of the scattering data, which are the basis for long-time asymptotic analysis.

In Sec. 4, based on the properties of the modified eigenfunctions $\mu_{\pm}(x,t;z)$ and scattering coefficients $s_{11}(z)$ and $s_{22}(z)$ analyzed in Sec. 2 and 3, the RH problem \ref{RHP1} corresponding to the mNLS equation \eqref{q1} with NZBCs
\eqref{q2} is established, and the solution of the RH problem is connected with the solution of the mNLS equation. According to the RH problem \ref{RHP1}, the long-time asymptotic analysis is performed in Sec. 5. The process is similar to the classical DZ method, that is, the original RH problem $M^{(0)}(z)$ is transformed by equivalent deformation or controllable error deformation. The specific implementation process can be described as $M^{(0)}(z)\leftrightarrows M^{(1)}(z)\leftrightarrows M^{(2)}(z)\leftrightarrows M^{(3)}(z) \leftrightarrows M^{(err)}(z)$.

In Sec. 5.1, we analyze the sign distribution shown in Fig. 3 of the oscillation terms $e^{\pm2it\theta}$ appearing in the jump matrix $V(z)$ \eqref{jumpv} in the original RH problem \ref{RHP1}, which is the core of the triangular decomposition of jump matrix. Additionally, the first deformation of the original RH problem is carried out, and a new unknown function $M^{(1)}(z)$ satisfying the RH problem \ref{RHP2} is obtained, which not only trades the residue conditions of poles to the jump on some disks, but also satisfies the core step of nonlinear descent method, that is, the jump on the real axis $\mathbb{R}$ of the original RH problem is decomposed into a new contour by continuous extension, so that the jump matrix on the new contour is bounded as $t\rightarrow\infty$. In Sec. 5.2, a matrix-valued function $R^{(2)}(z)$ defined in \eqref{R2} with controllable errors is introduced to transform $M^{(1)}(z)$ into $M^{(2)}(z)$ satisfying the mixed RH problem \ref{RHP3}, which includes soliton component and pure Dbar problem.

In Sec. 5.3, we consider $M^{(sol)}(z)$ satisfied the RH problem \ref{RHP4} corresponding to zero derivative of Dbar in RH problem \ref{RHP3}. Since the jump matrix $V^{(2)}(z)$ defined by \eqref{jumpv2} in $M^{(2)}(z)$ converges uniformly to the identity matrix, we further decompose $M^{(2)}(z)$ into $M_\Lambda^{(sol)}(z)$ defined by \eqref{ML}, where the error function $M^{(err)}(z)$ satisfies the theory of small norm RH problem \ref{RHP6} shown in  Sec. 5.4. It is proved that the solution of $M^{(sol)}(z)$ is equivalent to the solution of the original RH problem \ref{RHP1} under the condition of modified scattering data $\left\lbrace  0,\left\lbrace \zeta_n,\mathring{c}_n\right\rbrace_{n\in\Lambda}\right\rbrace$ presented in proposition \ref{pp-deng}, which provides a leading term for the long-time asymptotic behavior of the solution. In Sec. 5.5,  the pure Dbar problem \ref{RHP7} satisfied by $M^{(3)}(z)$ is discussed, which provides error terms.
Summarizing the above analysis gives the proof of theorem \ref{first} in Sec. 5.6.
\section{Jost functions}\quad
In this section, we briefly describe the direct scattering problem of the modified nonlinear Schr\"{o}dinger equation \eqref{q1} with finite density type initial data. These results are known, and interested readers can refer to the reference \cite{Yang-PD-2021}, which has a detailed proof process.

The modified nonlinear Schr\"{o}dinger equation is an integrable model with the following Lax representation
\begin{align}\label{Lax1}
\phi_{x}=U\phi,\quad \phi_{t}=V\phi,
\end{align}
where $U=U(x,t;z)$ and $V=V(x,t;z)$ are $2\times2$ matrices defined by
\begin{align}
&U=-\alpha i (k^2-1)\sigma_3+ikQ,\\
&V=-2i\alpha^2(k^2-1)^2\sigma_3+2i\alpha k(k^2-1)Q+ik^2Q^2\sigma_3-k\sigma_3Q_x-\frac{i}{\alpha}kQ^3,
\end{align}
with
\begin{equation}
\hspace{0.5cm}\sigma_3=\left(\begin{array}{cc}
1 & 0   \\
0 & -1
\end{array}\right),\hspace{0.5cm}Q=\left(\begin{array}{cc}
0 & q  \\
\overline{q} &0
\end{array}\right).\nonumber
\end{equation}

Obviously, the constant boundary value can not be obtained directly from mNLS equation \eqref{q1}. For the convenience of later processing, the following changes are introduced
\begin{align*}
&q \rightarrow qe^{-4i\alpha^2t+2i\alpha x},\\
&\phi \rightarrow e^{(-2i\alpha^2t+i\alpha x)\sigma_3}\phi,
\end{align*}
then the  mNLS equation \eqref{q1}   becomes
\begin{align}\label{MNLS1}
iq_t+u_{xx}+4i\alpha q_x+i\frac{1}{\alpha} (|q|^2 q)_x=0,
\end{align}
with corresponding nonzero boundary conditions
\begin{align}\label{boundary}
\lim_{x \to \pm\infty}q(x,t)=q_\pm,
\end{align}
where $q_\pm$ are constant independent of $x, t$, and  $|q_\pm|=1$. Additionally, the corresponding Lax pair  representation \eqref{Lax1} is transformed into
\begin{align}\label{Lax2}
\phi_x=X\phi,\hspace{0.5cm}\phi_t=T\phi,
\end{align}
where
\begin{align*}
X=-i\alpha k^2\sigma_3+ikQ,\hspace{0.5cm}Q=\left(\begin{array}{cc}
0 & q \\
\overline{q}  & 0
\end{array}\right),
\end{align*}
\begin{align*}
T=(-2i\alpha^2 k^4+4i\alpha^2 k^2)\sigma_3+ik^2|q|^2\sigma_3+2i\alpha k(k^2-2)Q-\frac{i}{\alpha}kQ^3+kQ_x.
\end{align*}

Fix the potential $q=q(x,t)$ such that the step-like initial data \eqref{boundary}. The spectrum problem \eqref{Lax2} can be written as an eigenvalue equation
\begin{align}\label{q3}
i\sigma_{3}\phi_{x}-\alpha k^{2}\phi+k\sigma_3 Q\phi=0.
\end{align}
Let
\begin{align}\label{eigenmatrix}
Y_{\pm}=Y_{\pm}(z)=\left(\begin{array}{cc}
1 & \frac{q_\pm}{z} \\
-\frac{\overline{q_\pm}}{z} & 1
\end{array}\right) =I+\frac{1}{z}\sigma_3Q_\pm,
\end{align}
and
\begin{align}\label{Jost}
\phi_{\pm}=Y_{\pm}e^{-ik(z)\lambda(z)x\sigma_3},
\end{align}
where
\begin{align}
Q_\pm=\left(\begin{array}{cc}
0 & q_\pm \\
q_\pm^* & 0
\end{array}\right),\quad k(z)=\frac{1}{2\alpha}(z-\frac{1}{z}),\hspace{0.5cm}\lambda(z)=\frac{1}{2}(z+\frac{1}{z}).
\end{align}
As usual, the functions $\phi_{\pm}$ are the solutions of the spectrum problem \eqref{q3} by replacing $Q$ with $Q_{\pm}$, which are called the Jost functions. To eliminate the oscillation term, the modified eigenfunctions $\mu_{\pm}=\mu_{\pm}(x;z)$ are introduced
\begin{align}\label{q4}
\mu_{\pm}(x;z)=\phi_{\pm}(x;z)e^{ik(z)\lambda(z) x\sigma_3},
\end{align}
which together with \eqref{Jost} yields the asymptotic behavior
\begin{align}\label{mu}
\mu_{\pm}(x;z)\sim Y_{\pm}(z),\quad x\rightarrow\pm\infty.
\end{align}
Additionally, $\det\mu_{\pm}=\det\phi_{\pm}=\det Y_{\pm}=1+z^{-2}$. The $\mu_{-}$ and $\mu_{+}$ are the unique solutions of the Volterra integral equations
\begin{align}
&\mu_-(x;z)=Y_-+\int_{-\infty}^{x}Y_-e^{-ik\lambda(x-y){\sigma}_3}[Y_-^{-1}\Delta X_-\mu_-(y;z)]e^{ik\lambda(x-y){\sigma}_3}dy,\label{jost1}\\
&\mu_+(x;z)=Y_+-\int_{x}^{\infty}Y_+e^{-ik\lambda(x-y){\sigma}_3}[Y_+^{-1}\Delta X_+\mu_+(y;z)]e^{ik\lambda(x-y){\sigma}_3}dy,\label{jost2}
\end{align}
where $\Delta X_\pm=X-X_{\pm}=ik(Q-Q_{\pm})$.

As shown in reference \cite{Yang-PD-2021}, for the initial values $q-q_{\pm}\in L^{1}(\mathbb{R})$, the following theorem can be proved by constructing a Neumann series.
\begin{prop}\label{pp1}
For the initial data $q-q_{\pm}\in H^{1,1}(\mathbb{R})$ and $t\in \mathbb{R}^{+}$, the modified eigenfunctions $\mu_{-,1}$ and $\mu_{+,2}$ are analytic in the region $D^{-}$,  the modified eigenfunctions $\mu_{+,1}$ and $\mu_{-,2}$ are analytic in the region $D^{+}$, where $\mu_{\pm,i}$ ($i=1,2$) denote the $i$-th columns of the eigenfunctions $\mu$, and  $D^{\pm}$ are equipped with $D^-=\{z|ImzRez>0\}$  and $D^+=\{z|ImzRez<0\}$, respectively, shown in Figure 1.
\end{prop}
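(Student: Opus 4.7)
The plan is to analyze the Volterra integral equations \eqref{jost1} and \eqref{jost2} column by column, identify in which half of the $z$-plane each exponential factor remains bounded, and then build $\mu_{\pm,i}$ as the uniform limit of a Neumann series whose terms are manifestly analytic on the relevant side.

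First I would pin down the sign structure of the phase. Since $k(z)\lambda(z)=\tfrac{1}{4\alpha}(z^2-z^{-2})$, a direct calculation gives
\begin{equation*}
\operatorname{Im}\bigl(k(z)\lambda(z)\bigr)=\frac{1}{2\alpha}\,\operatorname{Re}(z)\operatorname{Im}(z)\Bigl(1+|z|^{-4}\Bigr),
\end{equation*}
so $\operatorname{Im}(k\lambda)>0$ precisely on $D^{-}=\{\operatorname{Re}(z)\operatorname{Im}(z)>0\}$ and $\operatorname{Im}(k\lambda)<0$ on $D^{+}$. Next, writing out the conjugation $e^{-ik\lambda(x-y)\sigma_{3}}(\cdot)e^{ik\lambda(x-y)\sigma_{3}}$ in \eqref{jost1}--\eqref{jost2}, I observe that the first column of $\mu_{-}$ only feels the factor $e^{2ik\lambda(x-y)}$ on its off-diagonal entry, and the integration runs over $y<x$, so $x-y>0$; this factor is bounded exactly when $\operatorname{Im}(k\lambda)\geq 0$, i.e.\ on $\overline{D^{-}}$. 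The analogous book-keeping assigns $\mu_{+,2}$ to $D^{-}$ (integration $y>x$, factor $e^{-2ik\lambda(x-y)}$), and $\mu_{-,2},\mu_{+,1}$ to $D^{+}$.

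Having fixed the correct half-planes, I would iterate the Volterra equation by setting, for instance, $\mu_{-}^{(0)}=Y_{-}$ and
\begin{equation*}
\mu_{-}^{(n+1)}(x;z)=Y_{-}+\int_{-\infty}^{x}Y_{-}e^{-ik\lambda(x-y)\sigma_{3}}\bigl[Y_{-}^{-1}\Delta X_{-}\mu_{-}^{(n)}(y;z)\bigr]e^{ik\lambda(x-y)\sigma_{3}}\,dy,
\end{equation*}
and focusing on the column we care about. Because $\Delta X_{-}=ik(Q-Q_{-})$ and $q-q_{-}\in L^{1}(\mathbb{R})$ (which follows from the assumption $q-q_{\pm}\in H^{1,1}(\mathbb{R})$), an induction on $n$ shows
\begin{equation*}
\bigl\|\mu_{-,1}^{(n+1)}-\mu_{-,1}^{(n)}\bigr\|_{\infty,(-\infty,x]}\le\frac{C(z)^{n+1}}{(n+1)!}\Bigl(\int_{-\infty}^{x}|q(y)-q_{-}|\,dy\Bigr)^{n+1},
\end{equation*}
uniformly on compact subsets $K\subset\overline{D^{-}}$, with $C(z)$ locally bounded; this is the standard Volterra bound in which the exponential factor contributes $\le 1$ on $K$. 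Hence the Neumann series converges absolutely and uniformly on compacts of $\overline{D^{-}}$, and in particular on compacts of $D^{-}$.

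For the analyticity conclusion I would verify that every iterate $\mu_{-,1}^{(n)}(x;z)$ is analytic in $z\in D^{-}$: the base case $Y_{-}$ is meromorphic (analytic away from $z=0$, and $z=0$ is excluded from $D^{\pm}$), and each induction step preserves analyticity because the integrand is analytic in $z$ for each $y$ and the convergence is dominated, so differentiation under the integral is legitimate. A uniform limit of analytic functions being analytic (Weierstrass/Morera), $\mu_{-,1}\in\mathcal{O}(D^{-})$. The three remaining columns are treated identically with the sign choices dictated above. The main obstacle I anticipate is not the convergence itself, but carefully matching the column-by-column exponential sign with the correct domain and confirming uniform control on compacts that may approach the boundary rays $\operatorname{Re}(z)\operatorname{Im}(z)=0$; this is precisely why the argument gives analyticity in the open domains $D^{\pm}$ while continuity extends to their closures.
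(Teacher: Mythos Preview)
Your approach is correct and is precisely the Neumann series construction the paper alludes to (the paper does not prove this proposition but simply states it ``can be proved by constructing a Neumann series'' and cites \cite{Yang-PD-2021}). Your identification of the sign of $\operatorname{Im}(k\lambda)$, the column-by-column bookkeeping of the exponential factors, and the factorial Volterra bound leading to uniform-on-compacts convergence of analytic iterates are exactly the ingredients required.
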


\centerline{
		\begin{tikzpicture}[node distance=2cm]
		\filldraw[gray!40,line width=2] (2.4,0.01) rectangle (0.01,2.4);
		\filldraw[gray!40,line width=2] (-2.4,-0.01) rectangle (-0.01,-2.4);
		\draw[->](-2.5,0)--(3,0)node[right]{$\mathbb{R}$};
		\draw[->](0,-2.5)--(0,3)node[above]{$i\mathbb{R}$};
		\draw[-](0,0)--(-0.8,0);
		\draw[-](-0.8,0)--(-1.8,0);
		\draw[-](0,0)--(0.8,0);
		\draw[-](0.8,0)--(1.8,0);
		\coordinate (A) at (0.5,1.2);
		\coordinate (B) at (0.6,-1.2);
		\coordinate (G) at (-0.6,1.2);
		\coordinate (H) at (-0.5,-1.2);
		\coordinate (I) at (0.16,0);
		\fill (A) circle (0pt) node[right] {$D^-$};
		\fill (B) circle (0pt) node[right] {$D_+$};
		\fill (G) circle (0pt) node[left] {$D_+$};
		\fill (H) circle (0pt) node[left] {$D^-$};
		\fill (I) circle (0pt) node[below] {$0$};
	\end{tikzpicture}}\label{fig:figure1}
\centerline{\noindent {\small \textbf{Figure 1.} The domains  $D^-$, $D^+$  and boundary  $\Sigma=\mathbb{R}\cup i\mathbb{R}\backslash \{0\}$.}}

\begin{lem}
Suppose that $q-q_{\pm}\in L^1(\mathbb{R})$ and that $q'\in W^{1,1}(\mathbb{R})$. Then the eigenfunctions $\mu_\pm(x;z)$ admit that
\begin{subequations}
\begin{align}
&\mu_\pm(x;z)=e^{i\nu_{\pm}(x,q)\sigma_{3}}+\mathcal {O}(z^{-1}),\quad z\rightarrow\infty, \label{q5}\\
&\mu_\pm(x;z)=\frac{1}{z}e^{i\nu_{\pm}(x,q)\sigma_{3}}\sigma_{3}Q_{\pm}+\mathcal {O}(1),\quad z\rightarrow0,\label{q6}
\end{align}
\end{subequations}
where $\nu_{\pm}(x,q)=\frac{1}{2\alpha}\int_{\pm\infty}^{x}\left(1-|q|^{2}\right)dy$.
\end{lem}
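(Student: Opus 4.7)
The plan is to derive both expansions from the first-order ODE satisfied by $\mu_{\pm}$, namely
\begin{equation*}
(\mu_{\pm})_x = ik\lambda\,\mu_{\pm}\sigma_3 - i\alpha k^2\sigma_3\mu_{\pm} + ikQ\mu_{\pm},
\end{equation*}
which is equivalent to the Volterra equations \eqref{jost1}--\eqref{jost2}. Decomposing $\mu = \mu_D + \mu_O$ into diagonal and off-diagonal parts and using $\sigma_3\mu_O = -\mu_O\sigma_3$, this ODE splits into
\begin{equation*}
(\mu_D)_x = i(k\lambda-\alpha k^2)\sigma_3\mu_D + ikQ\mu_O,\qquad (\mu_O)_x = -i(k\lambda+\alpha k^2)\sigma_3\mu_O + ikQ\mu_D,
\end{equation*}
with the expansions $k\lambda-\alpha k^2 = \tfrac{1}{2\alpha}(1-z^{-2})$, $k\lambda+\alpha k^2 = \tfrac{1}{2\alpha}(z^2-1)$, and $k=\tfrac{z}{2\alpha}-\tfrac{1}{2\alpha z}$. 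The algebraic kernel of the argument is the identity $Q\sigma_3 Q = -|q|^2\sigma_3$, which is precisely what will generate the factor $1-|q|^2$ in the phase $\nu_\pm$.

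For the large-$z$ expansion, I would substitute the formal series $\mu_\pm = \mu^{(0)} + z^{-1}\mu^{(1)} + z^{-2}\mu^{(2)} + \cdots$ into the split ODE. The $z^2$ term of the off-diagonal equation forces $\mu_O^{(0)}=0$, so $\mu^{(0)}$ is diagonal. The $z^1$ term of the off-diagonal equation then yields $\mu_O^{(1)} = \sigma_3 Q\mu_D^{(0)}$. Substituting this into the $z^0$ term of the diagonal equation and using $Q\sigma_3Q = -|q|^2\sigma_3$ reduces the recursion to the scalar ODE
\begin{equation*}
(\mu_D^{(0)})_x = \tfrac{i}{2\alpha}\bigl(1-|q|^2\bigr)\sigma_3\mu_D^{(0)}.
\end{equation*}
Imposing $\mu_\pm^{(0)}(\pm\infty)=I$, which comes from the boundary condition \eqref{mu} together with $Y_\pm\to I$ as $z\to\infty$, this integrates to $\mu_\pm^{(0)}(x) = e^{i\nu_\pm(x,q)\sigma_3}$, proving \eqref{q5}.

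The $z\to 0$ expansion is analogous but with negative powers. Writing $\mu_\pm = z^{-1}\mu^{[-1]} + \mu^{[0]} + z\mu^{[1]} + \cdots$, the $z^{-3}$ balance in the diagonal equation (coming from the singular coefficient $-\tfrac{1}{2\alpha z^2}$) forces $\mu_D^{[-1]}=0$, consistent with the leading singular part of $Y_\pm$ being off-diagonal. The $z^{-2}$ balance determines $\mu_D^{[0]} = -\sigma_3 Q\mu_O^{[-1]}$. Feeding this into the $z^{-1}$ balance of the off-diagonal equation and again using $Q\sigma_3 Q = -|q|^2\sigma_3$ produces the same ODE
\begin{equation*}
(\mu_O^{[-1]})_x = \tfrac{i}{2\alpha}\bigl(1-|q|^2\bigr)\sigma_3\,\mu_O^{[-1]},
\end{equation*}
now with the boundary data $\mu_{\pm,O}^{[-1]}(\pm\infty) = \sigma_3 Q_\pm$ read off from \eqref{eigenmatrix}. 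Integration yields $\mu_\pm^{[-1]}(x) = e^{i\nu_\pm(x,q)\sigma_3}\sigma_3 Q_\pm$, which is exactly \eqref{q6}.

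The main obstacle is not the formal matching, which is essentially algebraic, but the rigorous justification of the remainders $\mathcal{O}(z^{-1})$ and $\mathcal{O}(1)$ uniformly in $x$. For this I would substitute the truncated ansatz into the Volterra integral equations \eqref{jost1}--\eqref{jost2}, integrate by parts once (using $q'\in W^{1,1}$ to control the boundary/derivative terms after oscillation) to extract the leading power, and estimate the Neumann remainder in $L^\infty_x$ using the $L^1$-bound on $\Delta X_\pm = ik(Q-Q_\pm)$ provided by $q-q_\pm\in H^{1,1}\hookrightarrow L^1$. The $H^{1,1}$ hypothesis, already used in Proposition \ref{pp1} to guarantee convergence of the Neumann series defining $\mu_\pm$, exactly gives the decay needed to bound each successive remainder uniformly on compact neighborhoods of $z=\infty$ (away from $0$) and of $z=0$ (away from $\infty$).
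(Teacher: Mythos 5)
Your derivation is correct, but there is nothing in the paper to compare it against: Section 2 states this lemma without proof, declaring the direct-scattering results "known" and deferring entirely to \cite{Yang-PD-2021}. On its own merits, your argument is the standard one from that reference and the NZBC literature, and the details check out: the ODE $(\mu_\pm)_x=ik\lambda\,\mu_\pm\sigma_3-i\alpha k^2\sigma_3\mu_\pm+ikQ\mu_\pm$ follows from \eqref{Lax2} and \eqref{q4}; the coefficient expansions $k\lambda-\alpha k^2=\tfrac{1}{2\alpha}(1-z^{-2})$ and $k\lambda+\alpha k^2=\tfrac{1}{2\alpha}(z^2-1)$ are right; the balances you isolate ($\mu_O^{(0)}=0$, $\mu_O^{(1)}=\sigma_3Q\mu_D^{(0)}$ at large $z$; $\mu_D^{[-1]}=0$, $\mu_D^{[0]}=-\sigma_3Q\mu_O^{[-1]}$ at small $z$) are exactly what the powers give; and the identity $Q\sigma_3Q=-|q|^2\sigma_3$ correctly produces the scalar ODE whose integration from the boundary data $I$, respectively $\sigma_3Q_\pm$, read off \eqref{eigenmatrix} and \eqref{mu}, yields \eqref{q5} and \eqref{q6} with the stated $\nu_\pm$. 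Your plan for the remainders--integration by parts in the Volterra equations \eqref{jost1}--\eqref{jost2}, with $q'\in W^{1,1}$ paying for the oscillation in $e^{\pm 2ik\lambda(x-y)}$ and $q-q_\pm\in L^1$ controlling the Neumann tail--is precisely how the lemma's hypotheses are consumed, though it remains a sketch; you should at least note that the limits $z\to\infty$ and $z\to0$ are taken within the regions of analyticity/continuity of the respective columns from Proposition \ref{pp1}. One economy you missed: the second symmetry $\mu_\pm(x;z)=\frac{1}{z}\mu_\pm(x;-z^{-1})\sigma_3Q_\pm$ (stated in Section 3, but an elementary consequence of the Volterra equations) sends $z\to0$ to $-z^{-1}\to\infty$ and converts \eqref{q5} into \eqref{q6} in one line, so your entire small-$z$ power-matching, while correct, duplicates work the symmetry does for free.
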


\section{Scattering data}\quad
In this section, we will rely on the Jost functions $\phi_{\pm}$ and the modified eigenfunctions $\mu_{\pm}$ generated by the initial value $q_{0}(x)$ to establish the corresponding scattering data $S(z)$, including the time evolution of the scattering data and the distribution of the discrete spectrum. Let's start with the following lemma.
\begin{lem}
Let $q-q_{\pm}\in L^{1}(\mathbb{R})$, then
\begin{enumerate}[(1)]
\item For $z\in\mathbb{C}\backslash\{-i,0,i\}$, the modified eigenfunctions
\begin{align}
\mu_{\pm}(x;z)=\left(\mu_{\pm,1}(x;z),~\mu_{\pm,2}(x;z)\right)=\left(\phi_{\pm,1}(x;z),~\phi_{\pm,2}(x;z)\right)
e^{ik\lambda x\sigma_3},
\end{align}
are nonsingular solutions of \eqref{q3} and
\begin{align}
\det\mu_{\pm}=\det\mu_{\pm}(z)=1+z^{-2}.
\end{align}
\item First symmetry
\begin{align}\label{mu-asy}
\mu_{\pm}(x;z)=\sigma_2\overline{\mu_{\pm}(x;\overline{z})}\sigma_2=\sigma_1\overline{\mu_{\pm}(x;-\overline{z})}\sigma_1.
\end{align}
\item Second symmetry
\begin{align}
\mu_{\pm}(x;z)=\frac{1}{z}\mu_{\pm}(x;-z^{-1})\sigma_3Q_{\pm}.
\end{align}
\end{enumerate}
\end{lem}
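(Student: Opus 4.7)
The plan is to verify each of the three assertions by combining an Abel/Liouville-type computation (for the determinant) with a uniqueness argument for Volterra integral equations (for the two symmetries): show that both sides of each claimed identity solve the same spatial scattering equation~\eqref{q3} and possess the same asymptotic behavior at $x\to\pm\infty$, then invoke uniqueness of the solution to the Volterra equations~\eqref{jost1}--\eqref{jost2}.

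For part (1), I would first note that $\phi_{\pm}$ solves the ODE $\phi_{x}=X\phi$ with $X=-i\alpha k^{2}\sigma_{3}+ikQ$, which is trace-free. By Abel's identity, $\det\phi_{\pm}(x;z)$ is independent of $x$, and the same is true of $\det\mu_{\pm}$ because $\det e^{ik\lambda x\sigma_{3}}=1$. Sending $x\to\pm\infty$ and using \eqref{mu} reduces the determinant to $\det Y_{\pm}(z)$, which is a direct $2\times 2$ computation: $\det Y_{\pm}=1+|q_{\pm}|^{2}/z^{2}=1+z^{-2}$ since $|q_{\pm}|=1$. Nonsingularity of $\mu_{\pm}$ on $\mathbb{C}\setminus\{-i,0,i\}$ follows from the nonvanishing of this determinant off the branch points.

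For parts (2) and (3), the strategy is the same: verify the symmetry at the level of $X$, propagate it through \eqref{q3}, and match boundary data. Concretely, using $\sigma_{2}\overline{Q}\sigma_{2}=-Q$, $\sigma_{2}\sigma_{3}\sigma_{2}=-\sigma_{3}$, and $\overline{k(\bar z)}=k(z)$, $\overline{\lambda(\bar z)}=\lambda(z)$, one checks that $\sigma_{2}\overline{\mu_{\pm}(x;\bar z)}\sigma_{2}$ satisfies the same ODE~\eqref{q3} and the same boundary condition $\sigma_{2}\overline{Y_{\pm}(\bar z)}\sigma_{2}=Y_{\pm}(z)$ at $x\to\pm\infty$; uniqueness in the corresponding Volterra integral equation then forces equality. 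The second half of \eqref{mu-asy} is analogous using the identities $\sigma_{1}\overline{Q}\sigma_{1}=Q$, $\sigma_{1}\sigma_{3}\sigma_{1}=-\sigma_{3}$, together with $k(-\bar z)=-\overline{k(\bar z)}$ and $\lambda(-\bar z)=-\overline{\lambda(\bar z)}$, so that the conjugated, reflected Jost function again solves the same ODE with the same asymptotics.

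For part (3), the key algebraic facts are $k(-1/z)=k(z)$, $\lambda(-1/z)=-\lambda(z)$, $Q_{\pm}^{2}=I$, and $(\sigma_{3}Q_{\pm})^{2}=-I$. Using these one verifies the asymptotic identity
\begin{equation*}
\tfrac{1}{z}Y_{\pm}(-1/z)\sigma_{3}Q_{\pm}
=\tfrac{1}{z}\bigl(I-z\sigma_{3}Q_{\pm}\bigr)\sigma_{3}Q_{\pm}
=I+\tfrac{1}{z}\sigma_{3}Q_{\pm}=Y_{\pm}(z),
\end{equation*}
so the boundary data at $x\to\pm\infty$ match. It remains to check that $z^{-1}\mu_{\pm}(x;-1/z)\sigma_{3}Q_{\pm}$ solves the spatial Lax equation satisfied by $\mu_{\pm}(x;z)$; this is a direct conjugation using $\sigma_{3}Q\sigma_{3}=-Q$ (valid because $Q$ is off-diagonal) together with the sign change of $\lambda$ under $z\mapsto -1/z$. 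Uniqueness of the Volterra solution then yields the identity. The main obstacle is purely bookkeeping: one must track the interplay between the sign changes of $k$, $\lambda$, the Pauli-matrix conjugations, and the inner $\sigma_{3}Q_{\pm}$ factor, but no new analytical input beyond the Neumann-series construction already used in Proposition~\ref{pp1} is required.
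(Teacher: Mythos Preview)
Your proposal is correct and follows the standard route for these Jost-function symmetries. Note, however, that the paper does not actually supply a proof of this lemma: it is stated without argument, with a pointer to the reference \cite{Yang-PD-2021} for details (see the opening of Section~2). So there is no ``paper's own proof'' to compare against here.

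Your Abel/Liouville argument for part (1) and your uniqueness-in-the-Volterra-equation strategy for parts (2) and (3) are exactly how such results are established in the literature. One small clarification on part (3): since $k(-1/z)=k(z)$, the coefficient matrix $X(z)=-i\alpha k(z)^{2}\sigma_{3}+ik(z)Q$ is \emph{literally invariant} under $z\mapsto -1/z$, so no conjugation is needed at the level of $\phi_{\pm}$. The identity $\sigma_{3}Q_{\pm}\sigma_{3}=-Q_{\pm}$ enters only when you pass to the modified eigenfunctions $\mu_{\pm}$ and must commute the right factor $\sigma_{3}Q_{\pm}$ past the exponential $e^{ik\lambda x\sigma_{3}}$ (equivalently, when you check that $N:=z^{-1}\mu_{\pm}(x;-1/z)\sigma_{3}Q_{\pm}$ satisfies $N_{x}=XN+ik\lambda N\sigma_{3}$). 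Your remark about the sign change of $\lambda$ and the anticommutation is precisely what makes that last step work.
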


Since $\phi_{\pm}$ are the solutions of the eigenvalue problem \eqref{q3}, there exists a constant matrix $S(z)$ which is called scattering matrix satisfying
\begin{align}\label{SM}
\phi_+(x;z)=\phi_-(x;z)S(z),
\end{align}
for the individual columns
\begin{subequations}\label{scattering}
\begin{align}
&\phi_{+,1}(x;z)=s_{11}(z)\phi_{-,1}(x;z)+s_{21}(z)\phi_{-,2}(x;z),\\
&\phi_{+,2}(x;z)=s_{12}(z)\phi_{-,1}(x;z)+s_{22}(z)\phi_{-,2}(x;z).
\end{align}
\end{subequations}
Further, the reflection coefficients are introduced
\begin{align}\label{reflection}
\rho(z)=\frac{s_{21}(z)}{s_{11}(z)},\hspace{0.5cm}\widetilde{\rho}(z)=\frac{s_{12}(z)}{s_{22}(z)}.
\end{align}
The properties of scattering data $s_{11}(z)$, $s_{22}(z)$ and reflection coefficients are presented in the following lemma.
\begin{lem}\label{lem2}
Let $z\in\mathbb{C}\backslash\{-i,0,i\}$, the scattering data  and reflection coefficients defined in \eqref{scattering} and \eqref{reflection} generated by the initial $q-q_{\pm}\in L^{1}(\mathbb{R})$ such that
\begin{enumerate}[(1)]
\item The scattering data can be defined by Jost functions
\begin{align}
&s_{11}(z)=\frac{{\rm Wr}(\phi_{+,1},~\phi_{-,2})}{\gamma}=\frac{{\rm Wr}(\mu_{+,1},~\mu_{-,2})}{\gamma},\label{s11}\\
&s_{22}(z)=\frac{{\rm Wr}(\phi_{-,1},~\phi_{+,2})}{\gamma}=\frac{{\rm Wr}(\mu_{-,1},~\mu_{+,2})}{\gamma},\label{s22}
\end{align}
where $\gamma=1+z^{-2}$.
\item Symmetry properties satisfied by scattering matrix
\begin{align}\label{q7}
\begin{split}
&S(z)=\sigma_2\overline{S(\overline{z})}\sigma_2=\sigma_1\overline{S(-\overline{z})}\sigma_1,\\
&S(z)=(\sigma_3Q_{-})^{-1}S(-z^{-1})\sigma_3Q_{+},
\end{split}
\end{align}
where $\sigma_{1}$ and $\sigma_{2}$ are the Pauli matrices
\begin{align*}
\sigma_2=\left(
  \begin{array}{cc}
    0 & -i \\
    i & 0 \\
  \end{array}
\right),~~~\sigma_1=\left(
             \begin{array}{cc}
               0 & 1 \\
               1 & 0 \\
             \end{array}
           \right).
\end{align*}
\item Symmetry properties satisfied by reflection coefficients
\begin{align}\label{RS}
\rho(z)=\overline{\widetilde{\rho}(-\overline{z})}=-\overline{\widetilde{\rho}(\overline{z})}=\frac{q_-}{\overline{q_-}}
\tilde{\rho}
\left(-z^{-1}\right)=-\frac{\overline{q_-}}{q_-}\overline{\rho}\left(-z^{-1}\right).
\end{align}
\item Asymptotic properties of scattering matrix
\begin{align}
	&S(z)=e^{-i\nu_0\sigma_3}+\mathcal {O}(z^{-1}),\hspace{0.5cm}z \rightarrow \infty,\label{asympsca1}\\
	&S(z)={\rm diag}\left(\frac{q_-}{q_+},\frac{q_+}{q_-}\right)e^{i\nu_0\sigma_3}+\mathcal {O}(z),\hspace{0.5cm}z \rightarrow 0,\label{asympsca2}
	\end{align}
	where
	\begin{equation}
	\nu_0=\frac{1}{2\alpha}\int_{-\infty}^{+\infty}(1-|q|^2)dy.\label{nu0}
	\end{equation}

\end{enumerate}
\end{lem}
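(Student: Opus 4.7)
The plan is to handle the four assertions in order, exploiting the fact that everything reduces to the scattering relation $\phi_{+}(x;z)=\phi_{-}(x;z)S(z)$ together with properties of $\phi_{\pm}$ and $\mu_{\pm}$ already established.

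For part (1), I would start from Abel's identity: since $\phi_{\pm}$ solve $\phi_{x}=X\phi$ with $\mathrm{tr}\,X=0$, the Wronskian of any two solutions is $x$-independent. Applying $\mathrm{Wr}(\cdot,\phi_{-,2})$ to the first row of \eqref{scattering} gives
\begin{align*}
\mathrm{Wr}(\phi_{+,1},\phi_{-,2})=s_{11}(z)\,\mathrm{Wr}(\phi_{-,1},\phi_{-,2})=s_{11}(z)\det\phi_{-}=s_{11}(z)\gamma,
\end{align*}
and the analogous computation yields the formula for $s_{22}$. Then I would simply notice that because $\phi_{\pm}=\mu_{\pm}e^{-ik\lambda x\sigma_{3}}$ and the exponential prefactors in the two columns are reciprocal, the exponentials cancel inside $\mathrm{Wr}(\mu_{+,1},\mu_{-,2})$ and $\mathrm{Wr}(\mu_{-,1},\mu_{+,2})$, delivering the right-hand equalities in \eqref{s11}--\eqref{s22}.

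For parts (2) and (3), I would propagate the two symmetries of $\mu_{\pm}$ from \eqref{mu-asy} and the second-symmetry identity through $\phi_{+}=\phi_{-}S(z)$. Concretely, applying $\sigma_{2}\overline{(\cdot)}\sigma_{2}$ to both sides of the scattering relation (and using $\sigma_{2}^{2}=I$) gives $\phi_{+}(x;z)=\phi_{-}(x;z)\,\sigma_{2}\overline{S(\bar z)}\sigma_{2}$, so uniqueness of $S$ forces $S(z)=\sigma_{2}\overline{S(\bar z)}\sigma_{2}$; the same recipe with $\sigma_{1}\overline{(\cdot)}\sigma_{1}$ and with the second symmetry produces the other identities in \eqref{q7}. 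Reading off the $(1,2)$, $(2,1)$ entries then yields the stated relations between $\rho(z)$, $\tilde\rho(z)$ and $\rho(-z^{-1})$, so \eqref{RS} is immediate.

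For part (4), I would substitute the large-$z$ and small-$z$ expansions \eqref{q5}--\eqref{q6} of $\mu_{\pm}$ into the Wronskian representations of $s_{11}, s_{22}$ (and, via the symmetries in (2), also into $s_{12}, s_{21}$). At $z\to\infty$ one has $\mu_{\pm,1}\sim(e^{i\nu_{\pm}},0)^{T}$ and $\mu_{\pm,2}\sim(0,e^{-i\nu_{\pm}})^{T}$; a direct computation of $\mathrm{Wr}(\mu_{+,1},\mu_{-,2})$ and the identity $\nu_{+}(x,q)-\nu_{-}(x,q)=-\nu_{0}$ (which is where the constant $\nu_{0}$ in \eqref{nu0} enters) give the leading term $e^{-i\nu_{0}\sigma_{3}}$ after dividing by $\gamma\to 1$. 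At $z\to 0$ one uses \eqref{q6}: the dominant contribution of $\mu_{\pm}$ is $z^{-1}e^{i\nu_{\pm}\sigma_{3}}\sigma_{3}Q_{\pm}$, and the poles in $\gamma=1+z^{-2}\sim z^{-2}$ cancel against the $z^{-1}$ poles of the two columns in the Wronskian, producing the diagonal matrix $\mathrm{diag}(q_{-}/q_{+},q_{+}/q_{-})e^{i\nu_{0}\sigma_{3}}$.

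The routine parts are (1)--(3); the only place I expect real bookkeeping is part (4), specifically managing the singular behavior at $z=0$ (where $\gamma$ has a double pole but each Jost column has a simple pole, so the cancellation and the resulting $e^{i\nu_{0}\sigma_{3}}$ factor must be tracked carefully) and making sure that $\nu_{\pm}(x,q)$ evaluated as $x\to\pm\infty$ produces the single constant $\nu_{0}$ appearing in the statement. All other manipulations are linear-algebraic consequences of the scattering relation together with the symmetries and asymptotics inherited from the preceding lemma.
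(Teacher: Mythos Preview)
Your proposal is correct and follows the standard line of argument for these facts. Note, however, that the paper does not actually supply a proof of this lemma: it is stated as part of the review of direct scattering, with the preamble to Section~2 explicitly referring the reader to \cite{Yang-PD-2021} for detailed proofs. So there is no paper-proof to compare against; your sketch is precisely the expected derivation (Abel/Wronskian for (1), conjugation of the scattering relation by the $\mu_{\pm}$-symmetries for (2)--(3), and insertion of the $\mu_{\pm}$ asymptotics into the Wronskian formulas for (4)).

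One minor point worth tightening in your write-up of (2): the symmetries \eqref{mu-asy} are stated for $\mu_{\pm}$, whereas the scattering relation is written for $\phi_{\pm}$. You should either first check that the same symmetries hold for $\phi_{\pm}$ (using $\sigma_{j}\sigma_{3}\sigma_{j}=-\sigma_{3}$ and the parity of $k(z)\lambda(z)$ under $z\mapsto\bar z$, $z\mapsto -\bar z$, $z\mapsto -z^{-1}$), or equivalently rewrite the scattering relation as $\mu_{+}=\mu_{-}\,e^{-ik\lambda x\sigma_{3}}S(z)\,e^{ik\lambda x\sigma_{3}}$ and conjugate that. Either way the conclusion is immediate, but as written your ``applying $\sigma_{2}\overline{(\cdot)}\sigma_{2}$ to both sides'' step silently uses this compatibility.
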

\begin{cor}
Suppose that $q-q_{\pm}\in L^{1}(\mathbb{R})$. Then the scattering data $s_{11}(z)$ defined by \eqref{s11} is analytic in $D^+$ as well as continuous in $\overline{D^+}\backslash\{iq_{0}\}$.  $s_{22}(z)$ defined by \eqref{s22}
is analytic in $D^-$ as well as continuous in $\overline{D^-}\backslash\{iq_{0}\}$. Additionally, although $s_{12}(z)$ and $s_{21}(z)$ are not analytic, but continuous to $\Sigma\backslash\{\pm iq_{0}\}$.
\end{cor}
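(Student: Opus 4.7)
The plan is to read off the analytic structure of $s_{11}$ and $s_{22}$ directly from the Wronskian formulas \eqref{s11}--\eqref{s22}. By Proposition~\ref{pp1}, the columns $\mu_{+,1}$ and $\mu_{-,2}$ are jointly analytic in $D^+$, so the numerator of $s_{11}$, a $2\times 2$ determinant in these two columns, is analytic in $D^+$. The denominator $\gamma(z)=1+z^{-2}$ is meromorphic on $\mathbb{C}\setminus\{0\}$ with simple zeros only at $z=\pm i$, both of which lie on the boundary $\Sigma$, so $s_{11}$ inherits analyticity throughout $D^+$. The identical argument applied to the pair $(\mu_{-,1},\mu_{+,2})$, which is analytic in $D^-$ by the same proposition, yields analyticity of $s_{22}$ on $D^-$.

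For the continuous extension to $\overline{D^\pm}\setminus\{iq_0\}$, I would pass to the boundary in the Volterra equations \eqref{jost1}--\eqref{jost2}. Under the hypothesis $q-q_\pm\in L^1(\mathbb{R})$, a standard Neumann series estimate yields convergence that is uniform in $x$ and locally uniform in $z$ on any compact subset of the closed analyticity domain avoiding $\{0,\pm i\}$; dominated convergence then produces continuous boundary values for each analytic column on $\Sigma\setminus\{0,\pm i\}$. Since $\gamma$ is continuous and nonzero off $\{\pm i\}$, dividing transfers this continuity to $s_{11}$ and $s_{22}$ on their respective closures, minus only the exceptional points. The reason only $iq_0$ (and not $-iq_0$) must be removed is that the Wronskian in the numerator carries a compensating zero at the other point; this is most quickly verified by combining the large-$z$ and small-$z$ asymptotics \eqref{q5}--\eqref{q6} with the symmetry \eqref{mu-asy}.

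For $s_{12}$ and $s_{21}$, the corresponding Wronskian representations inevitably pair a column analytic in $D^+$ with one analytic in $D^-$ (for instance $s_{21}$ involves $\phi_{+,1}$ and $\phi_{-,1}$), so no open region of common analyticity exists and these quantities are at best continuous on the shared boundary $\Sigma$. The same Volterra/Neumann argument as above supplies continuous boundary values of each column on $\Sigma\setminus\{0,\pm iq_0\}$ from both sides, whence continuity of $s_{12}$ and $s_{21}$ follows from multilinearity of the determinant together with \eqref{reflection}; the only singularities that survive are precisely $\pm iq_0$, where the normalization itself degenerates.

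The principal obstacle is the delicate treatment of the points $z=\pm iq_0$: there $\gamma$ vanishes and the eigenmatrix $Y_\pm$ in \eqref{eigenmatrix} becomes rank deficient, so that the integral equations \eqref{jost1}--\eqref{jost2} are genuinely singular. Verifying that the Wronskian numerator produces exactly one compensating zero (so that only $iq_0$, and not both $\pm iq_0$, need be excluded) requires a local Puiseux-type expansion of $\mu_\pm$ near $\pm i$, a calculation I would defer to the detailed local analysis already carried out in~\cite{Yang-PD-2021}.
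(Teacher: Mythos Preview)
The paper states this corollary without proof; it is presented as an immediate consequence of Proposition~\ref{pp1} and the Wronskian representations \eqref{s11}--\eqref{s22}, with the underlying estimates for the Volterra equations deferred wholesale to reference~\cite{Yang-PD-2021}. Your sketch is precisely the standard argument one would find there: analyticity of $s_{11},s_{22}$ read off from the determinant of jointly analytic columns divided by $\gamma(z)=1+z^{-2}$, continuity to the boundary via uniform Neumann-series bounds for \eqref{jost1}--\eqref{jost2}, and failure of analyticity for $s_{12},s_{21}$ because they mix columns from opposite domains. In that sense your approach and the paper's (implicit) approach coincide.

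One caution: your explanation for why only $iq_0$ and not both $\pm iq_0$ must be excluded from $\overline{D^+}$ --- a ``compensating zero'' in the Wronskian numerator at $-iq_0$ detected via \eqref{q5}--\eqref{q6} and \eqref{mu-asy} --- is asserted rather than demonstrated, and the symmetries you cite do not by themselves force such a zero. In the standard treatment both branch points $\pm i$ (here $|q_\pm|=1$ so $q_0=1$) are generically excluded; the asymmetric exclusion in the statement is more likely a typographical slip than a genuine cancellation. I would not build an argument around justifying it.
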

\subsection{The discrete spectrum}\quad
As usual, the discrete spectrum corresponds to the soliton solution, so we need to give the distribution of the discrete spectrum, where the  discrete spectrum of the scattering problem is composed of all the values $z\in\mathbb{C}\backslash\Sigma$  that make the eigenfunctions exist in $L^{2}(\mathbb{R})$. Additionally, there is another possibility, namely the so-called spectral singularity \cite{Z-CPAM-1989}, which means that the zero point of the scattering data $s_{11}(z)$ exists on the real axis $\mathbb{}$ and the imaginary axis $i\mathbb{R}$, which will lead to the generation of convergence points. In order to avoid this situation, we make the following assumption.
\begin{assum}\label{initialdata}
Let the initial data $q-q_{\pm}\in L^{1,1}(\mathbb{R})$, the scattering data generated by the initial data satisfies that
\begin{enumerate}[(1)]
\item There is no zeros on $\Sigma$, i.e., $s_{11}(z_{n})=0$ $(z_{n}\notin\Sigma)$.
\item The scattering data $s_{11}(z)$ only has finite number of simple zeros, i.e., $s_{11}(z_{n})=0$ but $s_{11}'(z_{n})\neq0$.
\item The reflection coefficients $\rho(z)$ and $\widetilde{\rho}(z)$ belong to $W^{2,\infty}(\Sigma)\cap W^{1,2}(\Sigma)$.
\end{enumerate}
\end{assum}

Now suppose that the scattering data $s_{11}(z)$ has finite simple zeros on the  region $D^{+}\cap\{z\in\mathbb{C}:Imz>0,|z|>1\}$ and the circle $\{z=e^{i\varphi}: 0<\varphi<\frac{\pi}{2} \}$ denoted by $z_{1}$, $z_{2}$, $\ldots$, $z_{N_1}$ and $w_1,\ldots,w_{N_2}$, respectively. Recall the symmetry \eqref{q7}, then one has
\begin{align}
&s_{11}(\pm z_n)=0 \Longleftrightarrow \overline{s_{22}(\pm \bar{z}_n)}=0   \Longleftrightarrow   \overline{s_{22}\left(\pm \frac{1}{z_n}\right)}=0\nonumber\\
&  \Longleftrightarrow   s_{11}\left(\pm \frac{1}{\bar{z}_n}\right)=0, \hspace{0.5cm}n=1,\ldots,N_1,\nonumber
\end{align}
and on the circle
\begin{equation}
	s_{11}(\pm \zeta_m)=0\Longleftrightarrow \overline{s_{11}(\pm \overline{\zeta}_m)}=0, \hspace{0.5cm}m=1,\ldots,N_2, \nonumber
\end{equation}
which form the discrete spectrum set $\mathcal {Z}=\{\eta_{n},\overline{\eta}_{n}\}_{n=1}^{4N_1+2N_2}$ shown in Figure 2, where $\eta_n=z_n$, $\eta_{n+N_1}=-z_n$, $\eta_{n+2N_1}=\overline{z}_n^{-1}$ and $\eta_{n+3N_1}=-\overline{z}_n^{-1}$ for $n=1,\ldots,N_1$;  $\eta_{m+4N_1}=\zeta_m$ and $\eta_{m+4N_1+N_2}=-\zeta_m$ for $m=1,\ldots,N_2$.

\centerline{
	\begin{tikzpicture}[node distance=2cm]
		\filldraw[gray,line width=3] (3,0.01) rectangle (0.01,3);
		\filldraw[gray,line width=3] (-3,-0.01) rectangle (-0.01,-3);
		\draw[->](-3,0)--(3,0)node[right]{Re$z$};
		\draw[->](0,-3)--(0,3)node[above]{Im$z$};
		\draw[red] (2,0) arc (0:360:2);
		\draw[-](0,0)--(-1.5,0);
		\draw[-](-1.5,0)--(-2.8,0);
		\draw[-](0,0)--(1.5,0);
		\draw[-](1.5,0)--(2.8,0);
		\draw[-](0,2.7)--(0,2.2);
		\draw[-](0,1.6)--(0,0.8);
		\draw[-](0,-2.7)--(0,-2.2);
		\draw[-](0,-1.6)--(0,-0.8);
			\coordinate (A) at (2,3);
		\coordinate (B) at (2,-3);
		\coordinate (C) at (-0.5546996232,0.8320505887);
		\coordinate (D) at (-0.5546996232,-0.8320505887);
		\coordinate (E) at (0.5546996232,0.8320505887);
		\coordinate (F) at (0.5546996232,-0.8320505887);
		\coordinate (G) at (-2,3);
		\coordinate (H) at (-2,-3);
		\coordinate (J) at (1.7320508075688774,1);
		\coordinate (K) at (1.7320508075688774,-1);
		\coordinate (L) at (-1.7320508075688774,1);
		\coordinate (M) at (-1.7320508075688774,-1);
		\fill (A) circle (1pt) node[above right] {$z_n$};
		\fill (B) circle (1pt) node[right] {$\overline{z}_n$};
		\fill (C) circle (1pt) node[left] {$-\frac{1}{z_n}$};
		\fill (D) circle (1pt) node[left] {$-\frac{1}{\overline{z}_n}$};
		\fill (E) circle (1pt) node[right] {$\frac{1}{\overline{z}_n}$};
		\fill (F) circle (1pt) node[right] {$\frac{1}{z_n}$};
		\fill (G) circle (1pt) node[left] {$-\overline{z}_n$};
		\fill (H) circle (1pt) node[below left] {$-z_n$};
		\fill (J) circle (1pt) node[right] {$\zeta_m$};
		\fill (K) circle (1pt) node[right] {$\overline{\zeta}_m$};
		\fill (L) circle (1pt) node[left] {$-\overline{\zeta}_m$};
		\fill (M) circle (1pt) node[left] {$-\zeta_m$};
	\end{tikzpicture}}\label{fig:figure2}
\centerline{\noindent {\small \textbf{Figure 2.} Distribution of the discrete spectrum $\mathcal{Z}$. The red one is  unit circle.}}

Additionally, the trace formula shows the dependence between the reflection coefficient and the discrete spectrum, which can be expressed as the following form
\begin{align}\label{trace}
s_{11}(z)=\prod_{j=1}^{4N_1+2N_2}\frac{z-\zeta_j}{z-\overline{\zeta}_j}\exp\left\{-\frac{1}{2\pi i}\int_{\Sigma}\frac{\log (1-\rho(s)\widetilde{\rho}(s))}{s-z}ds \right\},
\end{align}
and plays an important role in satisfying the $M(x;z)$ of the original RH problem for a long time asymptotic analysis.

In order to obtain the soliton solution, we need to consider the residue condition at discrete spectral points. Similar to \cite{Yang-PD-2021}, denote $c_n=s_{21}(z_{n})/s_{11}'(z_{n})$ as a normalized constant, the residue conditions can be expressed as
\begin{subequations}
\begin{align}
\res_{z=z_n}\left[\frac{\mu_{+,1}(z)}{s_{11}(z)}\right]&=c_ne^{2ik(z_n)\lambda(z_n)x}\mu_{-,2}(z_n),\\
\res_{z=-z_n}\left[\frac{\mu_{+,1}(z)}{s_{11}(z)}\right]&=-c_ne^{2ik(z_n)\lambda(z_n)x}\sigma_3\mu_{-,2}(z_n),\\
\res_{z=\overline{z}_n^{-1}}\left[\frac{\mu_{+,1}(z)}{s_{11}(z)}\right]&=\frac{\overline{q}_{-}}{q_{-}}\overline{z}_n^{-2}
\overline{c}_ne^{2ik(-\overline{z}_n^{-1})\lambda(-\overline{z}_n^{-1})x}\sigma_3\mu_{-,2}(-\overline{z}_n^{-1}),\\
\res_{z=-\overline{z}_n^{-1}}\left[\frac{\mu_{+,1}(z)}{s_{11}(z)}\right]&=-\frac{\overline{q}_{-}}{q_{-}}\overline{z}_n^{-2}
\overline{c}_ne^{2ik(-\overline{z}_n^{-1})\lambda(-\overline{z}_n^{-1})x}\mu_{-,2}(-\overline{z}_n^{-1}),\\
\res_{z=\overline{z}_n}\left[\frac{\mu_{+,2}(z)}{s_{22}(z)}\right]&=-\overline{c}_ne^{-2ik(\overline{z}_n)
\lambda(\overline{z}_n)x}\mu_{-,1}(\overline{z}_n),\\
\res_{z=-\overline{z}_n}\left[\frac{\mu_{+,2}(z)}{s_{22}(z)}\right]&=-\overline{c}_ne^{-2ik(\overline{z}_n)
\lambda(\overline{z}_n)x}\sigma_3\mu_{-,1}(\overline{z}_n),\\
\res_{z=\overline{z}_n^{-1}}\left[\frac{\mu_{+,2}(z)}{s_{22}(z)}\right]&=\frac{q_{-}}{\overline{q}_{-}}z_n^{-2}
c_ne^{-2ik(-\overline{z}_n^{-1})\lambda(-\overline{z}_n^{-1})x}\sigma_3\mu_{-,1}(-\overline{z}_n^{-1}),\\
\res_{z=-\overline{z}_n^{-1}}\left[\frac{\mu_{+,2}(z)}{s_{22}(z)}\right]&=\frac{q_{-}}{\overline{q}_{-}}z_n^{-2}
c_ne^{-2ik(-\overline{z}_n^{-1})\lambda(-\overline{z}_n^{-1})x}\mu_{-,1}(-\overline{z}_n^{-1}).
\end{align}
\end{subequations}
For $m=1,2,\ldots,N_{2}$, denoting $c_{m+N_1}=s_{21,m+N_1}/s_{11}'(\zeta_{m})$, one has
\begin{subequations}
\begin{align}
\res_{z=\zeta_m}\left[\frac{\mu_{+,1}(z)}{s_{11}(z)}\right]&=c_{m+N_1}e^{2ik(\zeta_m)\lambda(\zeta_m)x}\mu_{-,2}(\zeta_m),\\
\res_{z=-\zeta_m}\left[\frac{\mu_{+,1}(z)}{s_{11}(z)}\right]&=-c_{m+N_1}e^{2ik(\zeta_m)\lambda(\zeta_m)x}\sigma_3\mu_{-,2}(\zeta_m),\\
\res_{z=\overline{\zeta}_m}\left[\frac{\mu_{+,2}(z)}{s_{22}(z)}\right]&=-\overline{c}_{m+N_1}e^{-2ik(\overline{\zeta}_m)
\lambda(\overline{\zeta}_m)x}\mu_{-,1}(\overline{\zeta}_m),\\
\res_{z=-\overline{\zeta}_m}\left[\frac{\mu_{+,2}(z)}{s_{22}(z)}\right]&=-\overline{c}_{m+N_1}e^{-2ik(\overline{\zeta}_m)
\lambda(\overline{\zeta}_m)x}\sigma_3\mu_{-,1}(\overline{\zeta}_m).
\end{align}
\end{subequations}
In order to facilitate the discussion, we introduce the notation $C_n$ to satisfy that
$$C_n=-C_{n+N_1}=c_n,~ C_{n+2N_1}=-C_{n+3N_1}=\frac{\bar{q}_-}{q_-}\bar{z}_n^{-2}\bar{c}_n,~n=1,\ldots,N_1$$
$$C_{m+4N_1}=C_{m+4N_1+N_2}=-c_{m+N_1},~m=1,\ldots,N_2,$$
 and the  set $\sigma_d=  \left\lbrace \zeta_n,C_n\right\rbrace^{4N_1+2N_2}_{n=1}  $
is called the scattering data.

\subsection{Time evolution of the scattering data}\quad
So far, we have only considered the spectral problem, that is, the spatial Lax pair. Notice that the inverse scattering theory has a great advantage, that is, the evolution of scattering data is linear and trivial as the potential function $q(x,t)$ evolves based on equation \eqref{q1}. For details, please refer to the references \cite{Deift-1994,Faddeev-1987}, here we give a brief description. By calculating the partial derivative of the scattering relation \eqref{SM} with respect to $t$ and combining the part of Lax with respect to the time evolution, the relation $\partial_tS(z)=-(2\alpha k^2-4\alpha-\alpha^{-1})[\sigma_3,S(z)]$ can be obtained. Now the time evolution of scattering data and reflection coefficient is further expressed as
\begin{subequations}
\begin{align}
&s_{11,t}(z;t) =0,\hspace{0.5cm}  s_{21,t}(z;t) =-(2\alpha k^2-4\alpha-\alpha^{-1})k\lambda s_{21}(z;t),\\
&C(\eta_n)\rightarrow C(t,\eta_n)=c(0,\eta_n)e^{-(2\alpha k^2-4\alpha-\alpha^{-1})k(\eta_n)\lambda(\eta_n) t},\\
&\rho(z)\rightarrow \rho(t,z)=\rho(0,z)e^{-(2\alpha k^2-4\alpha-\alpha^{-1})k\lambda t}.
\end{align}
\end{subequations}
The further evolution of the scattering data over time $t$ can be expressed as
\begin{equation*}
\left\lbrace  e^{-(2\alpha k^2-4\alpha-\alpha^{-1})k\lambda t}\rho(z),\left\lbrace \eta_n,e^{-(2\alpha k^2-4\alpha-\alpha^{-1})^2-1)k(\eta_n)\lambda(\eta_n) t}C_n\right\rbrace^{4N_1+2N_2}_{n=1}\right\rbrace,
\end{equation*}
where $\left\lbrace  \rho(z),\left\lbrace \eta_n,C_n\right\rbrace^{4N_1+2N_2}_{n=1}\right\rbrace$ are derived from the initial value $q(x, 0) =
q_0(x)$. Additionally, we take the  phase function
\begin{equation}\label{CT}
\theta(z)=-k(z)\lambda(z)\left[\xi-(2\alpha k^2-4\alpha-\alpha^{-1}) \right],
\end{equation}
where for brevity denote $\theta_n=\theta(\zeta_n)$ and $\xi=x/t$.

\section{Inverse scattering: set up of the RH problem}\quad
For $z\in\Sigma$, combining the related properties of the modified eigenfunctions $\mu_{\pm}$ and the scattering data $S(z)$ shown in proposition \ref{pp1} and lemma \ref{lem2}, we introduce the piecewise meromorphic function
\begin{align}
M(z)=M(x,t;z):=\left\{ \begin{array}{ll}
\left(  \frac{\mu_{+,1}(x,t;z)}{s_{11}(z)}, \mu_{-,2}(x,t;z)\right),   &\text{as } z\in D^+,\\[12pt]
\left( \mu_{-,1}(x,t;z),\frac{\mu_{+,2}(x,t;z)}{s_{22}(z)}\right)  , &\text{as }z\in D^-,\\
\end{array}\right.
\end{align}
which satisfies the  following RH problem.
\begin{RHP}\label{RHP1}
Find a $2\times2$ matrix function $M(z)$ admits
\begin{enumerate}[(1)]
\item Analyticity: the matrix $M(z)$ is analytic in $\mathbb{C}\backslash\{\Sigma\cup\mathcal {Z}\}$.
\item Asymptotic behavior:
\begin{align}
&M(z)=e^{i\nu_{-}(x,q)\sigma_{3}}+\mathcal {O}(z^{-1}),\quad z\rightarrow\infty, \\
&M(z)=\frac{1}{z}e^{i\nu_{-}(x,q)\sigma_{3}}\sigma_{3}Q_{-}+\mathcal {O}(1),\quad z\rightarrow0.
\end{align}
\item Jump condition: the limits $M_{\pm}(z)=\mathop{\lim}_{\Sigma\ni z'\rightarrow z}M(x,t;z')$, then
\begin{align}
M_{+}(z)=M_{-}(z)V(z),\quad z\in\Sigma,
\end{align}
where
\begin{align}\label{jumpv}
V(z)=\left(\begin{array}{cc}
1-\widetilde{\rho}(z)\rho(z) & -e^{2it\theta}\widetilde{\rho}(z)\\
e^{-2it\theta}\rho(z) & 1
\end{array}\right).
\end{align}
\item Symmetries:
$M(z)=\sigma_2\overline{M(\bar{z})}\sigma_2$=$\sigma_1\overline{M(-\bar{z})}\sigma_1=\frac{i}{z}M(-1/z)\sigma_3Q_-$.
\item Residue conditions: the matrix function $M(z)$ has simple poles at the set $\mathcal {Z}$.
\begin{align}
&\res_{z=\zeta_n}M(z)=\lim_{z\to \zeta_n}M(z)\left(\begin{array}{cc}
0 & 0\\
C_ne^{-2it\theta_n} & 0
\end{array}\right),\label{RES1}\\
&\res_{z=\overline{\zeta}_n}M(z)=\lim_{z\to \overline{\zeta}_n}M(z)\left(\begin{array}{cc}
0 & -\overline{C}_ne^{2it\overline{\theta}_n}\\
0 & 0
\end{array}\right)\label{RES2}.
\end{align}
\item The solution of the mNLS \eqref{q1} is defined by
\begin{align}\label{recover-q}
q(x,t)=\lim_{z\rightarrow\infty}e^{i\nu_{\pm}(x,t;q)\sigma_3}(zM(z))_{12}.
\end{align}
\end{enumerate}
\end{RHP}

\section{The long time analysis}\quad
Starting from this section, we will use the Defit-Zhou steepest descent method \cite{DZ-1993} combined with the Dbar technique to do a long time asymptotic analysis of the mNLS equation \eqref{q1}. On the one hand, the triangular decomposition is performed on the jump matrix $V(z)$ that appears in the RH problem \ref{RHP1} similar to \cite{DZ-1993}. On the other hand, through the analysis of the obtained mixed RH problem, combined with Dbar  technique, not only the soliton solution of the mNLS equation \eqref{q1} can be obtained, but also the long time asymptotic solution is derived. Additionally, compared with the classic DZ method, the error accuracy is improved, that is, the error is $\mathcal {O}(t^{-3/4})$ at this time.
\subsection{Conjugation}\quad
Before the triangular decomposition of the jump matrix $V(z)$, we have to consider the oscillating terms $e^{\pm2it\theta}$ that appears in the jump matrix $V(z)$. This is because when one of the oscillating term decays to zero for $t\rightarrow\infty$, the other must be unbounded. By employing the phase function $\theta$ defined in \eqref{CT}, then the real part of $2it\theta$ can be expressed as
\begin{align}\label{Re}
Re(2it\theta)=txy\left[(\xi-6)\left(1+\frac{1}{(x^2+y^2)^2}\right)+(x^2-y^2)\left(1+\frac{1}
{(x^2+y^2)^4}\right)\right],
\end{align}
where $z=x+iy$ ($x,y\in\mathbb{R}$) and the symbol distribution of the further function $Im\theta$ is shown in Figure 3.
\\

\centerline{
{\rotatebox{0}{\includegraphics[width=3.0cm,height=2.75cm,angle=0]{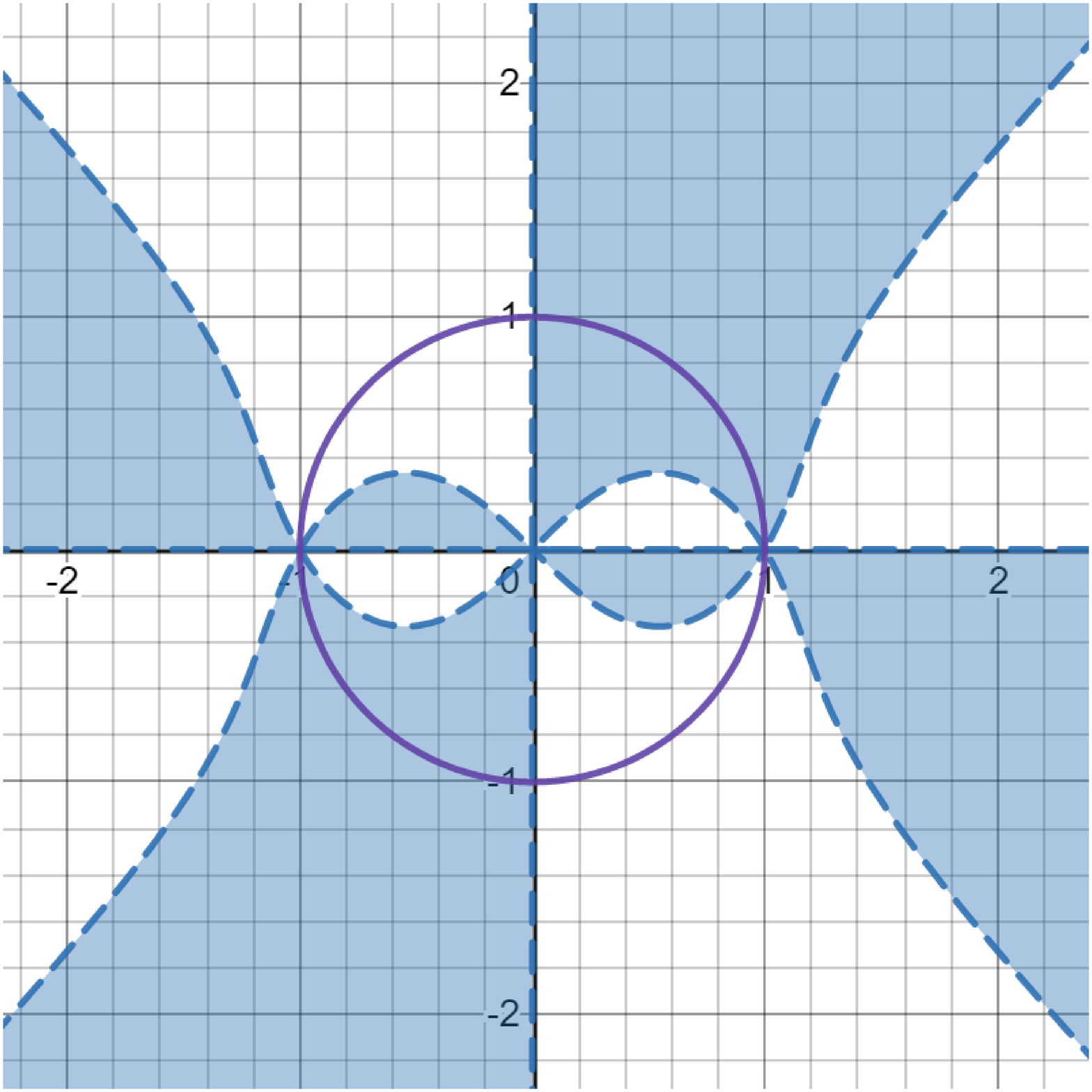}}}~~
{\rotatebox{0}{\includegraphics[width=3.0cm,height=2.75cm,angle=0]{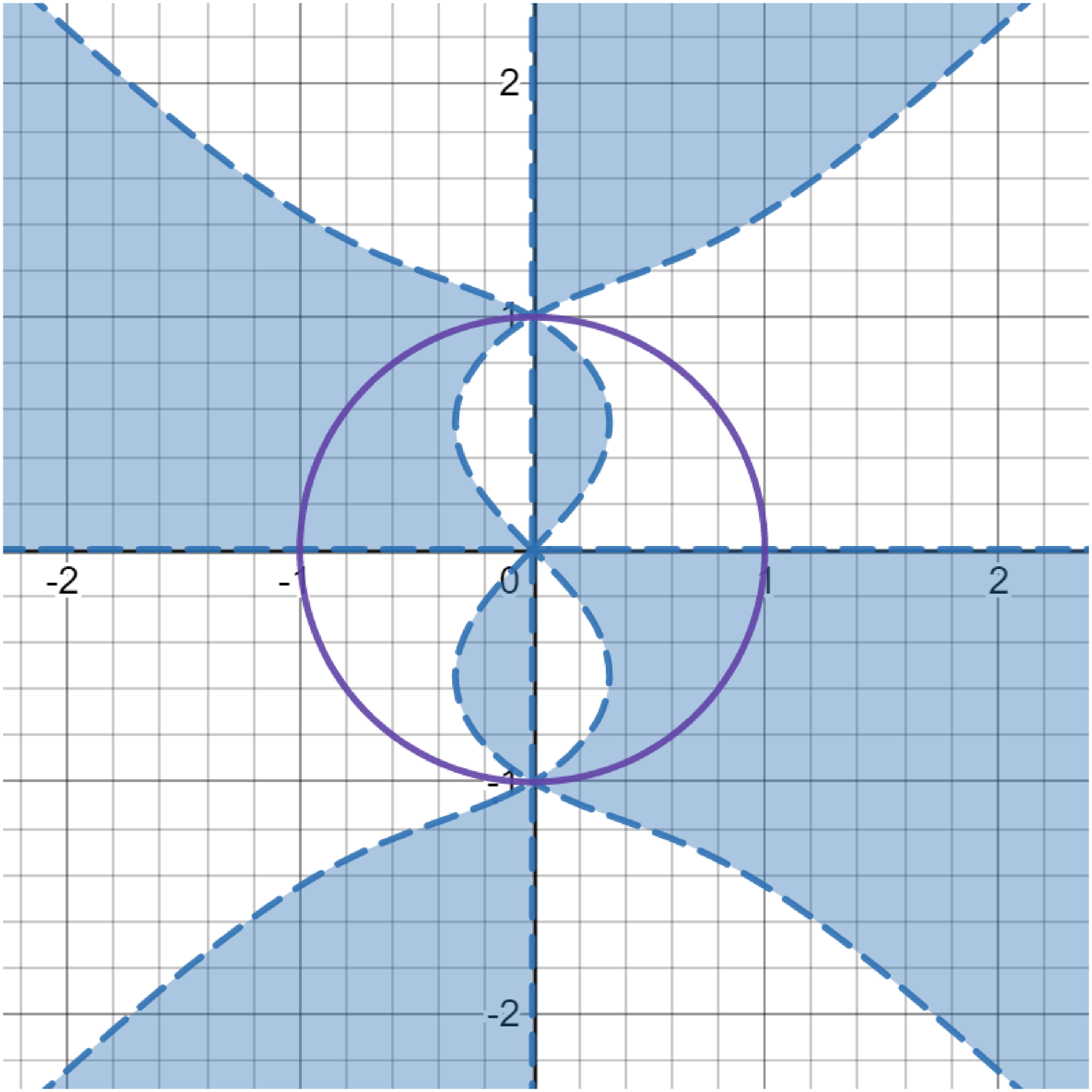}}}~~
{\rotatebox{0}{\includegraphics[width=3.0cm,height=2.75cm,angle=0]{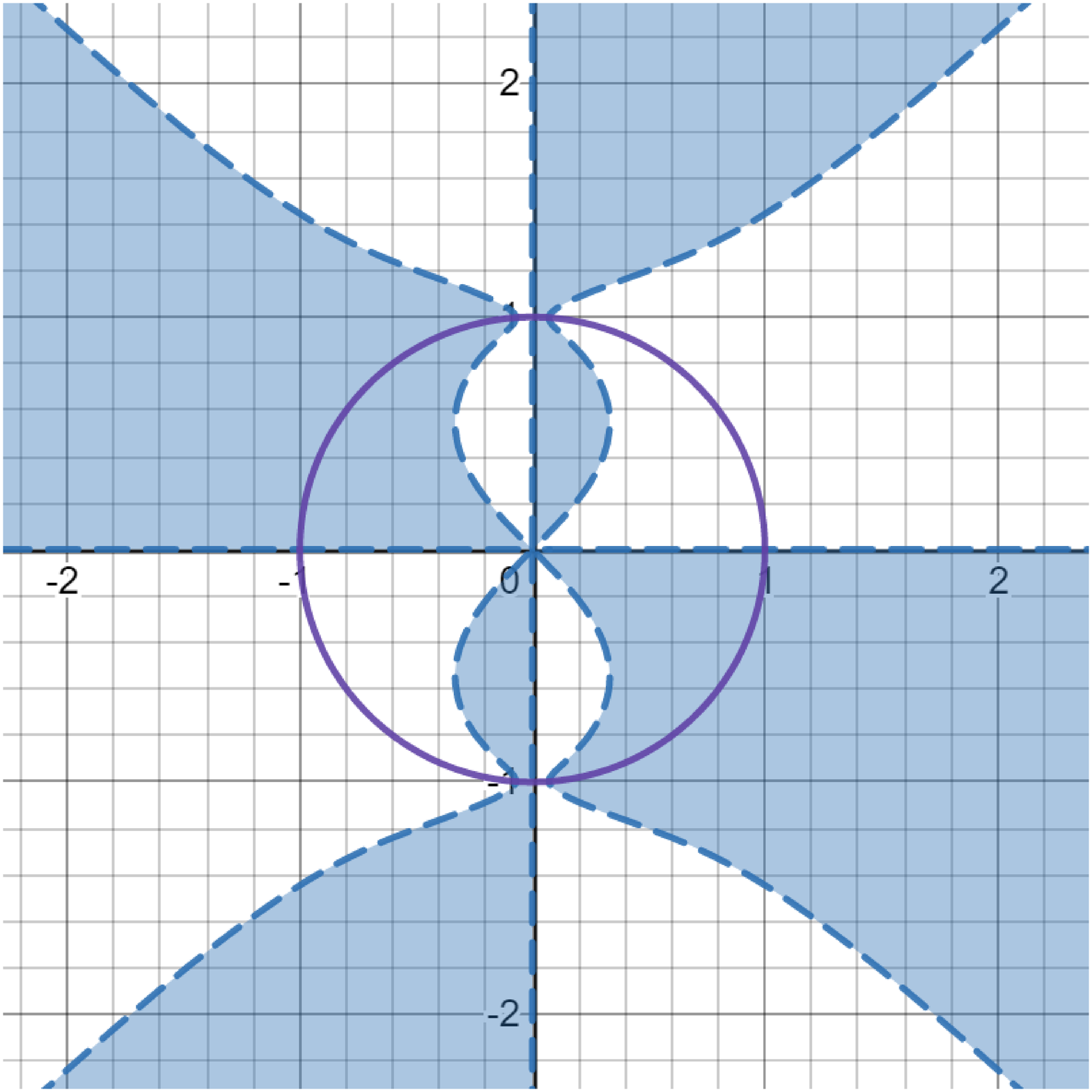}}}}

$\qquad\qquad\quad\quad(\textbf{a})\qquad \qquad\qquad\qquad(\textbf{b})
\qquad\qquad\quad\qquad\quad(\textbf{c})$\\

\centerline{
{\rotatebox{0}{\includegraphics[width=3.0cm,height=2.75cm,angle=0]{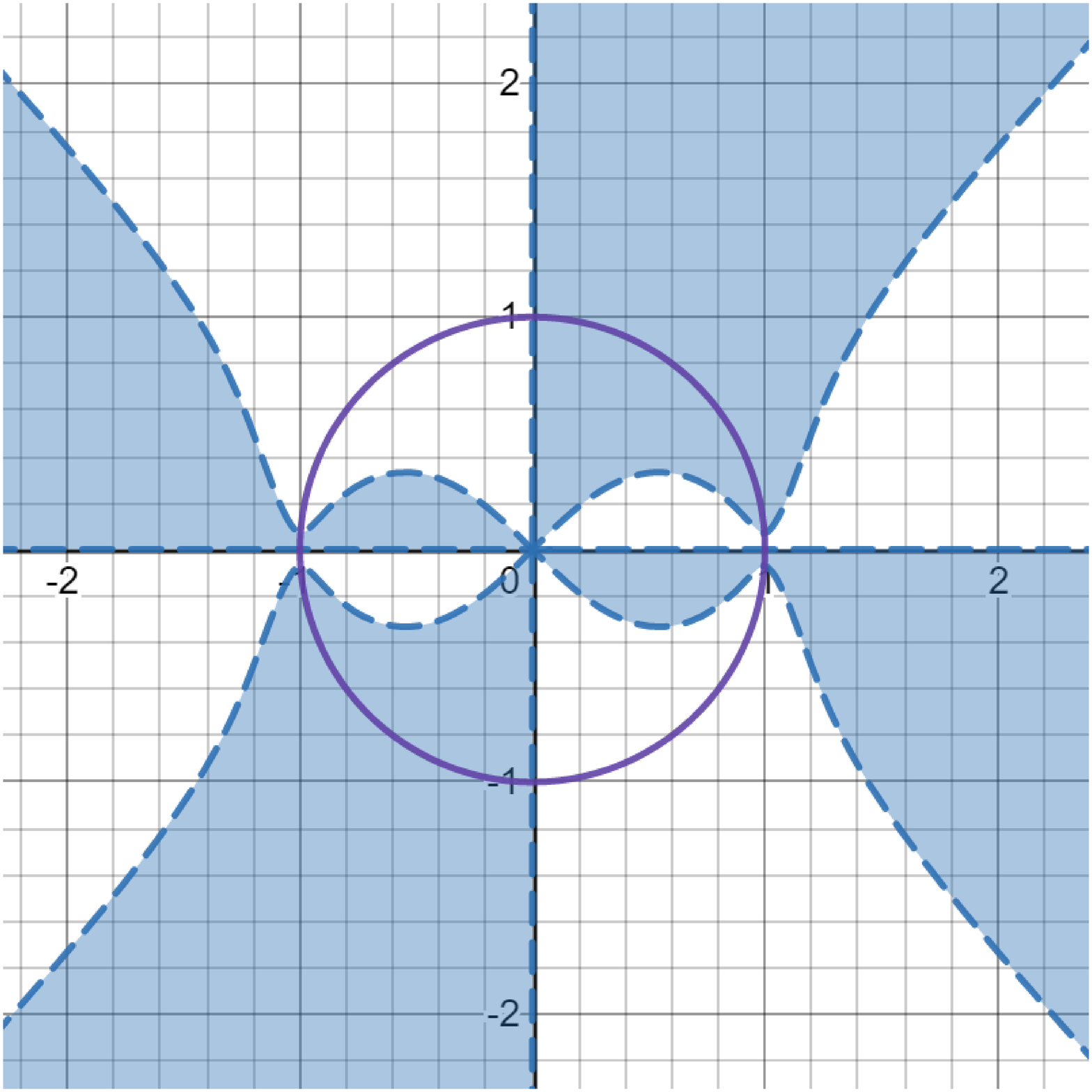}}}~~
{\rotatebox{0}{\includegraphics[width=3.0cm,height=2.75cm,angle=0]{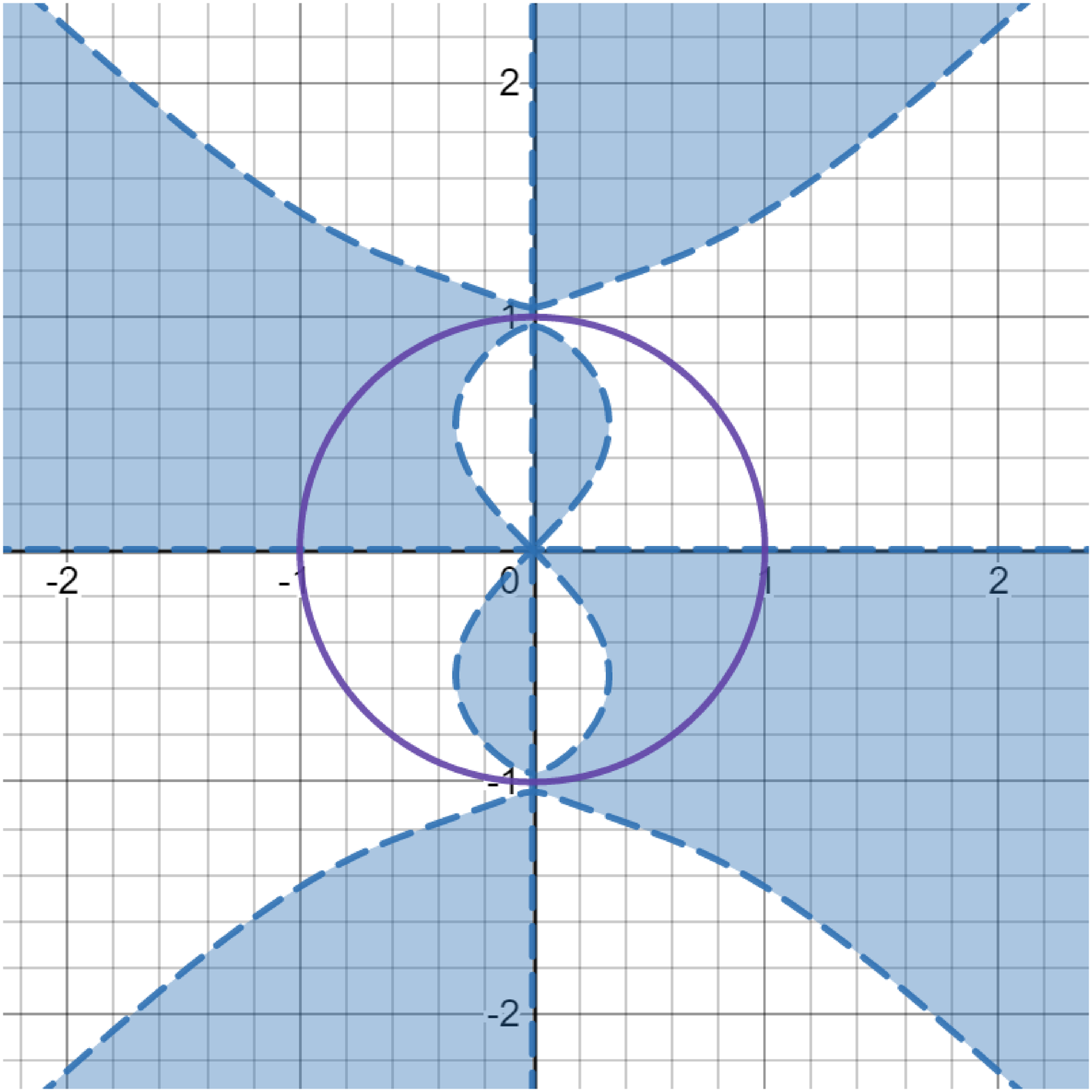}}}~~
{\rotatebox{0}{\includegraphics[width=3.0cm,height=2.75cm,angle=0]{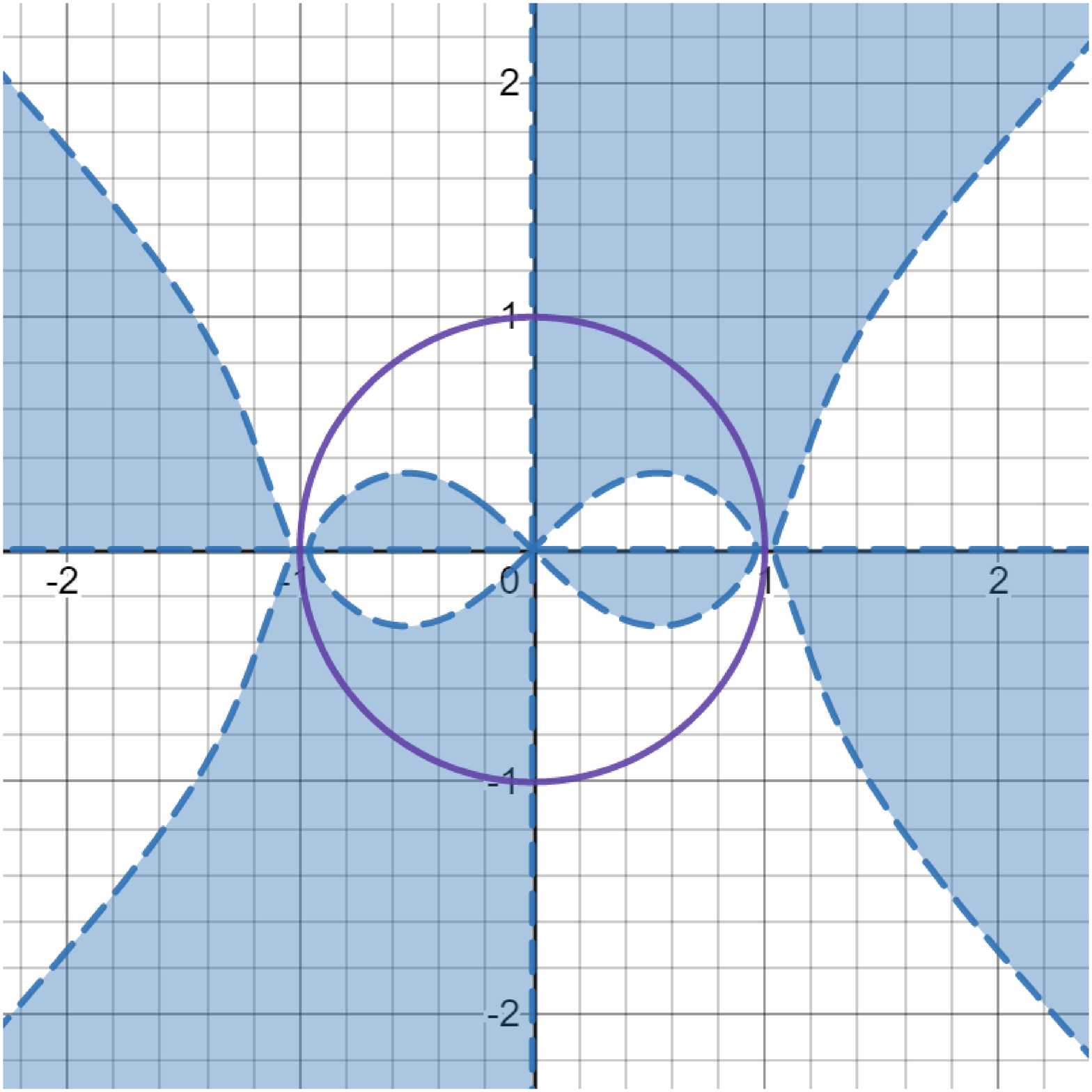}}}}

$\qquad\qquad\quad\quad(\textbf{d})\qquad \qquad\qquad\qquad(\textbf{e})
\qquad\qquad\quad\qquad\quad(\textbf{f})$\\

\centerline{
{\rotatebox{0}{\includegraphics[width=3.0cm,height=2.75cm,angle=0]{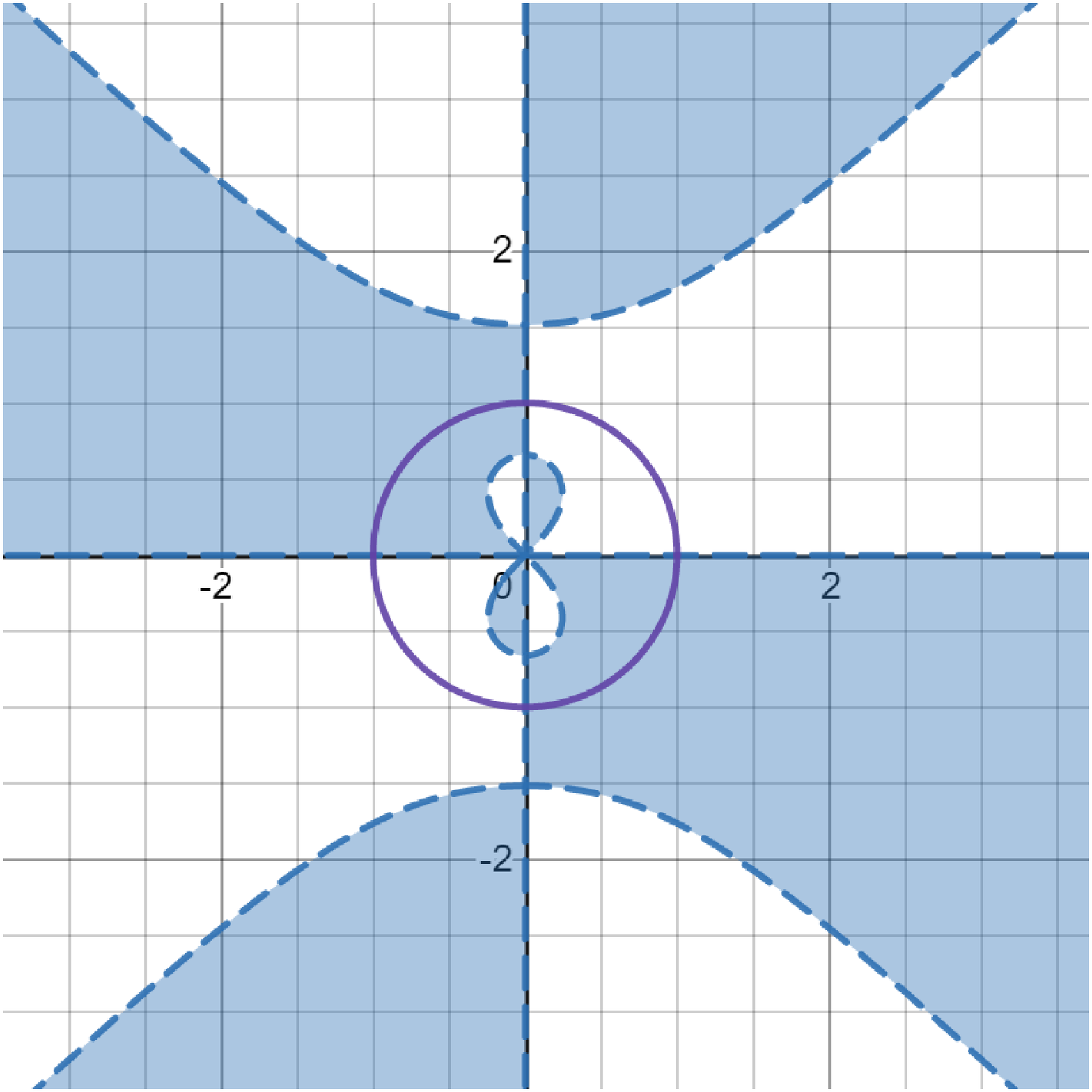}}}~~
{\rotatebox{0}{\includegraphics[width=3.0cm,height=2.75cm,angle=0]{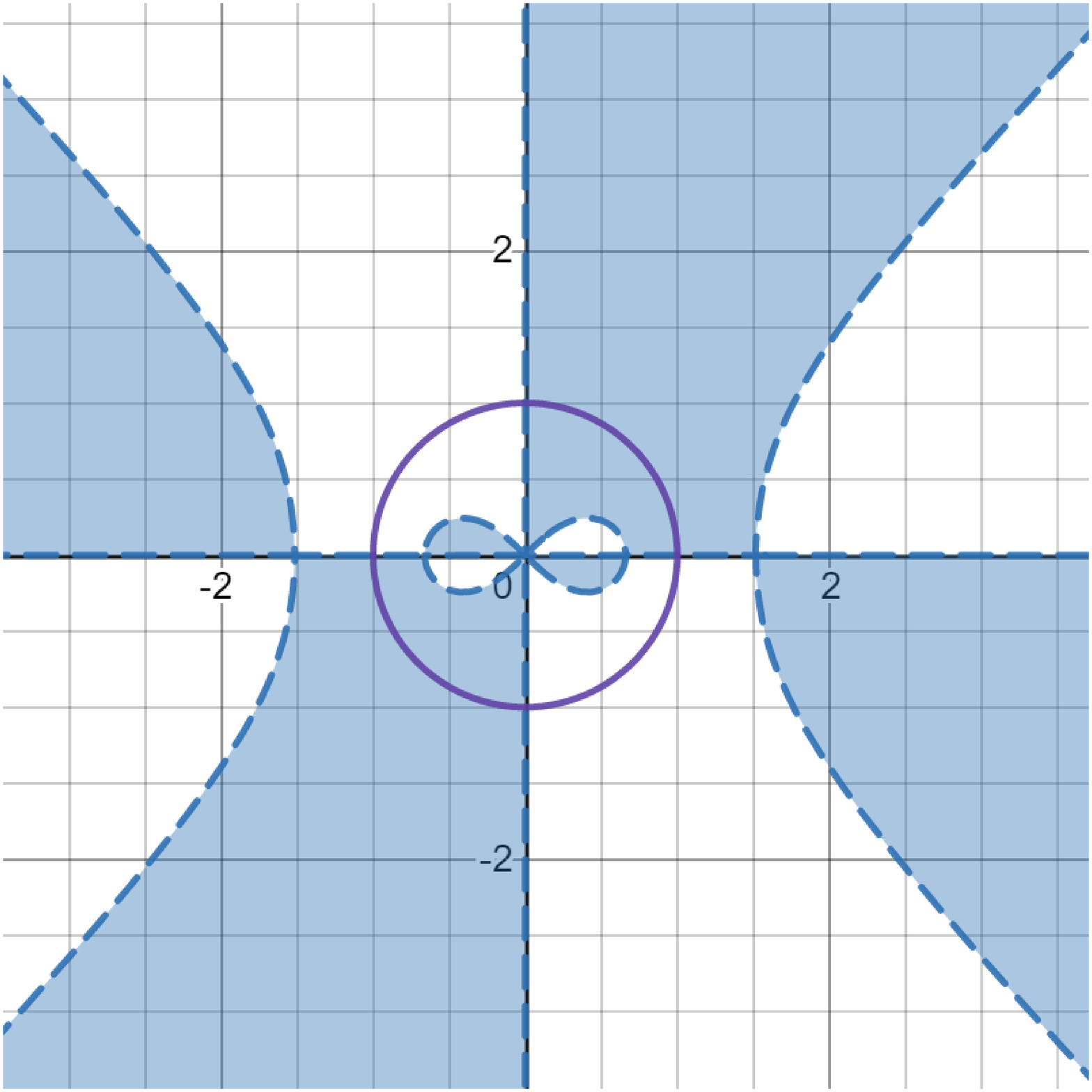}}}~~
{\rotatebox{0}{\includegraphics[width=3.0cm,height=2.75cm,angle=0]{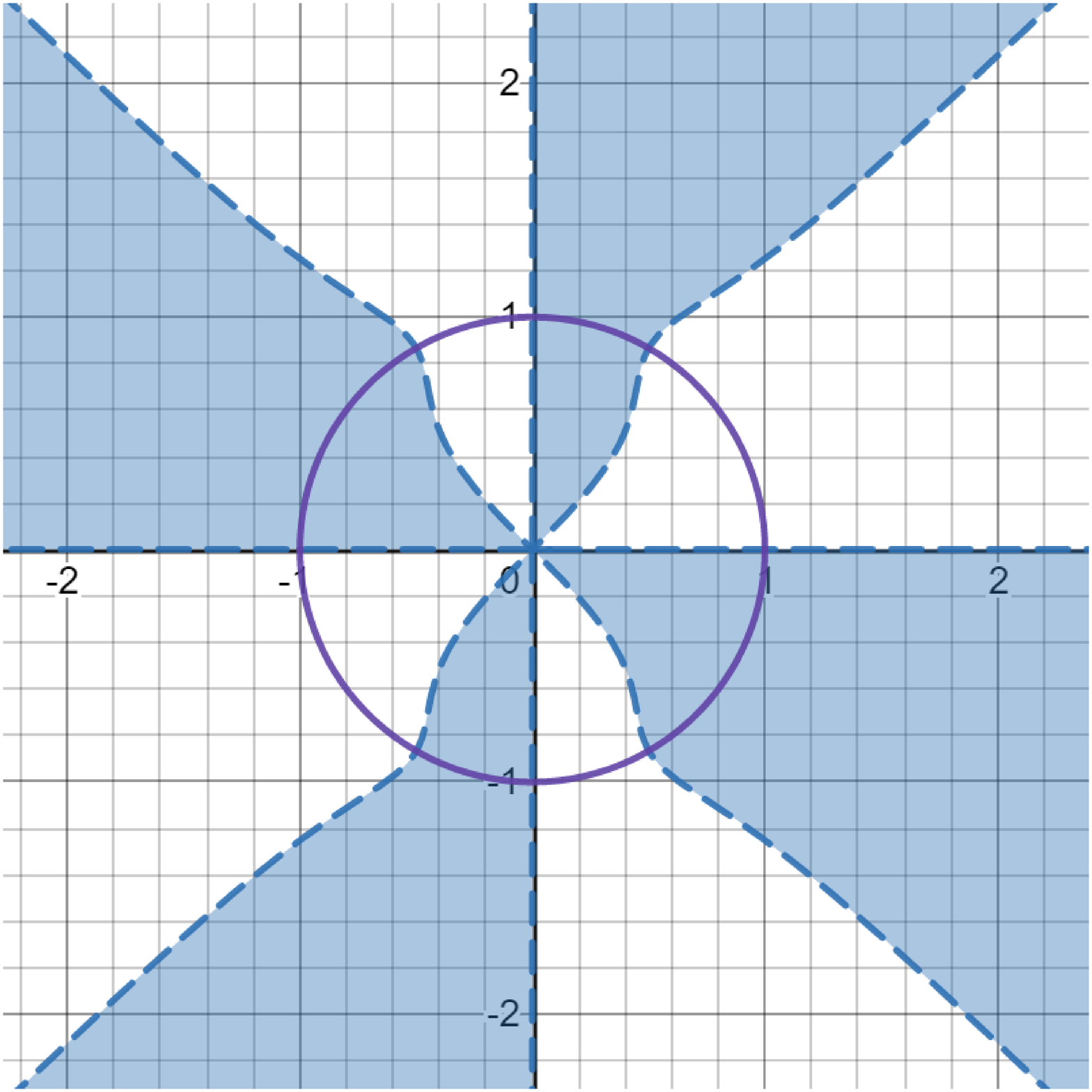}}}}

$\qquad\qquad\quad\quad(\textbf{g})\qquad \qquad\qquad\qquad(\textbf{h})
\qquad\qquad\quad\qquad\quad(\textbf{i})$\\

\noindent {\small \textbf{Figure 3.} The parametes $\xi=-5,7,6.99,5.99,7.01,5.01,8,4,6.5$
respectively to show all type of $\text{Im}\theta$.  In the blue region,
$\text{Im}\theta>0$, $\text{Im}\theta<0$ in the white region and the dashed line indicates that $\text{Im}\theta=0$.}\\

Note that the modulus of the oscillating term $e^{2it\theta}$ is $e^{Re(2it\theta_n)}$, naturally, there are three situations for the pole $z\in\mathcal {Z}$: those for $Re(2it\theta_n)>0$, corresponding to a connection coefficient $C_ne^{2it\theta_{n}}$ is exponentially large for $t\rightarrow\infty$; those for $Re(2it\theta_n)<0$, corresponding to a connection coefficient $C_ne^{2it\theta_{n}}$ is exponentially decaying for $t\rightarrow\infty$; those for $Re(2it\theta_n)=0$, the connection coefficient is bounded in time. In the next analysis process, we will deal with these three cases differently. For the exponential growth and decay discrete spectrum, we interpolate them to make the exponential growth discrete spectrum trade into a jump on an arbitrary small disk, while the discrete spectrum on the dotted line $Re(2it\theta_n)=0$ mainly produces soliton solutions.

In our work, we limit the  function $\xi$ to the interval $(5,7)$, which corresponds to figure $(i)$. For convenience,  the following notations are introduced
\begin{align}
&\mathcal{N}\triangleq\left\lbrace 1,\ldots,4N_1+2N_2\right\rbrace, \ \ \nabla=\left\lbrace n \in  \mathcal{N}  |\text{Im}\theta_n\leq 0\right\rbrace,\nonumber\\
&\Delta=\left\lbrace n \in  \mathcal{N} |\text{Im}\theta_n> 0\right\rbrace,\quad\Lambda=\left\lbrace n \in  \mathcal{N}  |\text{Im}\theta_n<\varepsilon_0\right\rbrace,\label{devide}
\end{align}
where $\varrho_0=\min_{n\in\mathcal {N}\backslash\Lambda}\{|Im\theta|\}>\varepsilon_0$ is a sufficiently small constant. In order to distinguish the attenuation region of the oscillation term, the interval is further divided as follows
\begin{align}
	&\nabla_1=\left\lbrace j \in \left\lbrace 1,\ldots,N_1\right\rbrace  |\text{Im}\theta(z_j)< 0\right\rbrace,
	~~\nabla_2=\left\lbrace i \in \left\lbrace 1,\ldots,N_2\right\rbrace  |\text{Im}\theta(w_i)< 0\right\rbrace,\nonumber\\
	&\Delta_1=\left\lbrace j \in \left\lbrace 1,\ldots,N_1\right\rbrace  |\text{Im}\theta(z_j)> 0\right\rbrace,
	~~\Delta_2=\left\lbrace i \in \left\lbrace 1,\ldots,N_2\right\rbrace  |\text{Im}\theta(w_i)> 0\right\rbrace,\nonumber\\
	&\Lambda_1=\left\lbrace j_0 \in \left\lbrace 1,\ldots,N_1\right\rbrace  |\text{Im}\theta(z_{j_0})\leq \varepsilon_0\right\rbrace,~~\Lambda_2=\left\lbrace i_0 \in \left\lbrace 1,\ldots,N_2\right\rbrace  |\text{Im}\theta(w_{i_0})\leq \varepsilon_0\right\rbrace.\nonumber
\end{align}

The next step is to use two known matrix trigonometric factorizations for the jump matrix $V(z)$ in \eqref{jumpv}
\begin{align}\label{jumpv1}
&V(z)=\left(\begin{array}{cc}
1-\widetilde{\rho}(z)\rho(z) & -e^{2it\theta}\widetilde{\rho}(z)\\
e^{-2it\theta}\rho(z) & 1
\end{array}\right)=U_L(z)U_R(z)=W_L(z)T_{0}(z)W_R(z),\\
&U_L(z)=\left(\begin{array}{cc}
	1 & -\widetilde{\rho}(z) e^{2it\theta}\\
	0 & 1
\end{array}\right),\quad U_R(z)=\left(\begin{array}{cc}
1 & 0 \\
\rho(z) e^{-2it\theta} & 1
\end{array}\right),\\
&W_L(z)=\left(\begin{array}{cc}
	1 & \\
	\frac{\rho(z) e^{-2it\theta}}{1-\rho(z)\widetilde{\rho}(z)} & 1
\end{array}\right),~ T_{0}(z)=(1-\rho(z)\widetilde{\rho}(z))^{\sigma_3},~ W_R(z)=\left(\begin{array}{cc}
1 & -\frac{\widetilde{\rho}(z) e^{2it\theta}}{1-\rho(z)\widetilde{\rho}(z)} \\
0 & 1
\end{array}\right).
\end{align}

Note that for $\zeta_n$ with $n\in\Delta$, the residue conditions at these poles are unbounded as $t\rightarrow\infty$ and the purpose of nonlinear steepest descent method is to exchange these oscillatory terms appearing in residues into new jump matrices so that they are asymptotically zero when $t$ approaches infinity. Now define the function
\begin{align}\label{T}
T(z)&=T(z,\xi)=\prod_{n\in \Delta}\dfrac{z-\zeta_n}{\bar{\zeta}_n^{-1}z-1}\delta (z)
=\prod_{j\in \Delta_1}\dfrac{z^2-z_j^2}{\bar{z}_j^{-2}z^2-1}\dfrac{z^2-\bar{z}_j^{-2}}{z_j^{2}z^2-1}\prod_{\ell\in \Delta_2}\dfrac{z^2-w_\ell^2}{w_\ell^{2}z^2-1}\delta (z),
\end{align}
where
\begin{align*}
\delta (z)&=\exp\left(-\frac{1}{2\pi i}\int _{i\mathbb{R}}\left( \dfrac{1}{s-z}-\frac{1}{2s}\right) \log (1-\rho(s)\widetilde{\rho}(s))ds\right).
\end{align*}
\begin{lem}\label{lem3}
The function $T(z)$ in \eqref{T} is meromorphic in $\mathbb{C}\backslash\mathbb{R}$ with simple poles at the $\eta_{n}$ and simple zeros at the $\overline{\eta}_{n}$ for $n\in\Delta$, and admits the following conditions
\begin{enumerate}[(1)]
\item Jump condition: for $z\in\mathbb{R}$
\begin{align}\label{TJ}
T_{+}(z)=(1-\rho(z)\widetilde{\rho}(z))T_{-}(z).
\end{align}
\item Symmetries:
\begin{align}\label{ST}
T(z)=\overline{T^{-1}(\bar{z})}=T^{-1}(-z^{-1}).
\end{align}
\item The limit of the function $T(z)$
\begin{align}
T(\infty):=\lim_{z\rightarrow\infty}T(z)=\prod_{j\in \Delta_1} \overline{z}_j^2z_j^{-2}\prod_{\ell\in \Delta_2}\overline{w}_\ell^{2}\exp\left(\frac{1}{4\pi i}\int_{i\mathbb{R}}s^{-1}  \log (1-\rho(s)\widetilde{\rho}(s))ds\right).
	\end{align}
\item As $z\rightarrow\infty$, the asymptotic expansion for $T(z)$
\begin{align}
	T(z)=T(\infty)\left( 1+z^{-1}\frac{1}{2\pi i}\int _{i\mathbb{R}}\log (1-\rho(s)\widetilde{\rho}(s))ds+ \mathcal{O}(z^{-2})\right) \label{expT}.
	\end{align}
\item $T(z) $ is  continuous at $z=0$ with
\begin{align}
	\lim_{z\to 0}T(z)=T(0)=T(\infty)^{-1} \label{T0}.
	\end{align}
\item There exists a constant $A(q_0)$ such that
\begin{align}
\left|\frac{s_{11}(z)}{T(z)}\right|\leq A(q_0),
\end{align}
which is meromorphic function in $D^+\cap \left\lbrace z\in\mathbb{C}|\text{Re} z>0 \right\rbrace $.
\end{enumerate}
\end{lem}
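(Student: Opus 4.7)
The plan is to verify the six claims in order, each flowing from standard properties of finite Blaschke-type products and of Cauchy integrals with H\"older density. First I would observe that the rational product $\prod_{n\in\Delta}(z-\zeta_n)/(\overline{\zeta}_n^{-1}z-1)$ is meromorphic on $\mathbb{C}$ with simple zeros and poles at the prescribed points (none of which lie on $\mathbb{R}$ by Assumption \ref{initialdata}(1)), while $\delta(z)$ is the exponential of a Cauchy-type integral whose density $\log(1-\rho\widetilde{\rho})$ lies in $L^1\cap L^\infty$ of the integration contour by Assumption \ref{initialdata}(3). Hence $\delta$ is holomorphic off that contour and the whole $T(z)$ has the claimed meromorphic structure. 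Claim (1) then follows from applying the Plemelj--Sokhotski formula to $\log\delta$: the boundary values of the Cauchy integral differ by $\log(1-\rho\widetilde{\rho})$, and exponentiating produces \eqref{TJ}; the Blaschke part is analytic across the contour and contributes nothing to the jump.

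For the symmetries (2), the involutions $z\mapsto\bar z$ and $z\mapsto -z^{-1}$ act directly on each M\"obius factor: by the symmetries of the discrete spectrum following from \eqref{q7}, the set $\Delta$ is preserved by both involutions, so the rational product is symmetric as a reciprocal. The Cauchy-integral piece is handled by the changes of variable $s\mapsto\bar s$ and $s\mapsto -s^{-1}$ combined with the symmetry \eqref{RS} of $\rho\widetilde{\rho}$; the subtracted normalisation $-\tfrac{1}{2s}$ in the definition of $\delta$ is designed precisely to absorb the boundary term created by the inversion $s\mapsto-s^{-1}$. Items (3)--(4) are then direct Taylor expansions: writing $(s-z)^{-1}=-\sum_{k\ge 0}s^k z^{-k-1}$ inside the exponent yields a convergent series in negative powers of $z$ whose first two terms produce \eqref{expT}, while the limits of the M\"obius factors supply the product constants defining $T(\infty)$. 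For (5), the cleanest path is to invoke the second identity in (2): letting $z\to 0$ in $T(z)=T^{-1}(-z^{-1})$ and using (3) gives $T(0)=T(\infty)^{-1}$.

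The main obstacle is claim (6). The strategy is to compare $T(z)$ with the trace formula \eqref{trace} for $s_{11}(z)$. The Blaschke part of $s_{11}$ runs over the entire discrete spectrum $\{\zeta_j\}_{j=1}^{4N_1+2N_2}$, whereas that of $T$ runs only over $\Delta$, so in the quotient $s_{11}/T$ only the factors indexed by $\nabla=\mathcal{N}\setminus\Delta$ survive, and by the very definition \eqref{devide} of $\nabla$ their poles lie outside $D^+\cap\{\text{Re}\,z>0\}$. On the exponential side, both Cauchy integrals involve the same density $\log(1-\rho\widetilde{\rho})$ and differ by a holomorphic piece that is bounded in the relevant sector thanks to Assumption \ref{initialdata}(3). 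Together this shows that $s_{11}/T$ is holomorphic and bounded on the closure of $D^+\cap\{\text{Re}\,z>0\}$, with finite limits at both $\infty$ (from (3)--(4)) and $0$ (from (5)); a maximum-modulus argument then yields the constant $A(q_0)$. The delicate point is the bookkeeping: tracking exactly which M\"obius factors cancel, which survive, and keeping the resulting function pole-free in the sector, all of which depends on the precise partition of $\mathcal{N}$ into $\Delta$ and $\nabla$ induced by the sign of $\text{Im}\,\theta$.
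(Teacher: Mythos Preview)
Your proposal follows essentially the same route as the paper's proof for claims (1)--(5): Plemelj for the jump, direct verification of the symmetries factor by factor, and Laurent expansion for the asymptotics. Your derivation of (5) by sending $z\to 0$ in the identity $T(z)=T^{-1}(-z^{-1})$ is exactly the ``simple calculation'' the paper has in mind.

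The one substantive difference is in the conclusion of claim (6). You propose to establish holomorphy of $s_{11}/T$ in the sector, check finite limits at $0$ and $\infty$, and then invoke a maximum-modulus argument. The paper instead bounds each surviving factor pointwise: after dividing the trace formula \eqref{trace} by $T(z)$, the Blaschke factors indexed by $\nabla$ are shown to have modulus $\leq 1$ throughout $D^+$, and the remaining exponential factor $\exp\bigl(-\tfrac{1}{2\pi i}\int_{\mathbb{R}}\tfrac{\log(1-\rho\tilde\rho)}{s-z}\,ds\bigr)$ is bounded by writing the real part of its exponent as a Poisson integral and using $\|\log(1+|\rho|^2)\|_{L^\infty}<\infty$. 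Your maximum-modulus route is defensible but is missing one ingredient: you would need boundedness of $s_{11}/T$ along the \emph{boundary} of the sector (the positive real and imaginary half-axes), where the Cauchy integrals have jump discontinuities, and you do not indicate how to get it. The paper's direct pointwise estimate sidesteps this issue, which is why it is the cleaner finish here.
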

\begin{proof}
Using the definition of $T(z)$ in \eqref{T}, it is obvious that $T(z)$ has simple poles at $\overline{\zeta}_{n}$ and simple zeros at $\zeta_n$ for $n\in\Delta$. The jump condition \eqref{TJ} follows from  the Plemej formula. To show the symmetries
\eqref{ST}, we denote $T(z)=T_1(z)T_2(z)T_3(z)$. For $T_{1}(z)$
\begin{align*}
T_{1}(\overline{z})=\prod_{j\in \Delta_1}\dfrac{\overline{z}^2-z_j^2}{\overline{z}_j^{-2}\overline{z}^2-1}\dfrac{\overline{z}^2-
\overline{z}_j^{-2}}{z_j^{2}z^2-1}\Longrightarrow  T_{1}^{-1}(\overline{z})=\prod_{j\in \Delta_1}\frac{\overline{z}^2-z_j^2}{\overline{z}_j^{-2}\overline{z}^2-1}\frac{\overline{z}^2-z_j^{-2}}
{\overline{z}_j^{2}\overline{z}^2-1},
\end{align*}
which implies that $T_1(z)=\overline{T_1^{-1}(\bar{z})}$. Similarly, $T_2(z)=\overline{T_2^{-1}(\bar{z})}$ and $T_3(z)=\overline{T_3^{-1}(\bar{z})}$ can be proved. Finally the first equality in the symmetry \eqref{ST} can be obtained. The second equality follows from the symmetry \eqref{RS}. By using Laurent expansion, the claim 3 is derived immediately. The claims 4 and 5 can be obtained by simple calculation.  Finally, taking the ratio $\frac{s_{11}(z)}{T(z)}$ into account one has
\begin{align*}
\frac{s_{11}(z)}{T(z)}=&\prod_{j\in \Delta_1} \overline{z}_j^{-2}z_j^{2}\prod_{\ell\in \Delta_2}\overline{w}_\ell^{-2}e^{\left(-\frac{1}{4\pi i}\int_{i\mathbb{R}}s^{-1}  \log (1-\rho(s)\widetilde{\rho}(s))ds\right)}\times\\
&\prod_{j\in \nabla_1}\dfrac{z^2-z_j^2}{\bar{z}_j^{-2}z^2-1}\dfrac{z^2-\bar{z}_j^{-2}}{z_j^{2}z^2-1}\prod_{\ell\in \nabla_2}\dfrac{z^2-w_\ell^2}
{w_\ell^{2}z^2-1}\exp\left\lbrace -\frac{1}{2\pi i}\int _{\mathbb{R}}\frac{\log (1-\rho(s)\tilde{\rho}(s))}{s-z}\right\rbrace.
\end{align*}
Note that the module before the last factors are obviously $\leq1$ for $z\in D^{+}$ while the real part of the last factor can be estimated as follows
\begin{align*}
&\left|-\frac{Imz}{2\pi}\int _{\mathbb{R}}\frac{\log (1+|\rho(s)|^2)}{|s-z|^2}ds\right|=\left|-\frac{Imz}{2\pi}\int _{\mathbb{R}}\frac{\log (1+|\rho(s)|^2)}{(s-Rez)^{2}+Imz^2}ds\right|\\ &\leq\frac{1}{2\pi}\left|\left|\log (1+|\rho(s)|^2)\right|\right|_{L^{\infty}(\mathbb{R})}\left|\left|\frac{Imz}{(s-Rez)^{2}+Imz^2}\right|
\right|_{L^{1}(\mathbb{R})}.
\end{align*}
The first term of the above equation can be shown to be bounded using Lemma 4.5 in \cite{SandRNLS}, and the second term can be verified to be bounded by a simple calculation.
\end{proof}

Now performing the above analysis, i.e., by interpolating the poles we trade the poles to any small disk enclosing these poles and the new jumps are bounded as time tends to infinity. Define
\begin{align}\label{M1}
M^{(1)}(z)=\left\{ \begin{array}{ll}
		T(\infty)^{-\sigma_3}M(z)\left(\begin{array}{cc}
			1 & 0\\
			-C_n(z-\eta_n)^{-1}e^{-2it\theta_n} & 1
		\end{array}\right)T^{\sigma_3}(z),   &\text{as } |z-\eta_n|<\epsilon_0,~n\in\nabla\setminus\Lambda,\\[12pt]
		T(\infty)^{-\sigma_3}M(z)\left(\begin{array}{cc}
			1 & -C_n^{-1}(z-\eta_n)e^{2it\theta_n}\\
			0 & 1
		\end{array}\right)T^{\sigma_3}(z),   &\text{as } |z-\eta_n|<\epsilon_0,~n\in\Delta,\\
		T(\infty)^{-\sigma_3}M(z)\left(\begin{array}{cc}
		1 & \overline{C}_n(z-\overline{\eta}_n)^{-1}e^{2it\overline{\theta}_n}\\
		0 & 1
		\end{array}\right)T^{\sigma_3}(z),   &\text{as } 	|z-\overline{\eta}_n|<\epsilon_0,~n\in\nabla\setminus\Lambda,\\
		T(\infty)^{-\sigma_3}M(z)\left(\begin{array}{cc}
		1 & 0	\\
		\overline{C}_n^{-1}(z-\overline{\eta}_n)e^{-2it\overline{\theta}_n} & 1
		\end{array}\right)T^{\sigma_3}(z),   &\text{as } 	|z-\overline{\eta}_n|<\epsilon_0,~n\in\Delta,\\
	T(\infty)^{-\sigma_3}M(z)T^{\sigma_3}(z) &\text{as } 	z \text{ in elsewhere},
	\end{array}\right.
\end{align}
where  $\epsilon_0$  is a positive constant with
\begin{align}
\epsilon_0=\frac{1}{2}\min\left\lbrace\min_{ j\neq i\in \mathcal{N}}|\eta_i-\eta_j|, \min_{j\in \mathcal{N}}\left\lbrace |\text{Im}\eta_j|, |\text{Re}\eta_j|\right\rbrace ,\min_{j\in \mathcal{N}\setminus\Lambda,\text{Im}\theta(z)=0}|\eta_j-z| \right\rbrace.
\end{align}
Consider the contour shown in Figure 4:
\begin{align}
\Sigma^{(1)}=\mathbb{R}\cup i\mathbb{R}\cup\left[\cup_{n\in\mathcal{N}\setminus\Lambda}\left( |z-\eta_n|=\epsilon_0\cup |z-\overline{\eta}_n|=\epsilon_0\right)  \right].
\end{align}
\\
\centerline{{\rotatebox{0}{\includegraphics[width=8.0cm,height=7.75cm,angle=0]{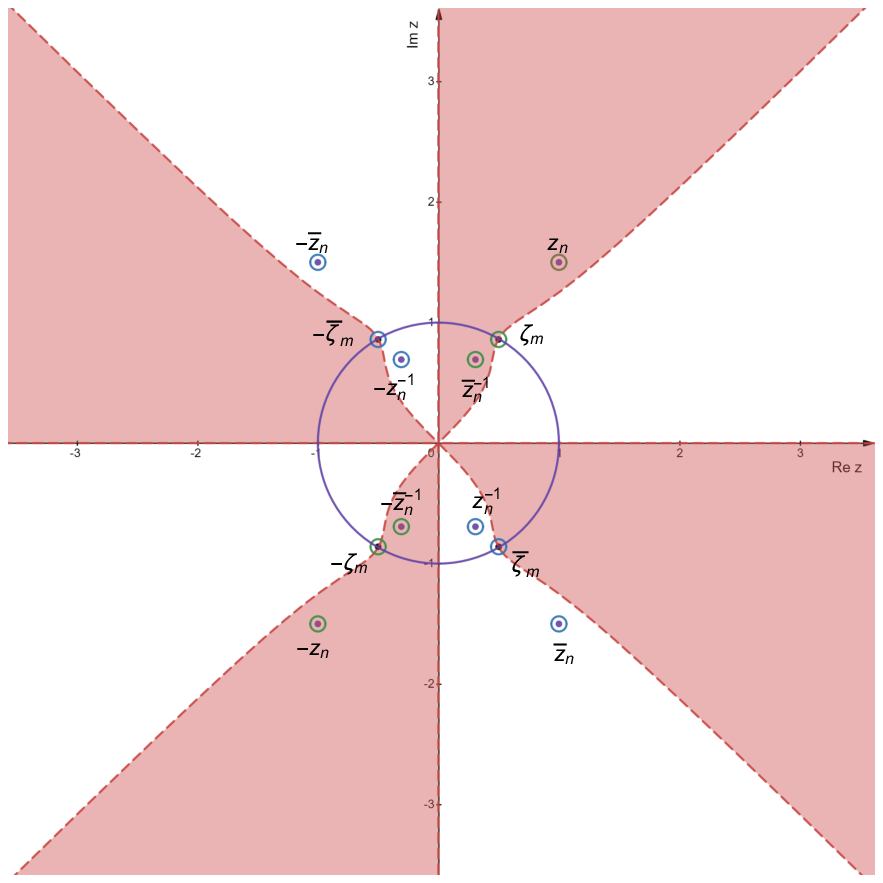}}}}
\centerline{\noindent {\small \textbf{Figure 4.} The contour $\Sigma^{(1)}$, the dotted line indicates that $Im\theta=0$.}}\\

\begin{RHP}\label{RHP2}
Find a $2\times2$ matrix-valued function $M^{(1)}(z)$ such that
\begin{enumerate}[(1)]
\item The function $M^{(1)}(z)$ is meromorphic in $\mathbb{C}\backslash\Sigma^{(1)}$.
\item
$M^{(1)}(z)=e^{i\nu_{-}(x,q)\sigma_{3}}+\mathcal {O}(z^{-1}),\quad\qquad z\rightarrow\infty,$\\
$M^{(1)}(z)=\frac{1}{z}e^{i\nu_{-}(x,q)\sigma_{3}}\sigma_{3}Q_{-}+\mathcal {O}(1),\quad z\rightarrow0.$
\item The symmetries:
$M^{(1)}(z)=\sigma_2\overline{M^{(1)}(\bar{z})}\sigma_2$=$\sigma_1\overline{M^{(1)}(-\bar{z})}
\sigma_1=\frac{i}{z}M^{(1)}(-1/z)\sigma_3Q_-$.
\item  For $z\in\Sigma^{(1)}$, there exists the non-tangential boundary values $M_{\pm}^{(1)}(z)$ such that
\begin{align}
	M^{(1)}_+(z)=M^{(1)}_-(z)V^{(1)}(z),
\end{align}
with
\begin{align}
	V^{(1)}(z)=\left\{\begin{array}{ll}\left(\begin{array}{cc}
		1 & -e^{2it\theta}\widetilde{\rho}(z)T^{-2}(z) \\
		0 & 1
	\end{array}\right)
		\left(\begin{array}{cc}
			1 & 0\\
			e^{-2it\theta}\rho(z)T^2(z) & 1
		\end{array}\right),   ~~\text{as } z\in 	\mathbb{R},\\[12pt]
		\left(\begin{array}{cc}
		1 & 0\\
		\frac{e^{-2it\theta} \rho(z)T_+^{2}(z)}{1-\widetilde{\rho}(z)\rho(z)} & 1
	\end{array}\right)\left(\begin{array}{cc}
	1 & -\frac{e^{2it\theta}\widetilde{\rho}(z)T_-^{-2}(z)}{1-\widetilde{\rho}(z)\rho(z)}\\
	0 & 1
\end{array}\right),  ~~~ ~~\text{as } z\in i\mathbb{R},\\[12pt]
		\left(\begin{array}{cc}
		1 & 0\\
		-C_n(z-\eta_n)^{-1}T^2(z)e^{-2it\theta_n} & 1
		\end{array}\right),  ~~~ \text{as } 	|z- \eta_{n}|=\epsilon_0,~~n\in\nabla\setminus\Lambda,\\[12pt]
		\left(\begin{array}{cc}
			1 & -C_n^{-1}(z-\eta_n)T^{-2}(z)e^{2it\theta_n}\\
			0 & 1
		\end{array}\right),  ~~~~ \text{as } |z- \eta_{n}|=\epsilon_0,~~n\in\Delta,\\
		\left(\begin{array}{cc}
			1 & \overline{C}_n(z-\overline{\eta}_n)^{-1}T^{-2}(z)e^{2it\overline{\theta}_n}\\
			0 & 1
		\end{array}\right),  ~~~~ \text{as } 	|z-\overline{\eta}_{n}|=\epsilon_0,~~n\in\nabla\setminus\Lambda,\\
		\left(\begin{array}{cc}
			1 & 0	\\
			\overline{C}_n^{-1}(z-\overline{\eta}_n)e^{-2it\bar{\theta}_n}T^2(z) & 1
		\end{array}\right),  ~~~~~~ \text{as } 	|z-\overline{\eta}_{n}|=\epsilon_0,~~n\in\Delta.\\
	\end{array}\right.
\end{align}
\item Residue conditions: for $n\in\Lambda$ the function $M^{(1)}(z)$ defined in \eqref{M1} has simple poles at $\eta_{n}$ and $\overline{\eta}_{n}$
\begin{align}
	&\res_{z=\eta_n}M^{(1)}(z)=\lim_{z\to \eta_n}M^{(1)}(z)\left(\begin{array}{cc}
		0 & 0\\
		C_ne^{-2it\theta_n}T^2(\eta_n) & 0
	\end{array}\right),\\
	&\res_{z=\overline{\eta}_n}M^{(1)}(z)=\lim_{z\to \overline{\eta}_n}M^{(1)}(z)\left(\begin{array}{cc}
		0 & -\overline{C}_nT^{-2}(\overline{\eta}_n)e^{2it\overline{\theta}_n}\\
		0 & 0
	\end{array}\right).
\end{align}
\end{enumerate}
\end{RHP}
\begin{proof}
The analytic property, asymptotic property and symmetry of $M^{(1)}(z)$ inherit the related properties of $M(z)$ satisfied by the RH problem \ref{RHP1}. The jump conditions on real axis and imaginary axis follows from the related references \cite{DZ-1993,DZ-1994}. Take the jump condition on the disk $|z-\eta_n|=\epsilon_0$ as example, the other cases can be proved by the same method. Take counter clockwise as positive direction, for $z\in\{|z-\eta_n|=\epsilon_0\}$ with $n\in\Delta\backslash\Lambda$
\begin{align*}
T(\infty)^{-\sigma_3}M(z)\left(\begin{array}{cc}
			1 & 0\\
			-C_n(z-\eta_n)^{-1}e^{-2it\theta_n} & 1
		\end{array}\right)T(z)^{\sigma_3}=T(\infty)^{-\sigma_3}M(z)T(z)^{\sigma_3}V^{(1)}(z),
\end{align*}
which implies that
\begin{align*}
V^{(1)}(z)=\left(\begin{array}{cc}
			1 & 0\\
			-C_n(z-\eta_n)^{-1}T(z)^2e^{-2it\theta_n} & 1
		\end{array}\right).
\end{align*}
Finally, we consider the residue conditions at the poles $\eta_n$ and $\overline{\eta}_n$. There are the simple poles at $\overline{\eta}_n$ and simple zeros at $\eta_n$ from the definition of function $T(z)$ defined in \eqref{T}. Then
\begin{align*}
\res_{z=\eta_n}M^{(1)}(z)&=\res_{z=\eta_n}T(\infty)^{-\sigma_3}M(z)T(z)^{\sigma_3}\\
&=\lim_{z\rightarrow\eta_n}T(\infty)^{-\sigma_3}M(z)T(z)^{\sigma_3}T(z)^{-\sigma_3}
\left(\begin{array}{cc}
			0 & 0\\
			C_ne^{-2it\theta_n} & 0
		\end{array}\right)T(z)^{\sigma_3}\\
&=\lim_{z\rightarrow\eta_n}M^{(1)}(z)\left(\begin{array}{cc}
			0 & 0\\
			C_ne^{-2it\theta_n}T(\eta_n)^2 & 0
		\end{array}\right)T(z)^{\sigma_3}.
\end{align*}
On the other hand,
\begin{align*}
\res_{z=\overline{\eta}_n}M^{(1)}(z)&=\res_{z=\overline{\eta}_n}T(\infty)^{-\sigma_3}M(z)T(z)^{\sigma_3}\\
&=\lim_{z\rightarrow\overline{\eta}_n}T(\infty)^{-\sigma_3}M(z)T(z)^{\sigma_3}T(z)^{-\sigma_3}
\left(\begin{array}{cc}
			0 & -\overline{C}_ne^{2it\overline{\theta}_n}\\
			0 & 0
		\end{array}\right)T(z)^{\sigma_3}\\
&=\lim_{z\rightarrow\overline{\eta}_n}M^{(1)}(z)\left(\begin{array}{cc}
			0 & -\overline{C}_ne^{2it\overline{\theta}_n}T(\overline{\eta}_n)^{-2}\\
			0 & 0
		\end{array}\right)T(z)^{\sigma_3}.
\end{align*}
\end{proof}
\begin{rem}
Compared with $M(z)$ satisfied the RH problem \ref{RHP1}, the matrix-valued function $M^{(1)}(z)$ in \eqref{M1}  defined by function $T(z)$ in \eqref{T} has the following advantages:
\begin{enumerate}[(1)]
\item The factor Blaschke product appearing in the function $T$ in \eqref{T}
exchanges the column in which the poles $\eta_j$, $j\in\Delta$ appear and establishes the desired result that the oscillation term is bounded when time $t$ approaches infinity.
\item The triangular factors appearing in the matrix valued function $M^{(1)}(z)$ defined by \eqref{M1} trade the poles to the jumps on the boundaries of disk  $|z-\eta_j|=\epsilon_0$.
\item The factor $(\overline{\eta}^{-1}_nz-1)$ of the Blaschke product in function $T$ is different from that $(z-\overline{z}_n)$ in \cite{SandRNLS}, and the purpose of adding item $1/2s$ is to ensure that $T$ satisfies the symmetry \eqref{ST} in the lemma \ref{lem3}.
\end{enumerate}
\end{rem}
\subsection{Opening lenses}\quad
Now we in this section make continuous extension to the jump matrix $V^{(1)}(z)$ and remove the jump from the interval $\Sigma$ in such a way that the new problem makes full use of the growth and decay properties of the oscillation term $e^{2it\theta(z)}$ for $z\notin\Sigma$. In addition, in order to remove the poles in the problem, we open the lens in this way that the lens is far away from the previously introduced disk. To achieve this goal, we first introduce the following concepts.
Define $\Omega=\cup_{k=1}^{8}\Omega_{k}$ with
\begin{subequations}
\begin{align}
&\Omega_{1}=\{z:arg\in(0,\psi)\},\qquad \Omega_{2}=\{z:arg\in(\pi/2-\psi,\pi/2)\},\\
&\Omega_{3}=\{z:arg\in(\pi/2,\pi/2+\psi)\},\quad \Omega_{4}=\{z:arg\in(\pi-\psi,\pi)\},\\
&\Omega_{5}=\{z:arg\in(\pi,\pi+\psi)\},\quad \Omega_{6}=\{z:arg\in(3\pi/2-\psi,3\pi/2)\},\\
&\Omega_{7}=\{z:arg\in(3\pi/2,3\pi/2+\psi)\},\quad \Omega_{8}=\{z:arg\in(2\pi-\psi,2\pi)\},
\end{align}
\end{subequations}
where $\psi>0$ is a sufficiently small angle and satisfies
\begin{enumerate}[(1)]
\item $\frac{2|\xi-6|}{|\xi-6|+1}<\cos\psi<1$.
\item The regions $\Omega_{k}$  don't intersect with any disk $|z-\eta_n|=\epsilon_0$ and $|z-\overline{\eta}_n|=\epsilon_0$.
\end{enumerate}
Finally, the following contours are defined
\begin{subequations}
\begin{align}
&\Sigma_k=e^{(k-1)i\pi/4+\psi}\mathbb{R}^+,\hspace{0.5cm}k=1,3,5,7,\\
&\Sigma_k=e^{ki\pi/4-\psi}\mathbb{R}^+,\hspace{1.0cm}k=2,4,6,8,\\
&\widetilde{\Sigma}=\Sigma_1\cup\Sigma_2\ldots\cup\Sigma_{8},
\end{align}
\end{subequations}
which are shown in Fig. 5.\\

\centerline{
		\begin{tikzpicture}[node distance=2cm]
		\draw[gray, fill=gray!40] (0,0)--(0.5,3)--(-0.5,3)--(0,0)--(0.5,-3)--(-0.5,-3)--(0,0);
		\draw[gray, fill=gray!40] (0,0)--(3,-0.5)--(3,0.5)--(0,0)--(-3,-0.5)--(-3,0.5)--(0,0);
		\draw(0,0)--(3,0.5)node[above]{$\Sigma_1$};
		\draw(0,0)--(0.5,3)node[right]{$\Sigma_2$};
		\draw(0,0)--(-0.5,3)node[left]{$\Sigma_3$};
		\draw(0,0)--(-3,0.5)node[left]{$\Sigma_4$};
		\draw(0,0)--(-3,-0.5)node[left]{$\Sigma_5$};
		\draw(0,0)--(-0.5,-3)node[left]{$\Sigma_6$};
		\draw(0,0)--(0.5,-3)node[right]{$\Sigma_7$};
		\draw(0,0)--(3,-0.5)node[right]{$\Sigma_8$};
		\draw[->](-3.5,0)--(3.5,0)node[right]{ Re$z$};
		\draw[->](0,-3.5)--(0,3.5)node[above]{ Im$z$};
		\draw[-latex](0,0)--(-1.5,-0.25);
		\draw[-latex](0,0)--(-1.5,0.25);
		\draw[-latex](0,0)--(1.5,0.25);
		\draw[-latex](0,0)--(1.5,-0.25);
		\draw[-latex](0,0)--(0.25,-1.5);
		\draw[-latex](0,0)--(0.25,1.5);
		\draw[-latex](0,0)--(-0.25,1.5);
		\draw[-latex](0,0)--(-0.25,-1.5);
		\coordinate (I) at (0.2,0);
		\coordinate (C) at (-0.2,2.2);
		\fill (C) circle (0pt) node[above] {\small $\Omega_3$};
		\coordinate (E) at (0.2,2.2);
		\fill (E) circle (0pt) node[above] {\small $\Omega_2$};
		\coordinate (D) at (2.2,0.2);
		\fill (D) circle (0pt) node[right] {\small$\Omega_1$};
		\coordinate (F) at (-0.2,-2.2);
		\fill (F) circle (0pt) node[below] {\small$\Omega_6$};
		\coordinate (J) at (-2.2,-0.2);
		\fill (J) circle (0pt) node[left] {\small$\Omega_5$};
		\coordinate (k) at (-2.2,0.2);
		\fill (k) circle (0pt) node[left] {\small$\Omega_4$};
		\coordinate (J) at (0.2,-2.2);
		\fill (J) circle (0pt) node[below] {\small$\Omega_7$};
		\coordinate (k) at (2.2,-0.2);
		\fill (k) circle (0pt) node[right] {\small$\Omega_8$};
		\fill (I) circle (0pt) node[below] {$0$};
		\draw[red,  thick] (2,0) arc (0:360:2);
		\draw[blue] (2,3) circle (0.12);
		\draw[blue][->](0,0)--(-1.5,0);
		\draw[blue][->](-1.5,0)--(-2.8,0);
		\draw[blue][->](0,0)--(1.5,0);
		\draw[blue][->](1.5,0)--(2.8,0);
		\draw[blue][->](0,2.7)--(0,2.2);
		\draw[blue][->](0,1.6)--(0,0.8);
		\draw[blue][->](0,-2.7)--(0,-2.2);
		\draw[blue][->](0,-1.6)--(0,-0.8);
		\coordinate (A) at (2,3);
		\coordinate (B) at (2,-3);
		\coordinate (C) at (-0.5546996232,0.8320505887);
		\coordinate (D) at (-0.5546996232,-0.8320505887);
		\coordinate (E) at (0.5546996232,0.8320505887);
		\coordinate (F) at (0.5546996232,-0.8320505887);
		\coordinate (G) at (-2,3);
		\coordinate (H) at (-2,-3);
		\coordinate (I) at (2,0);
		\draw[blue] (2,-3) circle (0.12);
		\draw[blue] (-0.55469962326,0.8320505887) circle (0.12);
		\draw[blue] (0.5546996232,0.8320505887) circle (0.12);
		\draw[blue] (-0.5546996232,-0.8320505887) circle (0.12);
		\draw[blue] (0.5546996232,-0.8320505887) circle (0.12);
		\draw[blue] (-2,3) circle (0.12);
		\draw[blue] (-2,-3) circle (0.12);
		\coordinate (J) at (1.7320508075688774,1);
		\coordinate (K) at (1.7320508075688774,-1);
		\coordinate (L) at (-1.7320508075688774,1);
		\coordinate (M) at (-1.7320508075688774,-1);
		\fill (A) circle (1pt) node[right] {$z_n$};
		\fill (B) circle (1pt) node[right] {$\overline{z}_n$};
		\fill (C) circle (1pt) node[left] {$-\frac{1}{z_n}$};
		\fill (D) circle (1pt) node[left] {$-\frac{1}{\overline{z}_n}$};
		\fill (E) circle (1pt) node[right] {$\frac{1}{\overline{z}_n}$};
		\fill (F) circle (1pt) node[right] {$\frac{1}{z_n}$};
		\fill (G) circle (1pt) node[left] {$-\overline{z}_n$};
		\fill (H) circle (1pt) node[left] {$-z_n$};
		\fill (I) circle (1pt) node[above] {$1$};
		\fill (J) circle (1pt) node[right] {$\zeta_m$};
		\fill (K) circle (1pt) node[right] {$\overline{\zeta}_m$};
		\fill (L) circle (1pt) node[left] {$-\overline{\zeta}_m$};
		\fill (M) circle (1pt) node[left] {$-\zeta_m$};
		\end{tikzpicture}}
\centerline{\noindent {\small \textbf{Figure 5.} The gray region is $\Omega$. The blue circle constitute $\Sigma^{(2)}$ together. }}

\begin{prop}\label{pp2}
Let $\xi\in(5,7)$, and $G(s)=s+s^{-1}$ is a real-valued function. For $z=re^{i\psi}$, then the phase function satisfies the following estimation
\begin{align}
&Re(2it\theta)\leq\frac{t}{8}|\sin2\psi|(|\xi-6|+3)G^{2}(r),\quad z\in\Omega_{j},\quad j=1,3,5,7,\label{est1}\\
&Re(2it\theta)\geq-\frac{t}{8}|\sin2\psi|(|\xi-6|+3)G^{2}(r),\quad z\in\Omega_{j},\quad j=2,4,6,8.
\end{align}
\end{prop}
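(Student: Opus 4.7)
My approach will be to convert \eqref{Re} into polar coordinates and then perform a sector-by-sector sign analysis. Writing $z=re^{i\phi}$, one has $xy=\tfrac{r^{2}}{2}\sin(2\phi)$, $x^{2}-y^{2}=r^{2}\cos(2\phi)$ and $x^{2}+y^{2}=r^{2}$, so \eqref{Re} simplifies to the master identity
\[
\mathrm{Re}(2it\theta) \;=\; \frac{t}{2}\sin(2\phi)\Bigl[(\xi-6)(r^{2}+r^{-2}) + \cos(2\phi)(r^{4}+r^{-4})\Bigr],
\]
and the rest of the argument reduces to bounding this expression in terms of $G(r)=r+r^{-1}$.

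I will then analyze each sector in turn. In each of $\Omega_{1},\Omega_{3},\Omega_{5},\Omega_{7}$, the argument $\phi$ lies within $\psi$ of either the real axis or its reflection through $0$, so $\sin(2\phi)$ and $\cos(2\phi)$ have the same sign and the product $\sin(2\phi)\cos(2\phi)\geq 0$; in $\Omega_{2},\Omega_{4},\Omega_{6},\Omega_{8}$, $\phi$ lies within $\psi$ of the imaginary axis and the product has the opposite sign. In every case $|\sin(2\phi)|\leq|\sin(2\psi)|$, since $2\phi$ ranges over an interval of length at most $2\psi$ adjacent to a multiple of $\pi$. Combining this with $|\cos(2\phi)|\leq 1$, the AM--GM inequality $r^{4}+r^{-4}\geq r^{2}+r^{-2}$, and $|\xi-6|<1$, the bracket can be controlled by $(|\xi-6|+3)$ times an appropriate power of $G(r)$. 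Matching this against the right-hand side yields the upper bound on the odd-indexed sectors; the even-indexed bound follows by the same argument with a flipped sign.

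The hard part will be the calibration encoded in hypothesis (1). The factor $2|\xi-6|/(|\xi-6|+1)$ is tuned precisely so that at the critical radius $r=1$, where the ratio $(r^{4}+r^{-4})/(r^{2}+r^{-2})$ attains its minimum value $1$, the $\cos(2\phi)$-contribution still dominates the potentially-opposite $(\xi-6)$-correction uniformly in $\phi\in\Omega_{j}$. Making this uniform dominance rigorous --- verifying that the worst point of the bracket over the relevant range of $r$ lies within the claimed $G$-bound with constant $(|\xi-6|+3)/8$ --- is the one non-elementary step; once in place, the remainder of the proof is coefficient-matching.
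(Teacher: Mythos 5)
Your proposal takes essentially the same route as the paper's own proof: your polar-coordinate ``master identity'' is exactly the paper's \eqref{Ret}, the reduction to $G$ via $G^2(r)-2=r^4+r^{-4}$ is the same, and hypothesis (1) is deployed in precisely the same role of making the $\cos2\psi$-term dominate the $(\xi-6)$-correction. The one step you flag as non-elementary is dispatched in the paper in a single line: since $G(r)\geq 2$ one has $G(r)\leq\tfrac{1}{2}G^{2}(r)$, so the hypothesis gives $\tfrac{|\xi-6|}{\cos 2\psi}\,G(r)\leq\tfrac{|\xi-6|+1}{4}\,G^{2}(r)$, and substituting this into the bracket of \eqref{Ret} yields \eqref{est1}.
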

\begin{proof}
Taking $\Omega_1$ as an example, the other regions can be proved similarly.  It follows that from \eqref{Re}
\begin{align*}
Re(2it\theta)=tImzRez\left[(\xi-6)(1+|z|^{-4})+(Rez^2-Imz^2)(1+|z|^{-4})\right].
\end{align*}
Let $z=re^{i\psi}$, then one has
\begin{align}
Re(2it\theta)&=\frac{t}{2}r^2\sin2\psi\left[(\xi-6)(1+r^{-4})+r^2\cos2\psi(1+r^{-8})\right]\nonumber\\
&=\frac{t}{2}\sin2\psi\left[(\xi-6)(r^2+r^{-2})+\cos2\psi(r^4+r^{-4})\right].\label{Ret}
\end{align}
Additionally, from $G(r)=r^2+r^{-2}$ and  $G^2(r)=r^4+r^{-4}+2$ the above expression is further reduced to
\begin{align*}
Re(2it\theta)=\frac{t}{2}\sin2\psi\left[(\xi-6)G(r)+(G^2(r)-2)\cos2\psi\right].
\end{align*}
Observing that for $z\in\Omega_1$, $\frac{2|\xi-6|}{|\xi-6|+1}<\cos2\psi$, we then have
\begin{align*}
\frac{|\xi-6|}{\cos2\psi}G(r)\leq\frac{|\xi-6|+1}{4}G^2(r).
\end{align*}
Substituting the above formula into \eqref{Ret} yields the estimate \eqref{est1}.
\end{proof}

The estimation of proposition \ref{pp2} is the basis of continuous extension of triangular decomposition of jump matrix \eqref{jumpv1}. To do so, we need to extend the matrices $U_L,U_R$ and $W_L,W_R$  off the interval $\Sigma$, which is the content of the following proposition. Additionally, a sufficiently small positive constant $\upsilon_0$ is introduced to satisfy the equality $(1-\upsilon_0)\cos\psi>\frac{1}{2}$. Let $\mathcal {X}_1$ denotes the characterization function on the interval $(1-\upsilon_0,1+\upsilon_0)$ and $\mathcal {X}_0$ is the  characterization function  supporting in $(-\upsilon_0,\upsilon_0)$ with $\mathcal {X}_0(z)=\mathcal {X}_1(1+z)$. Now for brevity, define
\begin{align}
&\mathcal {H}_1=\mathcal {H}_5=\rho(z),\quad\qquad\quad~~ \mathcal {H}_4=\mathcal {H}_8=\widetilde{\rho}(z),\\
&\mathcal {H}_2=\mathcal {H}_6=\frac{\widetilde{\rho}(z)}{1-\widetilde{\rho}(z)\rho(z)},\quad
\mathcal {H}_3=\mathcal {H}_7=\frac{\rho(z)}{1-\widetilde{\rho}(z)\rho(z)},
\end{align}
and
\begin{align}
R^{(2)}(z)=\left\{\begin{array}{lll}
\left(\begin{array}{cc}
1 & R_j(z)e^{2it\theta}\\
0 & 1
\end{array}\right), & z\in \Omega_j,~~j=2,4,6,8,\\
\\
\left(\begin{array}{cc}
1 & 0\\
R_j(z)e^{-2it\theta} & 1
\end{array}\right),  &z\in \Omega_j,~~j=1,3,5,7,\\
\\
I,  &elsewhere.\\
\end{array}\right.\label{R2}
\end{align}

\begin{prop}\label{pp4}
$R_j$: $\overline{\Omega}_j\to C$, $j=1,2,..,8$ have boundary values as follow:
\begin{align}
&R_1(z)=\Bigg\{\begin{array}{ll}
-\rho(z)T(z)^{2} & z\in \mathbb{R}^+,\\
0  &z\in \Sigma_1,\\
\end{array} \hspace{0.6cm}
R_2(z)=\Bigg\{\begin{array}{ll}
0  &z\in \Sigma_2,\\
-\dfrac{\widetilde{\rho}(z) T_-(z)^{-2}}{1-\rho(z)\widetilde{\rho}(z)} &z\in  i\mathbb{R}^+,\\
\end{array} \\
&R_3(z)=\Bigg\{\begin{array}{ll}
\dfrac{\rho(z) T_+(z)^2}{1-\rho(z)\widetilde{\rho}(z)} &z\in i\mathbb{R}^+, \\
0 &z\in \Sigma_3,\\
\end{array}\hspace{0.6cm}
R_4(z)=\Bigg\{\begin{array}{ll}
0  &z\in \Sigma_4,\\
-\widetilde{\rho}(z)T(z)^{-2} &z\in  \mathbb{R}^-,\\
\end{array} \\
&R_5(z)=\Bigg\{\begin{array}{ll}
-\rho(z)T(z)^{2} &z\in  \mathbb{R}^-,\\
0  &z\in \Sigma_5,
\end{array} \hspace{0.5cm}
R_6(z)=\Bigg\{\begin{array}{ll}
0  &z\in \Sigma_6,\\
-\dfrac{\widetilde{\rho}(z) T_-(z)^{-2}}{1-\rho(z)\widetilde{\rho}(z)} &z\in  i\mathbb{R}^-,\\
\end{array} \\
&R_7(z)=\Bigg\{\begin{array}{ll}
\dfrac{\rho(z) T_+(z)^2}{1-\rho(z)\widetilde{\rho}(z)} &z\in i\mathbb{R}^-, \\
0  &z\in \Sigma_7,
\end{array} \hspace{0.5cm}
R_8(z)=\Bigg\{\begin{array}{ll}
0  &z\in \Sigma_8,\\
-\widetilde{\rho}(z)T(z)^{-2} &z\in  \mathbb{R}^+.\\
\end{array}
\end{align}	
For $z\in\Omega_k$, the matrices $R_k$ are of the following estimation
\begin{align}
\left|\overline{\partial}R_j(z)\right|\lesssim\left\{\begin{array}{ll}
|\mathcal {H}_j'(|z|)|+|z|^{-1/2}, & z\in \Omega_k,~~k=1,5,4,8,\label{dbarRj}\\
\\
|{H}_j'(i|z|)|+|z|^{-1/2}+|\overline{\partial}\mathcal {X}_1(|z|)|,  &z\in \Omega_k,~~k=2,3,6,7.\\
\end{array}\right.
\end{align}
Also in the small neighborhood of the singularities $\pm i$, there is the following estimate
\begin{align}
&|\overline{\partial}R_j(z)|\lesssim |z\mp i|,~~ z\in \Omega_k,~~k=2,3,6,7, \label{Ri}
\end{align}
and for $z\in$ elsewhere
\begin{align}
\overline{\partial}R_j(z)=0.
\end{align}
\end{prop}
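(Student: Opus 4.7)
The plan is to construct each $R_j$ by explicit smooth interpolation inside its sector $\Omega_j$: the boundary trace of $R_j$ on the piece of $\Sigma = \mathbb{R} \cup i\mathbb{R}$ bordering $\Omega_j$ should match the prescribed function (the appropriate $\mathcal{H}_j$ weighted by a power of $T$), while the trace on the diagonal ray $\Sigma_k$ should vanish. Working in polar coordinates $z = re^{i\phi}$ adapted to the sector, a convenient model is
\begin{equation*}
R_j(z) = \mathcal{H}_j(r_{\Sigma})\, T(z)^{\pm 2}\cos\!\left(\frac{\pi\phi}{2\psi}\right),
\end{equation*}
where $r_\Sigma$ is the projection of $z$ onto the nearby piece of $\Sigma$ and the sign of the exponent matches the prescribed trace. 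The cosine factor interpolates smoothly from $1$ on $\Sigma$ to $0$ on $\Sigma_k$, yielding the correct boundary values. For the sectors $\Omega_2,\Omega_3,\Omega_6,\Omega_7$ which border the imaginary axis and therefore approach the singular points $\pm i$ where $1-\rho\widetilde\rho$ can vanish, I would insert the cutoffs $\mathcal{X}_0$ and $\mathcal{X}_1$ to localize near and away from $\pm i$, writing $R_j$ in a neighborhood of $\pm i$ as $\mathcal{X}_0(|z\mp i|)(z\mp i)$ times a bounded smooth profile so that $R_j(\pm i) = 0$ and the refined estimate $|\bar{\partial} R_j|\lesssim |z\mp i|$ in \eqref{Ri} holds.

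Next I would compute $\bar{\partial} R_j$ via $\bar{\partial} = \tfrac{1}{2}e^{i\phi}(\partial_r + ir^{-1}\partial_\phi)$, which produces two types of contributions. First, radial derivatives of $\mathcal{H}_j$, directly bounded by $|\mathcal{H}_j'|$ using the $W^{1,\infty}$ hypothesis from Assumption~\ref{initialdata}. Second, angular derivatives of the cosine interpolant, of order $r^{-1}|\mathcal{H}_j(r_\Sigma) - \mathcal{H}_j(0)|$; the Sobolev embedding $|\mathcal{H}_j(s) - \mathcal{H}_j(0)| \lesssim |s|^{1/2}\|\mathcal{H}_j'\|_{L^2}$ then produces the $|z|^{-1/2}$ factor that appears in \eqref{dbarRj}. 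Derivatives of $T^{\pm 2}$ can be absorbed into the same two types of terms since $\log T$ is smooth away from $i\mathbb{R}$ by Lemma~\ref{lem3}, and the term $|\bar{\partial}\mathcal{X}_1(|z|)|$ enters directly as the third contribution in the estimate for the sectors $\Omega_{2,3,6,7}$. Outside of $\Omega$, the definition \eqref{R2} gives $R^{(2)}(z) = I$, so $\bar{\partial} R_j = 0$ automatically.

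The principal obstacle is the uniform treatment near the four singular points $\pm i$: there the denominator $1 - \rho\widetilde\rho$ degenerates, so one must subtract a first-order Taylor polynomial of the regular factor $T(z)^{\pm 2} e^{\pm 2it\theta}$ inside $\mathcal{H}_{2,3,6,7}$ in order that the resulting extension carry a controlled simple zero at $\pm i$ which cancels the degeneracy. Once this cancellation is organized using $\mathcal{X}_0$ as a smooth partition-of-unity building block and $\mathcal{X}_1$ to glue it to the bulk extension, the remaining computations reduce to direct bookkeeping in polar coordinates and yield both \eqref{dbarRj} and \eqref{Ri}. The prescribed boundary values are built into the construction at the outset, so the first half of the proposition will follow by inspection of the parametrization.
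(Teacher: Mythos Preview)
Your treatment of the non-singular sectors $\Omega_{1,4,5,8}$ matches the paper exactly: the cosine interpolant in polar coordinates, the polar form of $\bar\partial$, and the Cauchy--Schwarz bound $|\mathcal{H}_j(r)-\mathcal{H}_j(0)|\le r^{1/2}\|\mathcal{H}_j'\|_{L^2}$ producing the $|z|^{-1/2}$ term are precisely what the paper does.

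Where you diverge from the paper, and where your sketch is shaky, is the singular sectors $\Omega_{2,3,6,7}$. Two points. First, a minor confusion: the oscillatory factor $e^{\pm 2it\theta}$ is \emph{not} part of $R_j$ (it sits outside in the definition of $R^{(2)}$), so it should not appear in any Taylor subtraction inside $R_j$. Second, and more substantively, your mechanism of ``subtracting a first-order Taylor polynomial of the regular factor $T^{\pm 2}$'' does not address the actual difficulty: $T^{\pm 2}$ is already regular near $\pm i$, while the potential singularity lives in $\mathcal{H}_j=\widetilde\rho/(1-\rho\widetilde\rho)$ itself (through the $\gamma=1+z^{-2}$ in the scattering coefficients). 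Subtracting the Taylor expansion of a bounded factor cannot cancel that.

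The paper's fix is structurally different. It rewrites
\[
\mathcal{H}_2(z)\,T_-(z)^{-2}=\frac{\overline{s_{21}}(z)}{s_{11}(z)}\cdot\left(\frac{s_{11}(z)}{T(z)}\right)^{2}=:f(z)\,g(z),
\]
and uses Lemma~\ref{lem3}(6) to bound $g$ uniformly, so all potentially bad behavior at $\pm i$ is isolated in $f$. The extension near $|z|=1$ is then built with a \emph{radial} cutoff $\mathcal{X}_1(|z|)$ (not a cutoff in $|z\mp i|$ as you propose), an \emph{angular} cutoff $\mathcal{X}_0(\arg z/\delta_0)$, and an explicit sine correction term proportional to $|z|\,f'(|z|)$; the resulting $\bar\partial$ estimate near $\pm i$ is then read off from $|\sin[z_0(\pi/2-\phi)]|+|1-\mathcal{X}_0(\phi/\delta_0)|=\mathcal{O}(\phi)$, which gives \eqref{Ri}. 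This is the piece you would need to replace your Taylor-subtraction idea with to close the argument.
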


\begin{proof}
Unlike the initial values of the Schwarz space type presented in \cite{fNLS}, the proof is divided into two parts, one with no singularities and the other with singularities, since the determinant of the matrix $M(z)$ has singularities $\pm i$.

For case (i): No singularities as $z\in \overline{\Omega}_j$, $j=1,4,5,8$. Take $R_1(z)$ only as an example, the rest of the cases are similar.

Similar to the proposition 2.1 shown in \cite{McLaughlin-3},  the matrix $R_1(z)$ can be expressed as
\begin{align}
R_1(z)=\mathcal {H}_1(|z|)T^2(z)\cos(z_0 \arg z),\hspace{0.5cm}z_0=\frac{2\pi}{\psi}.
\end{align}
where the function $\mathcal {H}_1(|z|)$ is bounded, from which we obtain the $\overline{\partial}$ derivatives
\begin{align}\label{est-R1}
\overline{\partial}R_1(z)=\frac{e^{i\tau}}{2}T^2(z)\left(\mathcal {H}_1'(r)\cos(z_0\tau)-\frac{i}{r}\mathcal {H}_1(r)z_0\sin(z_0\tau) \right),
\end{align}
where we denote $r$ as the mode of $z$ with $z=re^{i\tau}$ and the relationship $\overline{\partial}=\frac{e^{i\tau}}{2}\left(\partial_r+\frac{i}{r} \partial_\tau\right) $ is derived.
It is observed that the matrix function $T(z)$ is bounded in region $\overline{\Omega}_1$. Therefore, in order to estimate the above equation it is necessary to further consider the  second term of the left-hand side of equation \eqref{est-R1}. Employing the Cauchy-Schwarz inequality yields
\begin{align}
|\mathcal {H}_1(r)|= |\rho(r)|= |\rho(r)-\rho(0)|=\left|\int_{0}^r\rho'(s)ds\right|\leq \parallel \rho'(s)\parallel_{L^2} r^{1/2}.
\end{align}
Then the boundedness of \eqref{dbarRj}  follows immediately.

For case (ii): Existence of singularity as $z\in \overline{\Omega}_j$, $j=2,3,6,7.$ Again, only one case $R_2(z)$ is considered, and the rest are proved using the same method.

As observed on \eqref{s11} and \eqref{s22}, the scattering data $s_{11}(z)$ and $s_{22}(z)$ are of singularity at $\pm i$. This suggest that the matrix $R_2(z)$ also has singularities. Therefore, in the estimation of $R_2(z)$, it is necessary to control the behavior at the singularity. Fix a small constant $\delta_0$ stratifying $\varphi>\delta_0\epsilon_0$, the function $R_2(z)$ in $\Omega_2$ can be written as
\begin{align*}
R_2(z)=-\dfrac{\widetilde{\rho}(z)}{1-\rho(z)\widetilde{\rho}(z)} T_-(z)^{-2}=\frac{\overline{s_{21}}(z)}{s_{11}(z)}
\left(\frac{s_{11}(z)}{T_{-}(z)}\right)^2,
\end{align*}
further we denote
\begin{align}
R_2(z)=R_{21}(z)+R_{22}(z),
\end{align}
with
\begin{align*}
R_{21}(z)&=[1-\mathcal {X}_1(|z|)]\mathcal {H}_2(i|z|)T^{-2}(z)\cos[k_0(\frac{\pi}{2}-\arg z)],\\
R_{22}(z)&=f(|z|)g(z)\cos[z_0(\frac{\pi}{2}-\arg z)]
-\frac{i|z|}{z_0}\mathcal {X}_0(\frac{\arg z}{\delta_0})f'(|z|)g(z)\sin[z_0(\frac{\pi}{2}-\arg z)],
\end{align*}	
where $f'(s)$ is the derivative of the $f(s)$ and
\begin{align}
f(z)=\mathcal {X}_1(z)\frac{\overline{s_{21}}(z)}{s_{11}(z)},\hspace{0.5cm}g(z)=\left( \frac{s_{11}(z)}{T(z)}\right) ^2.
\end{align}

In what follows, we will estimate the $\overline{\partial}$ derivatives of the expressions $R_{21}(z)$ and $R_{22}(z)$. Consider  $R_{21}(z)$ firstly, similar to the case (i), let $z=re^{i\phi}$, then one has
\begin{align}
|\overline{\partial}R_{21}(z)|\lesssim (1-\mathcal {X}_1(r))\left( |\mathcal {H}_2'(ir)|+r^{-1/2}\right)+|\overline{\partial}\mathcal {X}_1(r)|,
\end{align}
by using the fact $R_{21}(z)\equiv0$ in support of $\mathcal {X}_1$, $R_{22}(z)\equiv0$ out support of $\mathcal {X}_1$ and  for $z$ out of supp$(X_1)$
\begin{align*}
|\mathcal {H}_2(z)|=\left|\dfrac{\widetilde{\rho}(z) }{1-\rho(z)\widetilde{\rho}(z)}\right|=|\dfrac{\widetilde{\rho}(z) }{1-|\rho(z)|^2}|\lesssim |\rho(z)|.
\end{align*}

Now we turn to $\overline{\partial}R_{22}(z)$. Let $z=re^{i\phi}$ as before, and $\overline{\partial}=\frac{e^{i\phi}}{2}
\left(\partial_r+\frac{i}{r}\partial_\phi\right)$, Then
\begin{align}
\overline{\partial}R_{22}(z)=&\frac{e^{i\phi}}{2}g(z) \cos[z_0(\frac{\pi}{2}-\phi)]f'(ir)\left( 1-\mathcal {X}_0(\frac{\phi}{\delta_0})\right)-\frac{i}{z_0}\sin[z_0(\frac{\pi}{2}-\phi)] \nonumber\\
&\mathcal {X}_0(\frac{\arg z}{\delta_0})(rf'(ir))'
+ \sin[z_0(\frac{\pi}{2}-\phi)]\left[\frac{iz_0}{r}f(ir)+\frac{1}{\delta_0 z_0}\mathcal {X}_0'(\frac{\phi}{\delta_0})f'(ir) \right].
\end{align}
Similar to the lemma 6.5 shown in \cite{SandRNLS}, the three terms on the right side of the equation can be estimated separately to obtain the estimation of $|\overline{\partial}R_{22}(z)|$, therefore the results \eqref{dbarRj} is proved.
Finally for $z\sim i$,
\begin{equation}
|\overline{\partial}R_{22}(z)|\lesssim \left|\sin[z_0(\frac{\pi}{2}-\phi)]\right|+\left|1-\mathcal {X}_0(\phi/\delta_0)\right|=\mathcal{O}(\phi),
\end{equation}
from which \eqref{Ri}  follows immediately.
\end{proof}
Additionally, the matrix $R^{(2)}(z)$ satisfies symmetry
\begin{align} R^{(2)}(z)=\sigma_2\overline{R^{(2)}(\bar{z})}\sigma_2=\sigma_1\overline{R^{(2)}(-\bar{z})}\sigma_1
=\sigma_3Q_-R^{(2)}(-1/z)\sigma_3Q_-.
\end{align}

A new matrix-valued function $M^{(2)}(z)=M^{(2)}(x,t;z)$ with no jumps on the contour $\Sigma$ is introduced via the matrix $R^{(2)}(z)$
\begin{align}\label{M2}
M^{(2)}(z)=M^{(1)}(z)R^{(2)}(z),
\end{align}
which follows the RH problem \ref{RHP3}.
\begin{RHP}\label{RHP3}
Find a $2\times2$ matrix valued function $M^{(2)}(z)$, which is continuous in $\mathbb{C}$, sectionally continuous first partial derivatives in $\mathbb{C}\backslash\Sigma^{(2)}\cup\{\eta_n,\overline{\eta}_n\}_{n\in\Lambda}$ satisfying that
\begin{enumerate}[(1)]
\item The symmetry:
\begin{align*}
M^{(2)}(x,t;z)=\sigma_2\overline{M^{(2)}(x,t;\overline{z})}\sigma_2=\sigma_1\overline{M^{(2)}(x,t;-\overline{z})}\sigma_1
    =\frac{i}{z}M^{(2)}(x,t;-1/z)\sigma_3Q_-.
\end{align*}
\item Take the continuous boundary values $M^{(2)}_+(x,t;z)$ $($respectively $M^{(2)}_-(x,t;z)$$)$ on $\Sigma^{(2)}$ from left (respectively right), then
\begin{align}
M^{(2)}_+(x,t;z)=M^{(2)}_-(x,t;z)V^{(2)}(z),\quad z\in\Sigma^{(2)},
\end{align}
where
\begin{align}\label{jumpv2}
	V^{(2)}(z)=\left\{ \begin{array}{ll}
		\left(\begin{array}{cc}
			1 & 0\\
			-C_n(z-\eta_n)^{-1}T^2(z)e^{-2it\theta_n} & 1
		\end{array}\right),   &\text{as } 	|z-\eta_n|=\epsilon_0,~~n\in\nabla\setminus\Lambda,\\[12pt]
		\left(\begin{array}{cc}
			1 & -C_n^{-1}(z-\eta_n)T^{-2}(z)e^{2it\theta_n}\\
			0 & 1
		\end{array}\right),   &\text{as } |z-\eta_n|=\epsilon_0,~~n\in\Delta,\\
		\left(\begin{array}{cc}
			1 & \overline{C}_n(z-\overline{\eta}_n)^{-1}T^{-2}(z)e^{2it\overline{\theta}_n}\\
			0 & 1
		\end{array}\right),   &\text{as } 	|z-\overline{\eta}_n|=\epsilon_0,~~n\in\nabla\setminus\Lambda,\\
		\left(\begin{array}{cc}
			1 & 0	\\
			\overline{C}_n^{-1}(z-\overline{\eta}_n)e^{-2it\bar{\theta}_n}T^2(z) & 1
		\end{array}\right),   &\text{as } |z-\overline{\eta}_n|=\epsilon_0,~~n\in\Delta.\\
	\end{array}\right.
\end{align}
\item Residue conditions: $M^{(2)}(x,t;z)$ has simple poles at each point $\eta_n$ and $\overline{\eta}_n$ for $n\in\Lambda$ with:
\begin{align}
	&\res_{z=\eta_n}M^{(2)}(x,t;z)=\lim_{z\to \eta_n}M^{(2)}(x,t;z)\left(\begin{array}{cc}
		0 & 0\\
		C_ne^{-2it\theta_n}T^2(\eta_n) & 0
	\end{array}\right),\\
	&\res_{z=\overline{\eta}_n}M^{(2)}(x,t;z)=\lim_{z\to \overline{\eta}_n}M^{(2)}(x,t;z)\left(\begin{array}{cc}
		0 & -\overline{C}_nT^{-2}(\overline{\eta}_n)e^{2it\overline{\theta}_n}\\
		0 & 0
	\end{array}\right).
\end{align}
\item The asymptotic behavior
\begin{align*}
&M^{(2)}(x,t;z)=e^{i\nu_{-}(x,q)\sigma_{3}}+\mathcal {O}(z^{-1}),\quad\qquad z\rightarrow\infty,\\
&M^{(2)}(x,t;z)=\frac{1}{z}e^{i\nu_{-}(x,q)\sigma_{3}}\sigma_{3}Q_{-}+\mathcal {O}(1),\quad z\rightarrow0.
\end{align*}
\item $\overline{\partial}$-derivative: for $z\in\mathbb{C}$
\begin{align}\label{Wz}
\overline{\partial}M^{(2)}(x,t;z)=M^{(2)}(x,t;z)W(z),
\end{align}
where
\begin{equation}
W(z)=\left\{\begin{array}{lll}
\left(\begin{array}{cc}
0 & \overline{\partial}R_j(z)e^{2it\theta}\\
0 & 0
\end{array}\right), & z\in \Omega_j,~~j=1,3,5,7,\\
\\
\left(\begin{array}{cc}
0 & 0\\
\overline{\partial}R_j(z)e^{-2it\theta} & 0
\end{array}\right),  &z\in \Omega_j,~~j=2,4,6,8,\\
\\
0,  &elsewhere.\\
\end{array}\right. \label{DBARR2}
\end{equation}
\end{enumerate}
\end{RHP}
\subsection{The $\mathcal {N}$-soliton solutions}\quad
The next step is to decompose the mixed ($\overline{\partial}$-) RH problem \ref{RHP3}, that is, remove the soliton component  corresponding to $M^{(sol)}(x,t;z)$ from the RH problem \ref{RHP3}, and a pure $\overline{\partial}$-problem with non-zero $\overline{\partial}$-derivative, which can be analyzed by using the known $\overline{\partial}$-technique. Consider the pure RH problem satisfied by the $M^{(sol)}(x,t;z)$ with the $W(z)\equiv0$.
\begin{RHP}\label{RHP4}
Find a $2\times2$ matrix valued function $M^{(sol)}(x,t;z)$, which is meromorphic function in $\mathbb{C}\backslash\Sigma^{(2)}$ satisfying that
\begin{enumerate}[(1)]
\item The symmetry:
\begin{align*}
M^{(sol)}(x,t;z)=\sigma_2\overline{M^{(sol)}(x,t;\overline{z})}\sigma_2=\sigma_1
\overline{M^{(sol)}(x,t;-\overline{z})}\sigma_1
    =\frac{i}{z}M^{(sol)}(x,t;-1/z)\sigma_3Q_-.
\end{align*}
\item Take the continuous boundary values $M^{(sol)}_+(x,t;z)$ $($respectively $M^{(sol)}_-(x,t;z))$ on $\Sigma^{(2)}$ from left $($respectively right$)$, then
\begin{align}
M^{(sol)}_+(x,t;z)=M^{(sol)}_-(x,t;z)V^{(2)}(z),\quad z\in\Sigma^{(2)},
\end{align}
where
\begin{align}
	V^{(2)}(z)=\left\{ \begin{array}{ll}
		\left(\begin{array}{cc}
			1 & 0\\
			-C_n(z-\eta_n)^{-1}T^2(z)e^{-2it\theta_n} & 1
		\end{array}\right),   &\text{as } 	|z-\eta_n|=\epsilon_0,~~n\in\nabla\setminus\Lambda,\\[12pt]
		\left(\begin{array}{cc}
			1 & -C_n^{-1}(z-\eta_n)T^{-2}(z)e^{2it\theta_n}\\
			0 & 1
		\end{array}\right),   &\text{as } |z-\eta_n|=\epsilon_0,~~n\in\Delta,\\
		\left(\begin{array}{cc}
			1 & \overline{C}_n(z-\overline{\eta}_n)^{-1}T^{-2}(z)e^{2it\bar{\theta}_n}\\
			0 & 1
		\end{array}\right),   &\text{as } 	|z-\overline{\eta}_n|=\epsilon_0,~~n\in\nabla\setminus\Lambda,\\
		\left(\begin{array}{cc}
			1 & 0	\\
			\overline{C}_n^{-1}(z-\overline{\eta}_n)e^{-2it\bar{\theta}_n}T^2(z) & 1
		\end{array}\right),   &\text{as } 	|z-\overline{\eta}_n|=\epsilon_0,~~n\in\Delta.\\
	\end{array}\right.
\end{align}
\item Residue conditions: $M^{(sol)}(x,t;z)$ has simple poles at each point $\eta_n$ and $\overline{\eta}_n$ for $n\in\Lambda$ with:
\begin{align}
	&\res_{z=\eta_n}M^{(sol)}(x,t;z)=\lim_{z\to \eta_n}M^{(sol)}(x,t;z)\left(\begin{array}{cc}
		0 & 0\\
		C_ne^{-2it\theta_n}T^2(\eta_n) & 0
	\end{array}\right),\\
	&\res_{z=\overline{\eta}_n}M^{(sol)}(x,t;z)=\lim_{z\to \overline{\eta}_n}M^{(sol)}(x,t;z)\left(\begin{array}{cc}
		0 & -\overline{C}_nT^{-2}(\overline{\eta}_n)e^{2it\overline{\theta}_n}\\
		0 & 0
	\end{array}\right).
\end{align}
\item
$M^{(sol)}(x,t;z)=e^{i\nu_{-}(x,q)\sigma_{3}}+\mathcal {O}(z^{-1}),\quad\qquad z\rightarrow\infty,$\\
$M^{(sol)}(x,t;z)=\frac{1}{z}e^{i\nu_{-}(x,q)\sigma_{3}}\sigma_{3}Q_{-}+\mathcal {O}(1),\quad z\rightarrow0.$
\item $\bar{\partial}$-Derivative:  $W(z)=0$, for $ z\in \mathbb{C}$.
\end{enumerate}
\end{RHP}

The following proposition \ref{pp-deng} will show the correlation between the existence and uniqueness of the solutions of the RH problem \ref{RHP4} and the original RH problem \ref{RHP1}.
\begin{prop}\label{pp-deng}
For the scattering data $\left\lbrace  r(z),\left\lbrace \eta_n,C_n\right\rbrace_{n\in\Lambda}\right\rbrace$ corresponding to the RH problem \ref{RHP3}, there exists a unique solution $M^{(sol)}(z)$ of RH problem \ref{RHP4} $($corresponding to the RH problem \ref{RHP3} with $W(z)\equiv0$$)$, and it is equivalent, by a transformation, to the reflection-less solution of the original RH problem \ref{RHP1} with the modified scattering data
$\left\lbrace  0,\left\lbrace \eta_n,\mathring{c}_n\right\rbrace_{n\in\Lambda}\right\rbrace$, where
\begin{align}
\mathring{c}_n(x,t)=C_n\exp\left\lbrace-\frac{1}{i\pi}\int_{\mathbb{R}}\log(1-|\rho(s)|^2)
\left(\frac{1}{s-\eta_n}-\frac{1}{2s} \right)  \right\rbrace,
\end{align}
where $\rho(z)$ is reflection coefficient defined by \eqref{reflection}.
\end{prop}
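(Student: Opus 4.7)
The plan is to establish both uniqueness and the claimed equivalence by inverting, in two stages, the transformations $M\leftrightarrows M^{(1)}\leftrightarrows M^{(2)}$ that produced RH problem \ref{RHP4}, and then recognizing the residue data of the resulting reflection-less problem as coming from modified norming constants $\mathring{c}_n$.

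For uniqueness, I first note that each jump matrix in \eqref{jumpv2} is unipotent upper- or lower-triangular, so $\det V^{(2)}\equiv 1$. Combined with the triangular residue conditions, a standard check shows that $\det M^{(sol)}(z)=1+z^{-2}$ is inherited from $\det M(z)$, so $M^{(sol)}$ is invertible away from $\pm i$. Given two solutions $M_1,M_2$, the ratio $R=M_1 M_2^{-1}$ has no jump across $\Sigma^{(2)}$, its apparent simple poles at $\eta_n,\bar\eta_n$, $n\in\Lambda$, cancel using the triangular residue conditions, and $R(z)\to I$ as $z\to\infty$; by Liouville $R\equiv I$. The symmetries in item (1) of RH \ref{RHP4} are preserved throughout, which makes the uniqueness argument self-consistent.

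For the equivalence, the first step is to undo the disk jumps. I define $\tilde M(z)=M^{(sol)}(z)\,G(z)^{-1}$ where $G(z)$ is precisely the upper/lower triangular matrix appearing in \eqref{M1} on each disk $|z-\eta_n|<\epsilon_0$ or $|z-\bar\eta_n|<\epsilon_0$ for $n\in(\nabla\setminus\Lambda)\cup\Delta$, and $G(z)=I$ elsewhere; this converts every circle jump in \eqref{jumpv2} back into a simple pole with residue determined by $C_n$ and $T^{\pm 2}(\eta_n)$ at the appropriate pole. The second step is to undo the $T$-conjugation by writing $T(z)=T_B(z)\,\delta(z)$, with $T_B$ the Blaschke product of \eqref{T} and $\delta$ the exponential factor, and setting $M^\sharp(z)=T(\infty)^{\sigma_3}\tilde M(z) T(z)^{-\sigma_3}$. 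By construction $M^\sharp$ is meromorphic on $\mathbb{C}\setminus(\mathbb{R}\cup i\mathbb{R})$ with simple poles at every discrete spectrum point and no jump on $\mathbb{R}\cup i\mathbb{R}$, since the $T$-jump \eqref{TJ} is absent when the reflection coefficient is identically zero. Comparing the resulting data with RH problem \ref{RHP1} specialized to $\rho\equiv 0$ shows that $M^\sharp$ solves the reflection-less version of the original problem provided the norming constants are reset to $\mathring{c}_n=C_n\,\delta^2(\eta_n)$ (together with the Blaschke factors, which are reabsorbed when the Blaschke product is removed from $T$). A direct evaluation of $\delta^2(\eta_n)$ from the definition in Lemma \ref{lem3}, together with the trace identity \eqref{trace} and the symmetries \eqref{RS}, produces the exponential appearing in the statement of the proposition. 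Existence of $M^\sharp$ for the modified data is then the classical finite-dimensional Cauchy/Vandermonde construction of pure $N$-soliton solutions already used in \cite{Yang-PD-2021}, and reversing the two steps yields $M^{(sol)}$.

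The main obstacle will be Step 3, the bookkeeping that identifies $\delta^2(\eta_n)$ with the precise exponential factor claimed for $\mathring{c}_n$ while respecting the $\mathbb{Z}_2\times\mathbb{Z}_2$ symmetry inherited from \eqref{mu-asy} and \eqref{RS}. In particular, one must check that the Blaschke-type factor $(\bar\eta_n^{-1}z-1)$ used in \eqref{T} (which differs from the standard $(z-\bar\eta_n)$ normalization of \cite{SandRNLS}), together with the regularizing term $-1/(2s)$ in the definition of $\delta$, is exactly what is needed so that the modified norming constants transform consistently under $z\mapsto\bar z$, $z\mapsto -\bar z$, and $z\mapsto -1/z$; only then will the reconstructed $M^\sharp$ satisfy all symmetries of RH problem \ref{RHP1}, and only then does the stated formula for $\mathring{c}_n$ deliver a genuine reflection-less solution rather than an inconsistent one.
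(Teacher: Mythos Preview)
Your strategy matches the paper's: undo the interpolation and the $T$-conjugation to land on a reflection-less copy of RH problem~\ref{RHP1} with modified norming constants, then quote existence/uniqueness from \cite{SandRNLS,Yang-PD-2021}. The paper packages both steps into the single formula
\[
M^{(\sharp)}(z)=\Bigl(\textstyle\prod_{n\in\Delta}\eta_n\Bigr)^{-\sigma_3} M^{(sol)}(z)\,T(z)^{-\widehat\sigma_3}\bigl[F^{-1}(z)\bigr]\,\Bigl(\textstyle\prod_{n\in\Delta}\dfrac{z-\eta_n}{\bar\eta_n^{-1}z-1}\Bigr)^{-\sigma_3}
\]
and then verifies the residue conditions directly.

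There is, however, a concrete slip in your Step~1. The triangular factors $G$ in \eqref{M1} sit \emph{before} the right multiplication by $T^{\sigma_3}$, whereas the disk jumps of $M^{(sol)}$ in \eqref{jumpv2} already carry the extra factor $T^{\pm2}$. Hence $V^{(2)}G^{-1}$ is not the identity: it leaves a residual off-diagonal entry proportional to $(1-T^{\pm2})$, so $\tilde M=M^{(sol)}G^{-1}$ still jumps on every circle. The correct move is to conjugate first, i.e.\ use $T^{-\sigma_3}G^{-1}T^{\sigma_3}$ (the paper's $T^{-\widehat\sigma_3}[F^{-1}]$), or equivalently swap the order of your two steps. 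You also need to insert the explicit Blaschke correction $\bigl(\prod_{n\in\Delta}\frac{z-\eta_n}{\bar\eta_n^{-1}z-1}\bigr)^{-\sigma_3}$ on the right and the constant $\bigl(\prod_{n\in\Delta}\eta_n\bigr)^{-\sigma_3}$ on the left; without them $M^\sharp$ inherits spurious poles and zeros at $\{\eta_n,\bar\eta_n\}_{n\in\Delta}$ from $T^{-\sigma_3}$ and fails the normalisation at infinity---your phrase ``reabsorbed when the Blaschke product is removed'' hides exactly this. With those adjustments your identification $\mathring c_n=C_n\,\delta^2(\eta_n)$ and the remainder of the argument go through as written.
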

\begin{proof}
In order to match the $M^{(sol)}(z)$ and the solution of the original RH problem \ref{RHP1}, the jump conditions and poles need to be restored. It follows from \eqref{M1} and \eqref{M2} that
\begin{align}\label{Mj}
M^{(\sharp)}(x,t;z)=\left(\prod_{n\in \Delta}\eta_n \right)^{-\sigma_3} M^{(sol)}(z)T(z)^{-\widehat{\sigma}_3}F^{-1}(z)\left( \prod_{n\in \Delta}\dfrac{z-\eta_n}{\overline{\eta}_n^{-1}z-1}\right) ^{-\sigma_3},
\end{align}
where
\begin{align}
F(z)=\left\{ \begin{array}{ll}
		\left(\begin{array}{cc}
			1 & 0\\
			-C_n(z-\eta_n)^{-1}e^{-2it\theta_n} & 1
		\end{array}\right),   &\text{as } |z-\eta_n|=\epsilon_0,~~n\in\nabla\setminus\Lambda,\\[12pt]
		\left(\begin{array}{cc}
			1 & -C_n^{-1}(z-\eta_n)e^{2it\theta_n}\\
			0 & 1
		\end{array}\right),   &\text{as } |z-\eta_n|=\epsilon_0,~~n\in\Delta,\\
		\left(\begin{array}{cc}
		1 & \overline{C}_n(z-\overline{\eta}_n)^{-1}e^{2it\overline{\theta}_n}\\
		0 & 1
		\end{array}\right),   &\text{as } 	|z-\overline{\eta}_n|=\epsilon_0,~~n\in\nabla\setminus\Lambda,\\
		\left(\begin{array}{cc}
		1 & 0	\\
		\overline{C}_n^{-1}(z-\overline{\eta}_n)e^{-2it\overline{\theta}_n} & 1
		\end{array}\right),   &\text{as } 	|z-\overline{\eta}_n|=\epsilon_0,~~n\in\Delta.
	\end{array}\right.
\end{align}
It's clear that the transformation $M^{(\sharp)}(x,t;z)$ ensures the normalization conditions at origin and infinity. By comparing \eqref{jumpv2} and \eqref{Mj}, we find that $M^{(\sharp)}(x,t;z)$  has no jump conditions. It follows from \eqref{M1} that for $n\notin\Lambda$ the transformation \eqref{Mj} restores the jump conditions on disks $|z-\eta_n|=\epsilon_0$ and $|z-\overline{\eta}_n|=\epsilon_0$. The following will show that $M^{(\sharp)}(x,t;z)$ satisfies the residue condition of the original RH problem \ref{RHP1}, where parameter $C_n$ is replaced by $\mathring{c}_n$. Taking $\eta_n$ as an example for $n\in\Lambda$, another case is similarly provable.
\begin{align}
\res_{z=\eta_n}&M^{(\sharp)}(x,t;z)=\left(\prod_{n\in \Delta}\eta_n \right)^{-\sigma_3}\res_{z=\eta_n}M^{(sol)}(x,t;z)T^{-\widehat{\sigma}_3}G(z)^{-1}\left( \prod_{n\in \Delta}\dfrac{z-\eta_n}{\overline{\eta}_n^{-1}z-1}\right) ^{-\sigma_3}\nonumber\\
	=&\lim_{z\to \eta_n}-\left(\prod_{n\in \Delta}\eta_n \right)^{-\sigma_3}M^{(sol)}(x,t;z)\left(\begin{array}{cc}
		0 & 0\\
		C_ne^{-2it\theta_n}T^2(\eta_n) & 0
\end{array}\right)\left( \prod_{n\in \Delta}\dfrac{z-\eta_n}{\overline{\eta}_n^{-1}z-1}\right) ^{-\sigma_3}\nonumber\\
		=&\lim_{z\to \eta_n}M^{(\sharp)}(x,t;z)\left(\begin{array}{cc}
			0 & 0\\
			\mathring{c}_ne^{-2it\theta_n} & 0
\end{array}\right).\nonumber
\end{align}
The analyticity and symmetry of function $M^{(\sharp)}(x,t;z)$ can be deduced from the related properties of $M^{(sol)}(x,t;z)$, $T(z)$ and $F(z)$. In conclusion, $M^{(sol)}(x,t;z)$ is the solution of the original RH problem \eqref{RHP1} without reflection. Additionally, the existence and uniqueness of the solution can be similar to that in appendix A of reference \cite{SandRNLS}. Finally, we complete the proof of the proposition.
\end{proof}

Although $M^{(sol)}(x,t;z)$ exists and is unique, it can not give an exact expression.  In what follows, we use a new solution $M_{\Lambda}^{(sol)}(x,t;z)$ to approximate $M^{(sol)}(x,t;z)$, and the resulting error can be characterized by the small norm theory of RH problem \cite{small-1,small-2}. Observing that the jump matrix $V^{(2)}(z)$ generated by the RH problem \ref{RHP4} such that the following estimation
\begin{align*}
\left|\left|V^{(2)}(z)-I\right|\right|_{L^{2}(\Sigma^{(2)})}=\left|C_{n}(z-\zeta_n)^{-1}T(z)^{2}e^{-2it\theta_n}
\right|\lesssim e^{Re(2it\theta)}\lesssim \mathcal {O}(e^{-2\varrho_0t}),
\end{align*}
which gives rise to that for $1\leq p\leq+\infty$, the jump matrix $V^{(2)}(z)$ admits
\begin{align}\label{est-V2}
\left|\left|V^{(2)}(z)-I\right|\right|_{L^{2}(\Sigma^{(2)})}\leq K_pe^{-2\varrho_0t},
\end{align}
where $K_p$ is a positive constant and dependent on $p$. The expression \eqref{est-V2} implies that the  jump matrix $V^{(2)}(z)$ converges uniformly to the identity matrix, which means that the contribution of the jump matrix to the long-time asymptotic behavior of the solution is meaningless. On the contrary, the dominant contribution to the long-time asymptotic behavior of solutions comes from the simple poles of $M^{(sol)}(x,t;z)$, i.e., the simple poles $\zeta_n$ for $n\in\Lambda$.  Based on this idea, consider the following decomposition
\begin{align}\label{ML}
M^{(sol)}(x,t;z)=M^{(err)}(z)M_\Lambda^{(sol)}(x,t;z),
\end{align}
where $M^{(err)}(z)$ denotes the error function, which is determined by the small norm theory of RH problem, and $M_\Lambda^{(sol)}(x,t;z)$ is the solution of  the RH problem \ref{RHP4} with $V^{(2)}(z)\equiv0$.
\begin{RHP}\label{RHP5}
Find a matrix-valued function $M_\Lambda^{(sol)}(x,t;z)$ satisfies
\begin{enumerate}[(1)]
\item The analyticity: $M_\Lambda^{(sol)}(x,t;z)$ is analytic in $\mathbb{C}\backslash\{\eta_n,\overline{\eta}_n\}$.
\item The symmetries:
\begin{align}\label{SymML}
M_\Lambda^{(sol)}(x,t;z)=\sigma_2\overline{M^{(sol)}_\Lambda(x,t;\overline{z})}\sigma_2=\sigma_1
\overline{M^{(sol)}_\Lambda(x,t;-\overline{z})}\sigma_1=
\frac{i}{z}M^{(sol)}_\Lambda(x,t;-1/z)\sigma_3Q_-.
\end{align}
\item Residue conditions:  $M_\Lambda^{(sol)}(x,t;z)$ has simple poles at each point $\eta_n$ and $\overline{\eta}_n$ for $n\in\Lambda$ with:
\begin{align}
	&\res_{z=\eta_n}M_\Lambda^{(sol)}(x,t;z)=\lim_{z\to \eta_n}M_\Lambda^{(sol)}(x,t;z)\left(\begin{array}{cc}
		0 & 0\\
		C_ne^{-2it\theta_n}T^2(\eta_n) & 0
	\end{array}\right),\\
	&\res_{z=\overline{\eta}_n}M_\Lambda^{(sol)}(x,t;z)=\lim_{z\to \overline{\eta}_n}M_\Lambda^{(sol)}(x,t;z)\left(\begin{array}{cc}
		0 & -\overline{C}_nT^{-2}(\overline{\eta}_n)e^{2it\overline{\theta}_n}\\
		0 & 0
	\end{array}\right).
\end{align}
\item The asymptotic behavior:
\begin{align*}
&M_\Lambda^{(sol)}(x,t;z)=e^{i\nu_{-}(x,t;q)\sigma_{3}}+\mathcal {O}(z^{-1}),\quad\qquad z\rightarrow\infty,\\
&M_\Lambda^{(sol)}(x,t;z)=\frac{1}{z}e^{i\nu_{-}(x,t;q)\sigma_{3}}\sigma_{3}Q_{-}+\mathcal {O}(1),\quad z\rightarrow0.
\end{align*}
\end{enumerate}
\end{RHP}

For convenience, we omit the independent variables $x$ and $t$,  denoting $M_\Lambda^{(sol)}(x,t;z)=M_\Lambda^{(sol)}(z)$. It follows any solution satisfying RH problem \ref{RHP5} from the symmetry \eqref{SymML} must have the following partial fractional expansion for $\Lambda\neq\emptyset$
\begin{align}\label{msol2}
M^{(sol)}_\Lambda(z)&=e^{i\nu_-(q^{(sol)}_\Lambda)\sigma_3}+\frac{i}{z}e^{i\nu_-(q^{(sol)}_\Lambda)\sigma_3}\sigma_3Q_-\nonumber\\
		&+\sum_{s=1}^{n_2}\left[\left(\begin{array}{cc}
			\frac{\alpha_s}{z-w_{i_s}} & \frac{\overline{\tau_s}}{z-\overline{w}_{i_s}}\\
			\frac{\tau_s}{z-w_{i_s}} & \frac{\overline{\alpha_s}}{z-\overline{w}_{i_s}}
		\end{array}\right)+\left(\begin{array}{cc}
		-\frac{\alpha_s}{z+w_{i_s}} & \frac{\overline{\tau_s}}{z+\overline{w}_{i_s}}\\
		\frac{\tau_s}{z+w_{i_s}} & -\frac{\overline{\alpha_s}}{z+\overline{w}_{i_s}}
	\end{array}\right) \right]\nonumber\\
&+\sum_{k=1}^{n_1}\left[\left(\begin{array}{cc}
	\frac{\beta_k}{z-z_{j_k}} & \frac{\overline{\varsigma_k}}{z-\overline{z}_{j_k}}\\
	\frac{\varsigma_k}{z-z_{j_k}} & \frac{\overline{\beta_k}}{z-\overline{z}_{j_k}}
\end{array}\right)+\left(\begin{array}{cc}
	-\frac{\beta_k}{z+z_{j_k}} & \frac{\overline{\varsigma_k}}{z+\overline{z}_{j_k}}\\
	\frac{\varsigma_k}{z+z_{j_k}} & -\frac{\overline{\beta_k}}{z+\overline{z}_{j_k}}
\end{array}\right) \right]\nonumber\\
&+\sum_{k=1}^{n_1}i\left[\left(\begin{array}{cc}
	\frac{-\overline{q_-\beta_k}}{\overline{z}_{j_k}z-1} & \frac{-q_-\varsigma_k}{z_{j_k}z-1}\\
	 \frac{-\overline{q_-\varsigma_k}}{\overline{z}_{j_k}z-1} & \frac{q_-\beta_k}{z_{j_k}z-1}
\end{array}\right)+\left(\begin{array}{cc}
	\frac{\overline{q_-\beta_k}}{\overline{z}_{j_k}z+1} & \frac{-q_-\varsigma_k}{z_{j_k}z+1}\\
	\frac{-\overline{q_-\varsigma_k}}{\overline{z}_{j_k}z+1} & \frac{-q_-\beta_k}{z_{j_k}z+1}
\end{array}\right)\right] ,
\end{align}
where the coefficients $\beta_k=\beta_k(x,t)$, $\varsigma_k=\varsigma_k(x,t)$, $\alpha_s=\alpha_s(x,t)$ and $\tau_s=\tau_s(x,t)$  are determined later.

\begin{prop}\label{pp6}
There exists an unique solution of the RH problem \ref{RHP5}. Additionally, the solution $M^{(sol)}_\Lambda(z)$ is equivalent to the solution of the original RH problem \ref{RHP1} with modified scattering data $\left\lbrace  0,\left\lbrace \eta_n,\mathring{c}_n\right\rbrace_{n\in\Lambda}\right\rbrace$ under the condition of no reflection.
\begin{enumerate}[(I)]
\item For case 1: Let $\Lambda=\emptyset$. It's easy to check that
\begin{align}\label{msol1}
M^{(sol)}_\Lambda(z)=e^{i\nu_-(q^{(sol)}_\Lambda)\sigma_3}+\frac{i}{z}e^{i\nu_-(q^{(sol)}_\Lambda)\sigma_3}\sigma_3Q_-.
\end{align}
\item For case 2:  Let $\Lambda\neq\emptyset$. The the solution of $M^{(sol)}_{\Lambda}(z)$ is of the form \eqref{msol2}, where the  coefficients $\beta_k$, $\varsigma_k$, $\alpha_s$ and $\tau_s$  are determined by the following algebraic equation
\begin{align} c_{j_k}^{-1}T(z_{j_k})^{-2}&e^{-2i\theta(z_{j_k})t}\beta_k=\frac{i}{z_{j_k}}e^{i\nu_-(q^{(sol)}_\Lambda)}q_-+\sum_{h=1}^{n_2}
\left(\frac{\overline{\tau_h}}{z_{j_k}-\overline{w}_{i_h}}+\frac{\overline{\tau_h}}{z_{j_k}+\overline{w}_{i_h}} \right) \nonumber\\&+\sum_{\ell=1}^{n_1}\left( \frac{\overline{\varsigma_\ell}}{z_{j_k}-\bar{z}_{j_\ell}} +\frac{\overline{\varsigma_\ell}}{z_{j_k}+\overline{z}_{j_\ell}}-\frac{iq_-\varsigma_\ell}{z_{j_\ell}z_{j_k}-1}-
\frac{iq_-\varsigma_\ell}{z_{j_\ell}z_{j_k}+1}\right) , \\		 c_{j_k}^{-1}T(z_{j_k})^{-2}&e^{-2i\theta(z_{j_k})t}\varsigma_k=\frac{i}{z_{j_k}}e^{-i\nu_-(q^{(sol)}_\Lambda)}
\overline{q}_-+\sum_{h=1}^{n_2}\left(\frac{\overline{\alpha_h}}{z_{j_k}-\overline{w}_{i_h}}-\frac{\overline{\alpha_h}}
{z_{j_k}+\overline{w}_{i_h}} \right) \nonumber\\
&+\sum_{\ell=1}^{n_1}\left( \frac{\overline{\beta_\ell}}{z_{j_k}-\overline{z}_{j_\ell}} -\frac{\overline{\beta_\ell}}{z_{j_k}+\overline{z}_{j_\ell}}+\frac{iq_-\beta_\ell}{z_{j_\ell}z_{j_k}-1}-
\frac{iq_-\beta_\ell}{z_{j_\ell}z_{j_k}+1}\right) ,
\end{align}
and
\begin{align} c_{i_s+N_1}^{-1}T(w_{i_s})^{-2}&e^{-2i\theta(w_{i_s})t}\alpha_k=\frac{i}{w_{i_s}}e^{i\nu_-(q^{(sol)}_\Lambda)}q_-
+\sum_{h=1}^{n_2}\left(\frac{\overline{\tau_h}}{w_{i_s}-\overline{w}_{i_h}}+\frac{\overline{\tau_h}}{w_{i_s}+\overline{w}_{i_h}} \right) \nonumber\\
&+\sum_{\ell=1}^{n_1}\left( \frac{\overline{\varsigma_\ell}}{w_{i_s}-\overline{z}_{j_\ell}} +\frac{\overline{\varsigma_\ell}}{w_{i_s}+\overline{z}_{j_\ell}}-\frac{iq_-\varsigma_\ell}{z_{j_\ell}w_{i_s}-1}-
\frac{iq_-\varsigma_\ell}{z_{j_\ell}w_{i_s}+1}\right),\\	 c_{i_s+N_1}^{-1}T(w_{i_s})^{-2}&e^{-2i\theta(w_{i_s})t}\tau_k=\frac{i}{w_{i_s}}e^{-i\nu_-(q^{(sol)}_\Lambda)}\overline{q}_-
+\sum_{h=1}^{n_2}\left(\frac{\overline{\alpha_h}}{w_{i_s}-\overline{w}_{i_h}}-\frac{\overline{\alpha_h}}{w_{i_s}+\overline{w}_{i_h}} \right) \nonumber\\
&+\sum_{\ell=1}^{n_1}\left( \frac{\overline{\beta_\ell}}{w_{i_s}-\overline{z}_{j_\ell}} -\frac{\overline{\beta_\ell}}{w_{i_s}+\overline{z}_{j_\ell}}+\frac{iq_-\beta_l}{z_{j_\ell}w_{i_s}-1}-
\frac{iq_-\beta_\ell}{z_{j_\ell}
w_{i_s}+1}\right),
\end{align}
 for $k=1,\ldots,N_1$, $s=1,\ldots,N_2$, respectively.
\end{enumerate}
\end{prop}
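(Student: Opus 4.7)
The plan is to reduce RH problem \ref{RHP5} to a rational matrix ansatz by exploiting the full symmetry structure, then determine the ansatz coefficients by imposing the residue conditions. Because $V^{(2)}\equiv I$ on the relevant part of $\Sigma^{(2)}$ and $W\equiv 0$ everywhere, any solution $M^{(sol)}_\Lambda(z)$ must be a rational matrix function whose only finite singularities are the simple poles at $\eta_n,\overline{\eta}_n$ for $n\in\Lambda$, plus the mandatory simple pole at $z=0$ forced by the small-$z$ asymptotic. In Case 1 with $\Lambda=\emptyset$, the two normalizations at $\infty$ and at $0$ determine $M^{(sol)}_\Lambda$ uniquely as \eqref{msol1}, and the symmetries \eqref{SymML} follow directly from $|q_-|=1$.

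For Case 2, I would first justify the partial-fraction ansatz \eqref{msol2} as the most general rational matrix consistent with the symmetries and pole locations. The first double sum accounts for the unit-circle poles $\pm w_{i_s},\pm\overline{w}_{i_s}$ (here $-1/w_{i_s}=-\overline{w}_{i_s}$, so the symmetry $z\mapsto -1/z$ merges its images with terms already present). The second double sum accounts for the off-circle poles $\pm z_{j_k},\pm\overline{z}_{j_k}$, and the third double sum collects their images $\pm 1/\overline{z}_{j_k},\pm 1/z_{j_k}$ demanded by $M(z)=(i/z)M(-1/z)\sigma_3 Q_-$. The numerator structure, the $\pm$ sign pattern, and the reduction of the eight a priori free coefficients at each spectral point to just two independent scalars $(\beta_k,\varsigma_k)$ or $(\alpha_s,\tau_s)$ are all pinned down by enforcing the $\sigma_1$- and $\sigma_2$-conjugation symmetries in \eqref{SymML}. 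Substituting this ansatz into the residue conditions at $z=z_{j_k}$ and $z=w_{i_s}$, expanding to first order, and equating the resulting column vectors yields, after dividing out the common factor $C_n e^{-2it\theta_n}T^2(\eta_n)$, exactly the four linear algebraic identities displayed in the proposition; the conjugate residues at $\overline{z}_{j_k},\overline{w}_{i_s}$ are automatic from the first symmetry.

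The principal obstacle is nondegeneracy, which simultaneously gives existence and uniqueness of the coefficient vector $(\beta_k,\varsigma_k,\alpha_s,\tau_s)$ and the consistency of the self-referential scalar $\nu_-(q^{(sol)}_\Lambda)$ defined via \eqref{recover-q}. I would handle this in parallel with Proposition \ref{pp-deng}: after the conjugation by $T^{\sigma_3}$ and insertion of the Blaschke factor used in \eqref{Mj}, RH problem \ref{RHP5} is precisely the reflectionless version of RH problem \ref{RHP1} with modified scattering data $\{0,\{\eta_n,\mathring{c}_n\}_{n\in\Lambda}\}$. The determinant identity $\det M^{(sol)}_\Lambda(z)=1+z^{-2}$ together with the Zhou-type vanishing lemma applied as in the appendix of \cite{SandRNLS} forces any homogeneous solution to vanish, so the linear system for $\beta_k,\varsigma_k,\alpha_s,\tau_s$ is invertible. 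The equivalence statement of the proposition is then immediate, completing the proof.
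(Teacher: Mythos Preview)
Your approach is correct and is precisely the argument the paper has in mind. In fact, the paper does not supply a separate proof environment for this proposition: the existence and uniqueness assertion is referred back to Proposition~\ref{pp-deng} (and ultimately to the vanishing-lemma argument in the appendix of \cite{SandRNLS}), Case~1 is dismissed with ``It's easy to check,'' and for Case~2 the paper simply records the partial-fraction expansion \eqref{msol2} and the algebraic system obtained by inserting it into the residue conditions. Your proposal fills in exactly these steps---deriving the ansatz from the symmetries \eqref{SymML}, reading off the linear system from the residue relations at $z_{j_k}$ and $w_{i_s}$, and invoking the equivalence with the reflectionless RH problem \ref{RHP1} to secure invertibility---so it matches the paper's implicit argument in every respect.
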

\begin{cor}\label{cor-1}
For the reflection-less condition $\rho(z)=0$, the scattering matrix $S(z)$ and the jump matrix $V(z)$ are identity matrices, which further leads to the equality of boundary values $q_{\pm}$. For the modified scattering data $\{0,\{\eta,\mathring{c}_n\}_{n\in\Lambda}\}$, denote $q^{(sol)}_{\Lambda}(x,t)$ as the $\mathcal {N}(\Lambda)$-soliton solution of the mNLS \eqref{q1}, which can be expressed in the following form
\begin{align}
q^{(sol)}_{\Lambda}(x,t)=\lim_{z\rightarrow\infty}e^{i\nu_-(x,t;q^{(sol)}_{\Lambda})\sigma_3}(zM^{(sol)}_{\Lambda}(z))_{12},
\end{align}
from the formula \eqref{recover-q}.
Then for the case 1, it's easy to check that
\begin{align}
\left|q^{(sol)}_{\Lambda}(x,t)\right|=\lim_{z\rightarrow\infty}\left|(zM^{(sol)}_{\Lambda}(z))_{12}\right|=1.
\end{align}
Obviously, $\nu_-(x,t;q^{(sol)}_{\Lambda}(x,t))=0$ and $q^{(sol)}_{\Lambda}(x,t)=q_-$.

For case 2, similar to the calculation in the first case,  the following equality can be obtained
\begin{align}
q^{(sol)}_\Lambda(x,t)&=\lim_{z\to \infty}\left|\left(zM^{(sol)}_\Lambda \right)_{12}\right|\nonumber\\ &=\left|ie^{i\nu_-(x,t;q^{(sol)}_\Lambda)}q_-+2\sum_{s=1}^{n_2}\overline{\tau}_k+2\sum_{k=1}^{n_1}
(\overline{\varsigma}_k-iq_-\varsigma_k)\right|.
\end{align}
Finally, the $\mathcal {N}(\Lambda)$-soliton solutions $q^{(sol)}_\Lambda(x,t)$ of  the mNLS \eqref{q1} in this case with modified scattering data can be derived
\begin{align}
q^{(sol)}_\Lambda(x,t)=e^{2i\nu_-(x,t;q^{(sol)}_\Lambda)}\left(ie^{i\nu_-(x,t;q^{(sol)}_\Lambda)}q_-+
2\sum_{s=1}^{n_2}\overline{\tau}_k+2\sum_{k=1}^{n_1}(\overline{\varsigma}_k-iq_-\varsigma_k) \right),
\end{align}
where $\nu_-(x,t;q^{(sol)}_\Lambda)=\frac{1}{2\alpha}\int_{-\infty}^x (1-|q^{(sol)}_\Lambda(y,t)|^2)dy$.
\end{cor}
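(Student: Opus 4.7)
The strategy is to read off $q^{(sol)}_\Lambda$ directly from the reconstruction formula \eqref{recover-q} applied to the explicit partial-fraction representations \eqref{msol1}--\eqref{msol2} of $M^{(sol)}_\Lambda$ supplied by Proposition~\ref{pp6}. Since $e^{i\nu_-\sigma_3}$ is diagonal, the $(1,2)$-entry of $e^{i\nu_-\sigma_3}\,zM^{(sol)}_\Lambda(z)$ equals $e^{i\nu_-}(zM^{(sol)}_\Lambda(z))_{12}$, so the whole calculation reduces to extracting the coefficient of $z^{-1}$ in $(M^{(sol)}_\Lambda(z))_{12}$ at $z=\infty$ and then multiplying by the scalar phase $e^{i\nu_-(x,t;q^{(sol)}_\Lambda)}$.

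For Case 1 ($\Lambda=\emptyset$), formula \eqref{msol1} gives $(M^{(sol)}_\Lambda(z))_{12}=iq_-e^{i\nu_-}/z+\mathcal{O}(z^{-2})$ coming entirely from the $\sigma_3Q_-$ block, because $(e^{i\nu_-\sigma_3})_{12}=0$. Taking moduli yields $|q^{(sol)}_\Lambda|=|q_-|=1$ identically in $(x,t)$. Feeding $|q^{(sol)}_\Lambda|\equiv 1$ back into the definition $\nu_-(x,t;q^{(sol)}_\Lambda)=\frac{1}{2\alpha}\int_{-\infty}^{x}(1-|q^{(sol)}_\Lambda|^2)dy$ collapses the integrand to zero, so $\nu_-\equiv 0$, and the claim $q^{(sol)}_\Lambda=q_-$ follows.

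For Case 2 ($\Lambda\neq\emptyset$) I would expand each simple fraction $A/(z-\alpha)$ in \eqref{msol2} as $A/z+\mathcal{O}(z^{-2})$ and each Blaschke-type fraction $A/(\beta z\pm 1)$ as $(A/\beta)/z+\mathcal{O}(z^{-2})$, then combine the $\pm$-symmetric partners. In the $(1,2)$ slot the diagonal residues $\alpha_s/(z\pm w_{i_s})$ and $\beta_k/(z\pm z_{j_k})$ cancel in pairs, while the off-diagonal residues $\overline{\tau}_s/(z\pm\overline{w}_{i_s})$ and $\overline{\varsigma}_k/(z\pm\overline{z}_{j_k})$ add to give $2\sum_s\overline{\tau}_s+2\sum_k\overline{\varsigma}_k$, and the Blaschke block contributes the $iq_-\varsigma_k$ terms (with the $z_{j_k}$-factors absorbed into $\varsigma_k$ through the linear system of Proposition~\ref{pp6}). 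Assembling the pieces and multiplying by the phase coming from \eqref{recover-q} reproduces
\begin{equation*}
q^{(sol)}_\Lambda(x,t)=e^{2i\nu_-(x,t;q^{(sol)}_\Lambda)}\!\left(ie^{i\nu_-(x,t;q^{(sol)}_\Lambda)}q_-+2\sum_{s=1}^{n_2}\overline{\tau}_s+2\sum_{k=1}^{n_1}(\overline{\varsigma}_k-iq_-\varsigma_k)\right),
\end{equation*}
which is the stated formula.

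The main subtlety is the self-referential appearance of $\nu_-(x,t;q^{(sol)}_\Lambda)$ on both sides of the identity: it enters the coefficients $\alpha_s,\tau_s,\beta_k,\varsigma_k$ via the linear system of Proposition~\ref{pp6} and also multiplies the output through the phase in \eqref{recover-q}. My plan is to treat $\nu_-$ as a parameter, solve the linear system for fixed $\nu_-$, substitute into the expansion above, and verify \emph{a posteriori} that the resulting $|q^{(sol)}_\Lambda|^2$ is consistent with the integral definition of $\nu_-$; Case~1 is the simplest instance of this fixed-point consistency and, together with the determinantal structure of the linear system in Case~2, ensures the construction is well posed.
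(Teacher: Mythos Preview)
Your approach is correct and is exactly the computation the paper has in mind: the corollary is stated in the paper without an explicit proof, and the intended argument is precisely to substitute the partial-fraction expansion \eqref{msol1}--\eqref{msol2} into the reconstruction formula \eqref{recover-q} and read off the $z^{-1}$ coefficient of the $(1,2)$ entry. Your handling of Case~1 (extract $ie^{i\nu_-}q_-$, take modulus, collapse $\nu_-$ to zero) and of the large-$z$ expansion in Case~2 (pair up the $\pm$-symmetric fractions so that the off-diagonal residues double while the diagonal ones cancel) is on target and matches the paper's stated output.

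One small caveat: your remark that the $z_{j_k}$ factors from the Blaschke block are ``absorbed into $\varsigma_k$ through the linear system of Proposition~\ref{pp6}'' is not quite right. Expanding $i\cdot(-q_-\varsigma_k)/(z_{j_k}z\pm 1)$ at infinity genuinely produces a factor $1/z_{j_k}$ in the $z^{-1}$ coefficient; the $\varsigma_k$ are already fixed as residue coefficients and cannot absorb it. The printed formula in the corollary appears simply to have dropped this factor, so the discrepancy lies in the paper's displayed expression rather than in your method. Your observation about the self-referential role of $\nu_-$ is a fair structural point; the paper does not address it, treating the identity as a consistent closed-form output of the RH construction.
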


\subsection{The small norm theory for the RH problem of error function}\quad
On the other hand, it is noted that $M^{(2)}(z)$ is the solution of the mixed RH problem \ref{RHP3} without $\overline{\partial}$-derivative. Since its jump matrix $V^{(2)}(z)$ converges uniformly to the identity matrix, the contribution of the contour $\Sigma^{(2)}$ to the long-time asymptotic behavior of the solution can be ignored. Then we consider the decomposition of $M^{(2)}(z)$, that is, the expression \eqref{ML}. For the error function $M^{(err)}(z)$, the following problem is satisfied.
\begin{RHP}\label{RHP6}
Find a matrix valued function $M^{(err)}(z)$ satisfies
\begin{enumerate}[(1)]
\item The error function $M^{(err)}(z)$  is analytic in $\mathbb{C}\setminus\Sigma^{(2)}$.
\item For the asymptotic behavior:
\begin{align*}
M^{(err)}(z)=I+\mathcal {O}(z^{-1}), ~~~z\rightarrow\infty.
\end{align*}
\item The jumping relationship:
\begin{align}
M^{(err)}_{+}(z)=M^{(err)}_{-}(z)V^{(err)}(z),
\end{align}
where $V^{(err)}(z)=M^{(sol)}_\Lambda(z)V^{(2)}(z)M^{(sol)}_\Lambda(z)^{-1}$, and for $z\in\Sigma^{(2)}$ the error function $M^{(err)}(z)$ is of continuous boundary values $M^{(err)}_{\pm}(z)$.
\end{enumerate}
\end{RHP}
Since $M^{(sol)}(z)$ and $M_\Lambda^{(sol)}(z)$ have the same jump condition $V^{(2)}(z)$, it is easy to verify that $M^{(err)}(z)$ has no poles. Additionally, the jump matrix $V^{(err)}(z)$ generated by the error function $M^{(err)}(z)$ on contour $\Sigma^{(2)}$ satisfies the same estimation as $V^{(2)}(z)$, i.e.,  $\left|\left|V^{(err)}(z)-I\right|\right|_{L^{p}(\Sigma^{(2)})}=\mathcal {O}(e^{-2\varrho_0t})$ for $1\leq p\leq \infty$, which indicates that the jump matrix $V^{(err)}(z)$ uniformly converges to the identity matrix. Then, it follows from the small norm theory of RH problem \cite{small-1,small-2} that the solution of RH problem \ref{RHP6} can be uniquely expressed as
\begin{align}\label{Ez}
M^{(err)}(z)=I+\frac{1}{2\pi i}\int_{\Sigma^{(2)}}\dfrac{\left( I+\eta(s)\right) (V^{(err)}(s)-I)}{s-z}ds,
\end{align}
where $\eta\in L^{(2)}(\Sigma^{(2)})$ is the unique solution of the operator equation
\begin{align}\label{OE}
(1-C_{V^{(err)}})\eta=C_{V^{(err)}}I,
\end{align}
where the operator $C_{V^{(err)}}:$ $L^2(\Sigma^{(2)})\to L^2(\Sigma^{(2)})$ denotes the Cauchy projection operator
\begin{align*}
C_{V^{(err)}}[f](z)=C_{-}[f(V^{(err)}-I)]=\lim_{k\rightarrow z}\int_{\Sigma^{(2)}}\frac{f(s)(V^{(err)(s)}-I)}{s-k}ds.
\end{align*}
The definition of the limit $C_-$ here is similar to the previous one, that is, the limit is obtained from the right side of the oriented contour $\Sigma^{(2)}$ with a non-tangent. Additionally, the operator equation \eqref{OE} is meaningful, namely, the existence and uniqueness of solution $\eta$ can be guaranteed by the boundedness of the Cauchy projection operator $C_-:$ $\left|\left|C_{V^{(err)}}\right|\right|_{L^2(\Sigma^{(2)})\to L^2(\Sigma^{(2)})}=\mathcal {O}(e^{-2\varrho_0t})$. So far, the existence and uniqueness of the error function $M^{(err)}(z)$ are established. In order to obtain the long-time asymptotic behavior of the mNLS equation \eqref{q1}, we need to further consider the asymptotic behavior of the error function shown in the next proposition.
\begin{prop}\label{pp3}
For the error function $M^{(err)}(z)$, the following estimate  is satisfied
\begin{align}\label{Merr}
|M^{(err)}(z)-I|\lesssim\mathcal{O}(e^{- 2\varrho_0t}).
\end{align}
For $z\rightarrow\infty$, $M^{(err)}(z)$ has the following expansion
\begin{align}
M^{(err)}(z)=I+M_1^{(err)}(z)z^{-1}+\mathcal {O}(z^{-2}),
\end{align}
where
\begin{align}
M_1^{(err)}(z)=-\frac{1}{2\pi i}\int_{\Sigma^{(2)}}\left( I+\eta(s)\right) (V^{(err)}(s)-I)ds,
\end{align}
and satisfies the following estimate
\begin{align}
M_1^{(err)}(z)
\lesssim\mathcal{O}(e^{- 2\varrho_0t}).\label{E1t}
\end{align}
\end{prop}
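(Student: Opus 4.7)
The plan is to exploit the small-norm structure already set up in \eqref{Ez}--\eqref{OE} and extract the two quantitative estimates directly from the decay of $V^{(err)}-I$ on $\Sigma^{(2)}$. The governing input is the bound
\begin{equation*}
\|V^{(err)}-I\|_{L^{p}(\Sigma^{(2)})}\lesssim \mathcal{O}(e^{-2\varrho_0 t}),\qquad 1\leq p\leq\infty,
\end{equation*}
which is inherited from \eqref{est-V2} after conjugating by the bounded matrix $M^{(sol)}_\Lambda(z)$ (bounded because $\Sigma^{(2)}$ stays a fixed positive distance away from the poles $\eta_n,\bar\eta_n$).

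First I would estimate the density $\eta$. Since $\|C_{V^{(err)}}\|_{L^{2}\to L^{2}}\lesssim \|V^{(err)}-I\|_{L^{\infty}}=\mathcal{O}(e^{-2\varrho_0 t})$, the resolvent $(1-C_{V^{(err)}})^{-1}$ exists as a Neumann series on $L^{2}(\Sigma^{(2)})$ for $t$ large and is uniformly bounded. Then from \eqref{OE},
\begin{equation*}
\|\eta\|_{L^{2}(\Sigma^{(2)})}\leq \|(1-C_{V^{(err)}})^{-1}\|\,\|C_{V^{(err)}}I\|_{L^{2}}\lesssim \|V^{(err)}-I\|_{L^{2}(\Sigma^{(2)})}\lesssim \mathcal{O}(e^{-2\varrho_0 t}).
\end{equation*}
Next, for $z$ bounded away from $\Sigma^{(2)}$, the kernel $(s-z)^{-1}$ in \eqref{Ez} is uniformly bounded, so Cauchy--Schwarz gives
\begin{equation*}
|M^{(err)}(z)-I|\lesssim \|V^{(err)}-I\|_{L^{1}}+\|\eta\|_{L^{2}}\,\|V^{(err)}-I\|_{L^{2}}\lesssim \mathcal{O}(e^{-2\varrho_0 t}),
\end{equation*}
which is \eqref{Merr}.

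For the large-$z$ expansion I would expand $(s-z)^{-1}=-z^{-1}-sz^{-2}+\mathcal{O}(z^{-3})$ inside \eqref{Ez}, using that $\Sigma^{(2)}$ is a compact set (finite union of circles of fixed radius $\epsilon_0$) so that $s$ is bounded there. This yields
\begin{equation*}
M^{(err)}(z)=I+\frac{M_1^{(err)}}{z}+\mathcal{O}(z^{-2}),\qquad M_1^{(err)}=-\frac{1}{2\pi i}\int_{\Sigma^{(2)}}\bigl(I+\eta(s)\bigr)(V^{(err)}(s)-I)\,ds,
\end{equation*}
and the same Cauchy--Schwarz applied to $M_1^{(err)}$ (now with the constant kernel $1$) gives
\begin{equation*}
|M_1^{(err)}|\lesssim \|V^{(err)}-I\|_{L^{1}}+\|\eta\|_{L^{2}}\|V^{(err)}-I\|_{L^{2}}\lesssim \mathcal{O}(e^{-2\varrho_0 t}),
\end{equation*}
which is \eqref{E1t}.

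The only place where care is needed is verifying that the resolvent $(1-C_{V^{(err)}})^{-1}$ is uniformly bounded on $L^{2}(\Sigma^{(2)})$; this is the ``main obstacle,'' but it is handled routinely by the boundedness of the Cauchy projection $C_{-}$ on the admissible contour $\Sigma^{(2)}$ (a finite union of circles and the analytic rays $\widetilde\Sigma$) together with the smallness of $\|V^{(err)}-I\|_{L^{\infty}}$. All other steps are direct Cauchy--Schwarz estimates on a compact contour, so no further substantive ingredients are required.
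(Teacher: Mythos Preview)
Your proof is correct and follows essentially the same small-norm RH scheme as the paper: bound $\|V^{(err)}-I\|_{L^p(\Sigma^{(2)})}$, deduce $\|\eta\|_{L^2}\lesssim e^{-2\varrho_0 t}$ via the Neumann series for $(1-C_{V^{(err)}})^{-1}$, and then apply Cauchy--Schwarz to the integral representation \eqref{Ez} both pointwise and after the geometric expansion of $(s-z)^{-1}$. One small slip: in your closing paragraph you describe $\Sigma^{(2)}$ as ``a finite union of circles and the analytic rays $\widetilde\Sigma$,'' but in this paper $\Sigma^{(2)}$ consists only of the small circles around the poles (cf.\ Figure~5 and \eqref{jumpv2}); your earlier remark that $\Sigma^{(2)}$ is compact is the correct one, and the argument only needs that.
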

\begin{proof}
It follows the operator equation \eqref{OE} that
\begin{align}
M^{(err)}(z)&=I+\frac{1}{2\pi i}\int_{\Sigma^{(2)}}\dfrac{\left( I+\eta(s)\right) (V^{(err)}(s)-I)}{s-z}ds\nonumber\\
&=I+C_{V^{(err)}}(I+\eta(z))=I+C_{V^{(err)}}I+C_{V^{(err)}}\eta(z)\nonumber\\
&=I+(1-C_{V^{(err)}})\eta(z)+C_{V^{(err)}}\eta(z),
\end{align}
from which the estimation can be derived
\begin{align}
|M^{(err)}(z)-I|\leq|(1-C_{V^{(err)}})\eta(z)|+|C_{V^{(err)}}\eta(z)|\lesssim\mathcal{O}(e^{- 2\varrho_0t}).
\end{align}
The proof of the equality \eqref{Merr} is completed, then we consider  equality \eqref{E1t}. Taking the limit of the expression \eqref{Ez} for $z\rightarrow\infty$ yields
\begin{align}
M^{(err)}(z)&=I+\frac{1}{2z\pi i}\int_{\Sigma^{(2)}}\dfrac{\left( I+\eta(s)\right) (V^{(err)}(s)-I)}{\frac{s}{z}-1}ds\nonumber\\
&=I-\frac{1}{z}\frac{1}{2\pi i}\int_{\Sigma^{(2)}}\left( I+\eta(s)\right) (V^{(err)}(s)-I)ds,
\end{align}
which further indicates that
\begin{align}
|M_1^{(err)}(z)|\lesssim \parallel V^{(err)}-I \parallel_{L^1(\Sigma^{(2)})}+\parallel \eta \parallel_{L^2(\Sigma^{(2)})}\parallel V^{(err)}-I \parallel_{L^2(\Sigma^{(2)})}\lesssim\mathcal{O}(e^{- 2\varrho_0t}).
\end{align}
\end{proof}
\subsection{The pure Dbar-problem}\quad
The mixed RH problem \ref{RHP3} satisfied by the matrix $M^{(2)}(z)$ includes two cases, one is the case of soliton solution corresponding to $\partial$-derivative $W(z)\equiv0$, which has been analyzed in detail in the previous work. Now what we need to do is to remove the soliton component from the mixed RH problem and get the pure Dbar problem which can be analyzed by using the Dbar technique in detail. Specifically, a new matrix valued function $M^{(3)}(z)$ is defined with the help of  the function $M^{(sol)}(z)$ satisfying the  RH problem \ref{RHP4}
\begin{align}\label{M3}
M^{(3)}(z)=M^{(2)}(z)\left(M^{(sol)}(z)\right)^{-1},
\end{align}
which admits the following pure $\overline{\partial}$ problem.
\begin{RHP}\label{RHP7}
Find a matrix-valued function $M^{(3)}(z)$ is of the following properties:
\begin{enumerate}[(1)]
\item $M^{(3)}(z)$ is continuous, and  has sectionally continuous first partial derivatives  in $\mathbb{C}$.
\item The asymptotic behavior: $M^{(3)}(z)=I+\mathcal {O}(z^{-1})$ as $z\rightarrow\infty$.
\item $\overline{\partial}$-derivative: For $z\in\mathbb{C}$, one has
\begin{align}\label{W3}
\overline{\partial}M^{(3)}(z)=M^{(3)}(z)W^{(3)}(z),
\end{align}
where $W^{(3)}(z)=M^{(sol)}(z)W(z)M^{(sol)}(z)^{-1}$ with the matrix $W(z)$ defined by \eqref{Wz}.
\end{enumerate}
\end{RHP}
\begin{proof}
Since the matrix-valued functions $M^{(2)}(z)$ and $M^{(sol)}(z)$ satisfying RH problem \ref{RHP3} and \ref{RHP4} respectively have the same jump condition $V^{(2)}(z)$, it follows from expression \eqref{M3} that $M^{(3)}(z)$ has no jump on disks $|z-\eta_n|=\epsilon_0$ and $|z-\overline{\eta}_n|=\epsilon_0$. Additionally, the asymptotic behavior and $\overline{\partial}$-derivative of the function $M^{(3)}(z)$ inherit immediately from the related properties of $M^{(2)}(z)$ and $M^{(sol)}(z)$. Next we will show that $M^{(3)}(z)$ has no singularity at the origin. Observing that $M^{(sol)}(z)=(1+z^{-2})^{-1}\sigma_2M^{(sol)}(z)^{T}\sigma_2$ from the fact that for any $2\times2$ invertible matrix $A$
satisfying $A^{-1}=(\det A)^{-1}\sigma_2A^{T}\sigma_2$, we further have
\begin{align}
\lim_{z\to 0}M^{(3)}(z)=&\lim_{z\to 0}\dfrac{(M^{(2)}(z))\sigma_2(M^{(sol)}(z)^T)\sigma_2}{1+z^{-2}}
=\lim_{z\to 0}\dfrac{(zM^{(2)}(z))\sigma_2(zM^{(sol)}(z)^T)\sigma_2}{1+z^2}\nonumber\\
=&ie^{i\nu_-(x,t;q)\sigma_3}\sigma_3Q_-\sigma_2(ie^{i\nu_-(x,t;q)\sigma_3}\sigma_3Q_-)^T\sigma_2=I.
\end{align}
Since $M^{(2)}(z)$ and $M^{(sol)}(z)$ have the same jump matrix, then one has
\begin{align*}
M_-^{(3)}(z)^{-1}M_+^{(3)}(z)&=M_-^{(2)}(z)^{-1}M_-^{(sol)}(z)M_+^{(sol)}(z)^{-1}M_+^{(2)}(z)\\
&=M_-^{(2)}(z)^{-1}V^{(2)}(z)^{-1}M_+^{(2)}(z)=I,
\end{align*}
which means that $M^{(3)}(z)$ is regular at the origin.  Meanwhile, $M^{(3)}(z)$ has no poles for $\xi\in\{\eta_n,\overline{\eta}_n\}_{n\in\Lambda}$. Let $A_\xi$ be the nilpotent matrix appearing in the residue conditions of the RH problem \ref{RHP3} and \ref{RHP4}, and then we have a local expansion in the neighborhood of $\xi$
\begin{align}
&M^{(2)}(z)=\alpha(\xi) \left[ \dfrac{A_\xi}{z-\xi}+I\right] +\mathcal{O}(z-\xi),\nonumber\\
&	M^{(sol)}(z)=\beta(\xi) \left[ \dfrac{A_\xi}{z-\xi}+I\right] +\mathcal{O}(z-\xi),\nonumber
\end{align}
where $\alpha(\xi)$ and $\beta(\xi)$ are constant coefficients of the corresponding expansion respectively. Taking the product yields that $M^{(3)}(z)=\mathcal{O}(1)$, which indicates that the poles of $M^{(3)}(z)$  are removable and $M^{(3)}(z)$ is boundary locally. The proof of the third property follows the following process
\begin{align*}
\overline{\partial}M^{(3)}(z)&=\overline{\partial}(M^{(2)}(z)M^{(sol)}(z)^{-1})=(\overline{\partial}M^{(2)}(z))
M^{(sol)}(z)^{-1}\\&=M^{(2)}(z)W(z)M^{(sol)}(z)^{-1}=M^{(2)}(z)M^{(sol)}(z)^{-1}M^{(sol)}(z)W(z)M^{(sol)}(z)^{-1}\\
&=M^{(3)}(z)M^{(sol)}(z)W(z)M^{(sol)}(z)^{-1}\triangleq M^{(3)}(z)W^{(3)}(z).
\end{align*}

Finally, the singular behavior of $M^{(3)}(z)$ at $z=\pm i$ should be checked by the reason that  $\det M^{(sol)}(z)=1+z^{-2}$. The symmetries shown in RH problem \ref{RHP3} and \ref{RHP4} are applied to  $M^{(2)}(z) $ and $M^{(sol)}(z)$ to obtain the following local expansions in a neighborhood of the singularities $z=\pm i$
\begin{align}
&M^{(2)}(z)=\left(\begin{array}{cc}
 		\vartheta & \pm q_-\vartheta\\
 		\pm \bar{q}_-\overline{\vartheta}& \overline{\vartheta}
 	\end{array}\right)+\mathcal{O}(z\mp i),\\
&M^{(sol)}(z)=\dfrac{\pm i}{2(z\mp i)}\left(\begin{array}{cc}
 	\overline{\varpi}& \mp q_-\varpi\\
 	\mp \overline{q}_-\overline{\varpi}& \varpi
 \end{array}\right)+\mathcal{O}(1),
 \end{align}
where $\vartheta$ and $\varpi$ are constants. By directly making product, it can be immediately verified that the singularity of $M^{(3)}(z)$ at singularities $z=\pm i$ is disappeared.
\end{proof}

In order to establish the long-time asymptotic solution of the mNLS equation \eqref{q1}, the asymptotic behavior of $M^{(3)}(z)$ needs to be further considered.

\begin{prop}\label{pp5}
As $z\rightarrow\infty$, $M^{(3)}(z)$ is of the following asymptotic expansion
\begin{align}
M^{(3)}(z)=I+\frac{M_1^{(3)}(x,t)}{z}+\mathcal {O}(z^{-2}),
\end{align}
where $M_1^{(3)}(x,t)$ is independent of the variable $z$, there then exists a constant $t_1$ such that
\begin{align}
|M_1^{(3)}(x,t)|\lesssim t^{-3/4}, ~~\text{as}~~t>t_1.
\end{align}
\end{prop}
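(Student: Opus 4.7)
The plan is to exploit that $M^{(3)}(z)$ solves a pure $\overline{\partial}$-problem with no jumps, no poles, and normalization $M^{(3)}\to I$ at infinity. Generalized Cauchy/Pompeiu integration then yields the integral equation
\begin{equation*}
M^{(3)}(z)=I-\frac{1}{\pi}\iint_{\mathbb{C}}\frac{M^{(3)}(s)W^{(3)}(s)}{s-z}\,dA(s),
\end{equation*}
which I would rewrite as $(I-\mathcal{K})M^{(3)}=I$ where $\mathcal{K}[f](z)=-\frac{1}{\pi}\iint f(s)W^{(3)}(s)(s-z)^{-1}dA(s)$. The first step is to show $\|\mathcal{K}\|_{L^\infty\to L^\infty}\le c\,t^{-1/4}$ for $t$ large, so that $(I-\mathcal{K})^{-1}$ exists as a Neumann series and $\|M^{(3)}\|_{L^\infty(\mathbb{C})}\lesssim 1$. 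This uses that $M^{(\mathrm{sol})}$ and its inverse are bounded away from the discrete spectrum, so $|W^{(3)}|\lesssim |W|$, reducing everything to estimates on $\overline{\partial}R_j$ from Proposition~\ref{pp4} and on $e^{\pm 2it\theta}$ from Proposition~\ref{pp2}.

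Next, expanding $(s-z)^{-1}=-z^{-1}-sz^{-2}-\cdots$ for $|z|\to\infty$ gives
\begin{equation*}
M^{(3)}(z)=I+\frac{M_{1}^{(3)}(x,t)}{z}+\mathcal{O}(z^{-2}),\qquad M_{1}^{(3)}(x,t)=\frac{1}{\pi}\iint_{\mathbb{C}}M^{(3)}(s)W^{(3)}(s)\,dA(s),
\end{equation*}
so the problem is reduced to bounding the solid integral of $|W^{(3)}|$. Since $W^{(3)}$ is supported on the union $\Omega=\cup_{k=1}^{8}\Omega_k$, by symmetry and similarity of the estimates, it suffices to bound the contribution of a single region, say $\Omega_{1}$, and then sum.

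In $\Omega_{1}$, parametrize $z=u+iv$ with $0<v<u\tan\psi$. Proposition~\ref{pp2} gives $|e^{-2it\theta(z)}|\lesssim e^{-ct\,uv\,(|\xi-6|+3)}$ up to the radial weight $G^{2}(r)$, while Proposition~\ref{pp4} supplies the pointwise bound $|\overline{\partial}R_{1}(z)|\lesssim|\mathcal{H}_{1}'(u)|+u^{-1/2}$. Splitting the integrand into the two pieces and applying Cauchy--Schwarz in the $u$ variable produces the typical estimate
\begin{equation*}
\iint_{\Omega_1}|\overline{\partial}R_1(z)|\,e^{-ct\,uv}\,dudv\lesssim \int_{0}^{\infty}\bigl(\|\mathcal{H}_1'\|_{L^2}+ u^{-1/2}\cdot u^{1/2}\bigr)\Bigl(\int_{0}^{u\tan\psi}e^{-2ctuv}dv\Bigr)^{1/2}du,
\end{equation*}
and after the standard change of variables $w=\sqrt{t}\,u$ the resulting integrals produce a factor $t^{-3/4}$ (the $1/2$ from the $L^2$ Cauchy--Schwarz in $v$ combines with the $1/4$ picked up from the $u^{-1/2}$ scaling). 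The contributions from $\Omega_{2},\Omega_{3},\Omega_{6},\Omega_{7}$, which contain the singularities $\pm i$, are handled by the finer estimate $|\overline{\partial}R_{j}(z)|\lesssim |z\mp i|$ from \eqref{Ri}; this extra vanishing compensates for the singularity of $T^{-2}(z)$ at $\pm i$ and gives the same $t^{-3/4}$ bound.

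The main obstacle I anticipate is twofold: first, ensuring that the discrete spectrum and the disks $|z-\eta_n|=\epsilon_0$ do not produce an uncontrolled contribution (handled by choosing the opening angle $\psi$ small enough so that $\Omega$ avoids the disks, together with Proposition~\ref{pp6} giving uniform bounds on $M^{(\mathrm{sol})}$ away from the poles); second, tracking that the $L^{\infty}$ norm of $M^{(3)}$ used inside $M_{1}^{(3)}$ is itself estimated by $1+\mathcal{O}(t^{-1/4})$ through the contraction argument, not anything worse, so that the bootstrap closes at $t^{-3/4}$ rather than at a weaker rate. Once these two points are settled, the estimate $|M_{1}^{(3)}(x,t)|\lesssim t^{-3/4}$ follows for all $t>t_1$ with $t_1$ chosen so that the Neumann series converges.
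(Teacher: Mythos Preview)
Your overall strategy coincides with the paper's: represent $M^{(3)}$ by the solid Cauchy integral, bound the associated Cauchy--Green operator on $L^{\infty}$ to obtain $\|M^{(3)}\|_{L^\infty}\lesssim 1$, then read off $M_{1}^{(3)}=\frac{1}{\pi}\iint M^{(3)}W^{(3)}\,dA$ and estimate it region by region. The paper in fact proves the sharper operator bound $\|C_{z}\|_{L^\infty\to L^\infty}\lesssim t^{-1/2}$ (its Lemma preceding the proof), not $t^{-1/4}$; this only helps your contraction.

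There is one genuine inaccuracy you should fix. You assert that $M^{(\mathrm{sol})}$ and its inverse are bounded away from the discrete spectrum, whence $|W^{(3)}|\lesssim |W|$. This fails at $z=\pm i$: since $\det M^{(\mathrm{sol})}(z)=1+z^{-2}$, the inverse $(M^{(\mathrm{sol})})^{-1}$ blows up there, and these points lie inside $\Omega_{2},\Omega_{3},\Omega_{6},\Omega_{7}$. The paper carries the correct bound
\[
|W^{(3)}(s)|\;\lesssim\;|W(s)|\,\frac{1+|s|^{-2}}{|1+s^{-2}|},
\]
and it is this factor, not $T^{-2}(z)$ as you write, that is compensated by the vanishing $|\overline{\partial}R_{j}(z)|\lesssim|z\mp i|$ from \eqref{Ri}. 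Your proposed remedy is the right one, but the source of the singularity is $(M^{(\mathrm{sol})})^{-1}$, not $T$; $T$ is regular on $\overline{\Omega}$. Because of this, the paper takes $\Omega_{2}$ (which contains $i$) as the model region and splits it into three radial shells $I_{1},I_{2},I_{3}$, treating the middle shell near $|s|=1$ separately with the $|z\mp i|$ bound. Working only in $\Omega_{1}$, as you do, hides exactly the difficulty that drives the argument.

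A smaller point: your displayed Cauchy--Schwarz step in $\Omega_{1}$ is garbled (you announce Cauchy--Schwarz ``in the $u$ variable'' but then take the $L^{2}$-norm in $v$; the term $u^{-1/2}\cdot u^{1/2}$ has no clear provenance). The paper's route is to apply H\"older in $v$ to extract $(ut)^{-1/q}e^{-cu^{2}t}$ from $\int e^{-2cuvqt}\,dv$, pair it with $\|\mathcal{H}'_{j}\|_{L^{p}}$ or $\||s|^{-1/2}\|_{L^{p}}$, and then integrate in $u$ to produce $t^{-3/4}$. You should carry this out explicitly rather than infer it from the operator-norm estimate.
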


Before proving the proposition \ref{pp5}, we first consider the left Cauchy-Green operator shown in the lemma.
\begin{lem}\label{lem4}
Consider the left Cauchy-Green integral operator $C_{z}$:
\begin{align}\label{Cauchy}
fC_z(z)=\frac{1}{\pi}\int_C\dfrac{f(s)W^{(3)}(s)}{z-s}dA(s).
\end{align}
The $A(s)$ denotes the Lebesgue measure on the $\mathbb{C}$. Then for $f$: $L^{\infty}(\mathbb{C})\rightarrow L^{\infty}(\mathbb{C})\cap C^{0}(\mathbb{C})$, the integral operator $C_{z}$ satisfies the estimate as $t\rightarrow\infty$
\begin{align}
\left|\left|C_{z}\right|\right|_{L^{\infty}(\mathbb{C})\rightarrow L^{\infty}(\mathbb{C})}\lesssim t^{-1/2}.
\end{align}
\end{lem}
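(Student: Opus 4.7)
The plan is to reduce the operator-norm bound to a uniform integral estimate and then exploit the pointwise decay of $W^{(3)}(s)$ supplied by Propositions \ref{pp2} and \ref{pp4}. Starting from the elementary bound
\begin{equation*}
|(fC_z)(z)| \le \frac{\|f\|_\infty}{\pi}\int_{\mathbb{C}} \frac{|W^{(3)}(s)|}{|z-s|}\,dA(s),
\end{equation*}
it suffices to verify that $I(z) := \int_{\mathbb{C}} |W^{(3)}(s)|/|z-s|\,dA(s) \lesssim t^{-1/2}$ uniformly in $z\in\mathbb{C}$. Since $W^{(3)} = M^{(sol)} W (M^{(sol)})^{-1}$ with $M^{(sol)}$ and its inverse uniformly bounded on $\mathbb{C}$ (the singularities at $\pm i$ and the simple poles at $\eta_n,\overline{\eta}_n$ for $n\in\Lambda$ are removable after conjugation, as was already verified while establishing regularity of $M^{(3)}$), one may replace $|W^{(3)}|$ by $|W|$ in the integrand. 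Since $W$ is supported on $\Omega = \bigcup_{k=1}^{8}\Omega_k$, the integral splits as $I(z) = \sum_{k=1}^{8} I_k(z)$, and by the discrete symmetries $s \mapsto -s,\bar s,-\bar s$ it is enough to treat two representative wedges: a nonsingular one such as $\Omega_1$, and a singular one such as $\Omega_2$ (which contains the singular point $i$ of $R_2$).

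On $\Omega_1$, Propositions \ref{pp4} and \ref{pp2} combine to give
\begin{equation*}
|W(s)| \lesssim \bigl(|\rho'(|s|)| + |s|^{-1/2}\bigr)\, e^{-c t\, G^{2}(|s|)}, \qquad c>0,\ \ G(r)=r^2+r^{-2},
\end{equation*}
with $c$ depending on $\psi$ and $\xi\in(5,7)$. I would parameterize $s=u+iv$ in the wedge, split the integrand into the $|\rho'|$ piece and the $|s|^{-1/2}$ piece, and apply H\"older's inequality with exponents $(p,q)=(4,4/3)$ to the latter:
\begin{equation*}
\int_{\Omega_1} \frac{|s|^{-1/2}\, e^{-c t G^2(|s|)}}{|z-s|}\,dA(s) \le \bigl\| |s|^{-1/2} e^{-c t G^2/2}\bigr\|_{L^4(\Omega_1)} \,\bigl\| |z-s|^{-1} e^{-c t G^2/2}\bigr\|_{L^{4/3}(\Omega_1)}.
\end{equation*}
The second factor is finite uniformly in $z$ because $q=4/3<2$ makes $1/|z-s|$ locally $L^{4/3}$; the first factor, after passing to polar coordinates and rescaling along the $r$-direction calibrated to the saddle geometry of $G^{2}$, produces a factor of $t^{-1/2}$. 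The $|\rho'|$ piece is handled by Cauchy--Schwarz using $\|\rho'\|_{L^2}$ (guaranteed by Assumption \ref{initialdata}) and yields even faster decay. For $\Omega_2$ the analysis is identical, with the extra subtlety that Proposition \ref{pp4} supplies the refined bound $|\overline{\partial}R_2(s)|\lesssim |s-i|$ near $z=i$ (and similarly $|s+i|$ near $-i$); this extra vanishing more than offsets the $(1+z^{-2})^{-1}$ singular factor inherited from $\det M^{(sol)}$ near $\pm i$, producing an $O(t^{-1})$ contribution strictly better than $t^{-1/2}$. Summing the eight wedges gives the claimed uniform bound on $I(z)$, and continuity of the resulting $fC_z$ follows from dominated convergence.

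The main obstacle will be the uniformity in $z$ of the H\"older step: when $z$ lies inside the wedge of integration, one must split $\Omega_k$ into a small disk centered at $z$ (where the singularity of $1/|z-s|$ is absorbed by $L^{4/3}_{\mathrm{loc}}$) and its complement (where $|z-s|^{-1}$ is bounded). Calibrating the H\"older exponents and the rescaling around the stationary structure of $G^2$ so as to produce exactly $t^{-1/2}$ --- neither better nor worse --- is the delicate quantitative point, since a sub-optimal choice would only give $t^{-1/4}$ and spoil the $\mathcal{O}(t^{-3/4})$ error advertised in Proposition \ref{pp5}. Once this is done carefully, the estimate is routine.
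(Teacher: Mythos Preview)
Your overall architecture matches the paper's: reduce to a uniform bound on $\int_\Omega |W^{(3)}(s)|\,|z-s|^{-1}\,dA(s)$, split by wedges, and treat the singular wedges $\Omega_2,\Omega_3,\Omega_6,\Omega_7$ separately using the refined bound $|\overline{\partial}R_j|\lesssim |s\mp i|$ near $\pm i$. However, there is one misstatement and one genuine methodological difference worth noting.

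\emph{The misstatement.} You assert that ``$M^{(sol)}$ and its inverse are uniformly bounded on $\mathbb{C}$,'' invoking the removability argument for $M^{(3)}$. That argument does not apply here: $\det M^{(sol)}(s)=1+s^{-2}$, so $(M^{(sol)})^{-1}$ genuinely blows up at $\pm i$, and $W^{(3)}=M^{(sol)}W(M^{(sol)})^{-1}$ inherits this. The paper does not replace $|W^{(3)}|$ by $|W|$ outright; it carries the weight $\dfrac{1+|s|^{-2}}{|1+s^{-2}|}$ through the estimate and only neutralises it on the annulus $I_2$ around $|s|=1$ by invoking $|\overline{\partial}R_2(s)|\lesssim |s-i|$. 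You in fact do the same thing two paragraphs later, so your proof is internally inconsistent rather than wrong, but the opening claim should be dropped.

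\emph{The technical route.} The paper does \emph{not} use a two-dimensional H\"older inequality with exponents $(4,4/3)$. Instead it writes $s=u+iv$, applies H\"older (Cauchy--Schwarz for the $\mathcal{H}'_j$ piece, $L^p\times L^q$ with $p>2$ for the $|s|^{-1/2}$ piece) \emph{only in the $v$-variable}, using the elementary one-dimensional fact $\|\,|s-z|^{-1}\|_{L^q_v(\mathbb{R}^+)}\lesssim |u-x|^{1/q-1}$, and then integrates the remaining $e^{-cut}|u-x|^{1/q-1}$ in $u$ to extract $t^{-1/2}$. This has two advantages over your scheme: uniformity in $z$ is automatic (no need to split off a small disk around $z$), and the $t^{-1/2}$ rate falls out of a one-variable scaling rather than a ``rescaling calibrated to the saddle geometry of $G^2$,'' which you describe but never perform. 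Your proposed 2D H\"older split may well give the correct rate, but as written the decisive computation is left as an assertion; the paper's iterated-integral approach is more direct and avoids exactly the ``delicate quantitative point'' you flag.
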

\begin{proof}
For any $f\in L^{\infty}(\mathbb{C})$, it follows from expression \eqref{Cauchy} that
\begin{align}
\parallel fC_z \parallel_{L^\infty}&\leq \parallel f \parallel_{L^\infty}\frac{1}{\pi}\int_C\dfrac{|W^{(3)}(s)|}{|z-s|}dA(s),\nonumber
\end{align}
where $W^{(3)}(z)$ is defined in \eqref{W3}. According to  the Dbar-derivative $W(z)$  is equal to zero outside region $\Omega$, we only need to estimate the following expression now
\begin{align*}
\frac{1}{\pi}\int_\Omega\dfrac{|W^{(3)}(s)|}{|z-s|}dA(s).
\end{align*}
From the propositions \ref{pp3} and \ref{pp6}, the matrix norm of $|M^{(sol)}(z)|\leq\sqrt{1+z^{-2}}$. Hereafter, we further have
\begin{align}\label{guji}
\frac{1}{\pi}\int_\Omega\dfrac{|W^{(3)}(s)|}{|z-s|}dA(s)\lesssim
\frac{1}{\pi}\int_\Omega\dfrac{|W(s)|}{|z-s|}\frac{1+|s|^{-2}}{|1+s^{-2}|}dA(s).
\end{align}

For $j=1,4,5,8$, the norm of $M^{(sol)}(z)$ are bounded in the regions $\Omega_j$.  Since there are singularities $z=\pm i$ caused by the determinant $\det M^{(sol)}(z)$ in regions $\Omega_j$ $(j=2,3,6,7)$, the estimation at this time needs to be handled carefully. Taking the region $\Omega_2$ as an example, other regions can be proved similarly. For convenience, the  region $\Omega_2$ is divided into three parts $I_{i}$ $(i=1,2,3)$
\begin{align}	
\begin{split}
I_1=\mathbb{D}&(0,1-\upsilon_0)\cap\Omega_2,\hspace{0.3cm}I_3=\Omega_2\setminus\mathbb{D}(0,1+\upsilon_0),\\
&I_2=\mathbb{D}(0,1+\upsilon_0)\setminus\mathbb{D}
(0,1-\upsilon_0)\cap \Omega_2,
\end{split}
\end{align}
which are shown in Fig. 6.\\

\centerline{
		\begin{tikzpicture}[node distance=2cm]
\draw[gray, fill=gray!40] (0,0)--(2,3)--(0,3)--(0,0);
\draw[->][thick](-2,0)--(3,0);
\draw[->][thick](0,0)--(0,3);
\draw [thick](0,0)--(2,3);
\draw[thick][dashed] (0.8,0) arc (0:90:0.8);
\draw[thick][dashed] (2.1,0) arc (0:90:2.1);
\draw[fill] (0.1,0.3)node[above left]{$\small{I_1}$};
\draw[fill] (0.1,1.2)node[above left]{$\small{I_2}$};
\draw[fill] (0.1,2.2)node[above left]{$\small{I_3}$};
\draw[fill] (0,0)node[below]{$0$};
\draw[fill] (0.8,0)node[below]{$\upsilon_0$};
\draw[fill] (2.1,0)node[below]{$1+\upsilon_0$};
\end{tikzpicture}}
\centerline{\noindent {\small \textbf{Figure 6.} The three sub-region of $\Omega_2$. }}

Then the integral equation \eqref{guji} can be divided into three parts, we now consider \eqref{guji} in the region $I_3$.
\begin{align}
J_3=\int_{I_3}\dfrac{|W(s)|}{|z-s|}\frac{1+|s|^{-2}}{|1+s^{-2}|}dA(s). 	
\end{align}

For any $z=x+iy\in I_3$ with $x,y\in\mathbb{R}$, and let $s=u+iv\in I_3$. It is easy to verify that the second integrand in the integral equation \eqref{guji} is bounded from $|s|>1+\upsilon_0$. It follows from \eqref{DBARR2} that
\begin{align}
J_3\lesssim\int_{\Omega_2}\dfrac{|W(s)|}{|z-s|}dm(s)=\int_{\Omega_2}\dfrac{|\overline{\partial}R_2 (s)e^{-2it\theta}|}{|z-s|}dA(s) \label{I_3}.
\end{align}
Additionally, by the fact that $|e^{-2it\theta}|=e^{-Re(2it\theta)}\leq e^{-cuvt}\leq e^{-cut}$ in the region $I_3$ from the proposition \ref{pp2}, one has
\begin{align}
J_3\lesssim&\int_{0}^{+\infty}\int_{\frac{u}{\tan \psi}}^{+\infty}\dfrac{|\mathcal {H}_2' (i|s|)|e^{-cut}}{|z-s|}dvdu+\int_{0}^{+\infty}\int_{\frac{u}{\tan \psi}}^{+\infty}\dfrac{|s|^{-1/2}e^{-cut}}{|z-s|}dvdu\nonumber\\
&+\int_{0}^{+\infty}\int_{\frac{u}{\tan \psi}}^{+\infty}\dfrac{|\overline{\partial}\mathcal {X}_1(|s|)|e^{-cut}}{|z-s|}dvdu\triangleq J_3^{(1)}+J_3^{(2)}+J_3^{(3)},\nonumber
\end{align}
where $\left|\left||s-z|^{-1}\right|\right|_{L^{q}(\mathbb{R}^{+})}\leq|u-x|^{-1/p}$ for $1\leq q\leq+\infty$ with $\frac{1}{p}+\frac{1}{q}=1$ is used to estimate the inequality.

For the $J_3^{(1)}$, by using Cauchy-Schwarz inequality, we have
\begin{align}
J_3^{(1)}&=\int_{0}^{+\infty}\int_{\frac{u}{\tan \psi}}^{+\infty}\dfrac{|\mathcal {H}_2' (i|s|)|e^{-cut}}{|z-s|}dvdu\nonumber\\
&\leq \int_{0}^{+\infty}\parallel\widetilde{\rho}'\parallel_{L^2(i\mathbb{R})}\parallel |s-z|^{-1}\parallel_{L^2(\mathbb{R}^+)}e^{-cut}du\nonumber\\
&\leq \int_{0}^{+\infty}e^{-cut}|u-x|^{-\frac{1}{2}}du\lesssim t^{-\frac{1}{2}}.
\end{align}
Similar methods can verify the estimation of $J_3^{(3)}$, and now only $J_3^{(2)}$ needs to be estimated. For $p>2$ such that $\frac{1}{p}+\frac{1}{q}=1$, the following inequality holds
\begin{align*}
\left|\left||s|^{-1/2}\right|\right|_{L^{p}_v(\frac{u}{\tan \psi},\infty)}&=\left(\int_{\frac{u}{\tan \psi}}
^{\infty}\frac{1}{|u+iv|^{\frac{p}{2}}}dv\right)^{1/p}\\
&\lesssim u^{\frac{1}{p}-\frac{1}{2}}\left(\int_{\mathbb{R}}\frac{1}{(1+x^2)^{p/4}}\right)^{1/p}\lesssim u^{\frac{1}{p}-\frac{1}{2}}.
\end{align*}
It follows from the above inequality that the estimation of $J_3^{(2)}$ such that
\begin{align*}
J_3^{(2)}&=\int_{0}^{+\infty}\int_{\frac{u}{\tan \psi}}^{+\infty}\dfrac{|s|^{-1/2}e^{-cut}}{|z-s|}dvdu\\
&\lesssim \int_{0}^{\infty}\left|\left||s|^{-1/2}\right|\right|_{L^{p}_v(\frac{u}{\tan \psi},\infty)}
\left|\left||z-s|^{-1}\right|\right|_{L^{q}(\mathbb{R}^+)}e^{-cut}du\\
&\lesssim \int_{0}^{\infty}u^{\frac{1}{p}-\frac{1}{2}}|u-x|^{-1/p} e^{-cut}du\lesssim t^{-\frac{1}{2}}.
\end{align*}
Therefore we prove  $J_{3}\lesssim t^{-\frac{1}{2}}$. Now considering $J_{2}$, we notice that the singularity of $J_{2}$ at  $i$ is eliminated by expression \eqref{Ri}. Additionally, $0<\upsilon_0<1$ and $(1-\upsilon_0)\cos \psi>\frac{1}{2}$.
\begin{align}
J_2&\lesssim\int_{0}^2\int_{1/2}^{2}\dfrac{e^{-cut}}{|z-s|}\frac{1+|s|^2}{|s+i|}dvdu\lesssim \int_{0}^2\int_{1/2}^{2}\dfrac{e^{-2cut}}{|z-s|}dvdu\nonumber\\
&\lesssim \int_{0}^2 |u-x|^{-1/2}e^{-cut}du\lesssim t^{-1/2}.
\end{align}
Finally, for $J_1$, similar to the prove of $J_3$, one has
\begin{align}
J_1\lesssim \int_{0}^{1-\upsilon_0}\int_{u}^{1-\upsilon_0}\left( |\mathcal {H}_2' (i|s|)|+||s||^{-1/2}+|\overline{\partial}\mathcal {X}_1(|s|)|\right) \dfrac{e^{-cut}}{|z-s|}dvdu\lesssim t^{-1/2}.
\end{align}
So far, the proof of lemma \ref{lem4} is completed.
\end{proof}

On the basis of lemma \ref{lem4}, that is, the Cauchy-Green operator $C_z$ with good properties, we turn to prove proposition \ref{pp5} exhibited in the following contents.
\begin{proof}
An equivalent integral solution of $M^{(3)}(z)$ satisfying RH problem \ref{RHP7} such that
\begin{align}
M^{(3)}(z)=I+\frac{1}{\pi}\int_\mathbb{C}\dfrac{M^{(3)}(s)W^{(3)}(s)}{z-s}dA(s),
\end{align}
which can be written as the operator form
\begin{align}
M^{(3)}(z)=I\cdot\left(I-C_z \right)^{-1}.\label{deM3}
\end{align}
By using the operator $C_z$, the existence of operator $\left(I-C_z \right)^{-1}$ be guaranteed by the lemma \ref{lem4}. Additionally, from the expression \eqref{deM3} and the lemma \ref{lem4} as $t\rightarrow\infty$,  $||M^{(3)}(z)||_{L^{\infty}}\leq1$. Similar to prove the lemma \ref{lem4}, we only need to consider the estimation on region $\Omega_2$. The region $\Omega_2$ is similarly divided into three parts.
\begin{align}	
\frac{1}{\pi}\int_{\mathbb{C}}M^{(3)}(s)W^{(3)}(s)dA(s)&\lesssim \int_{\Omega_2}\left|W(s)
e^{-2it\theta}\right|\frac{1+|s|^{-2}}{|1+s^{-2}|}dA(s)\nonumber\\
&=\left(\int_{I_1}+\int_{I_2}+\int_{I_3}\right)
\left|W(s)e^{-2it\theta}\right|\frac{1+|s|^{-2}}{|1+s^{-2}|}dA(s)\nonumber\\
&\triangleq J_{4}+J_{5}+J_{6}.
\end{align}
For $J_6$, it follows by the fact $\frac{1+|s|^{-2}}{|1+s^{-2}|}<\infty$ that
\begin{align}
J_6\lesssim&\int_{0}^{+\infty}\int_{\frac{u}{\tan \psi}}^{+\infty}|\mathcal {H}_2'(i|s|)|e^{-2cuvt}dvdu+\int_{0}^{+\infty}\int_{\frac{u}{\tan \psi}}^{+\infty}|r|^{-\frac{1}{2}}e^{-cuvt}dvdu\nonumber\\
&+\int_{0}^{+\infty}\int_{\frac{u}{\tan \psi}}^{+\infty}|\overline{\partial}\mathcal {X}_1(|s|)|e^{-cuvt}dvdu
\triangleq J_{6}^{(1)}+J_{6}^{(2)}+J_{6}^{(3)}.\label{I6}
\end{align}
It is easy to check that $\left(\int_{\frac{u}{\tan \psi}}^{+\infty} e^{-2cuvtq}dv\right)^{\frac{1}{q}}
\lesssim e^{-u^2t}(ut)^{-\frac{1}{q}}$. Then for the integral $J_{6}^{(1)}$, one has
\begin{align}
J_{6}^{(1)}
\lesssim t^{-\frac{1}{2}}\int_{0}^{+\infty}\parallel\widetilde{\rho}'\parallel_{L^2(i\mathbb{R})} u^{-\frac{1}{q}}e^{-u^2t}du\lesssim t^{-\frac{3}{4}}.\nonumber
\end{align}
Similar methods can be used to estimate $J_{6}^{(3)}$. Finally,  with the help of the Cauchy-Schwarz inequality, there are
\begin{align}
J_{6}^{(2)}\lesssim t^{-\frac{1}{q}}\int_{0}^{+\infty}u^{\frac{2}{p}-\frac{3}{2}}e^{-u^2t}du\lesssim t^{-\frac{3}{4}},
\end{align}
for $4>p>2$ such that $\frac{1}{q}+\frac{1}{p}=1$. The method of proving $J_{6}$ can be applied to proving $J_{4}$. Now considering  $J_{5}$, from the estimation \eqref{Ri} we have
\begin{align}
J_{5}&\lesssim\int_{I_2}\dfrac{e^{-cut}}{|z-s|}\frac{1+|s|^2}{|s+i|}dA(s)\lesssim \int_{1/2}^2\int_{u}^{2}e^{-cuvt}dvdu\nonumber\\
&=\int_{1/2}^2(cut)^{-1}\left(e^{-cu^2t}-e^{-cut} \right)du \lesssim t^{-1}.
\end{align}
\end{proof}

\subsection{The asymptotic solution}\quad
Here we will reconstruct the long-time asymptotic solution of the mNLS equation \eqref{q1} under nonzero boundary conditions \eqref{q2} based on the previous analysis. The inverse transformation of \eqref{M1}, \eqref{M2}, \eqref{M3} and \eqref{ML} establishes the following relationship
\begin{align}
M(z)=T(\infty)^{\sigma_3}M^{(3)}(z)M^{(err)}(z)M^{(sol)}_\Lambda(z)R^{(2)}(z)^{-1}T(z)^{-\sigma_3}.
\end{align}
Taking $z\rightarrow\infty$ out of $\overline{\Omega}$, the solution of the mNLS equation \eqref{q1} can be derived
\begin{align}
M(z)=&T(\infty)^{\sigma_3}\left(I+ M^{(3)}_1(z)z^{-1}\right)M^{(err)}(z)M^{(sol)}_\Lambda(z)T(\infty)^{-\sigma_3}\times\nonumber\\
	&\left(1+z^{-1}\frac{1}{2\pi i} \int _{\mathbb{R}}\log (1-\rho(s)\widetilde{\rho}(s))ds\right)^{-\sigma_3} + \mathcal{O}(z^{-2}),
\end{align}
by using the lemma \ref{lem3}, propositions \ref{pp3} and \ref{pp5}. Additionally, the mNLS equation \eqref{q1} possesses long time asymptotic behavior
\begin{align}
M(z)=&T(\infty)^{\sigma_3} M^{(sol)}_\Lambda(z)T(\infty)^{-\sigma_3}\left(1+\frac{1}{2z\pi i}\int _{\mathbb{R}}\log (1-\rho(s)\widetilde{\rho}(s))ds\right)^{-\sigma_3}\nonumber\\
& + \mathcal{O}(z^{-2})+\mathcal{O}(t^{-3/4}).\nonumber
\end{align}
It follows from formula \eqref{recover-q} that
\begin{align}
q(x,t)&=exp\left(\frac{i}{\alpha}\int_{\pm\infty}^{x}\left(1-|q_\Lambda^{(sol)}(x,t)|^{2}\right)dy\right)
\lim_{z\rightarrow\infty}(zM(z))_{12}\nonumber\\
&=exp\left( \frac{i}{\alpha}\int_{\pm\infty}^x\left(1-|q_\Lambda^{(sol)}(x,t)|^{2}\right)dy \right) T(\infty)^{2}q^{(sol)}_\Lambda(x,t)+\mathcal{O}(t^{-3/4}),
\end{align}
where $q^{(sol)}_\Lambda(x,t)$ is defined by the corollary \ref{cor-1}. Finally, we have completed the proof of the main theorem of this work.

The asymptotic stability of soliton solutions of mNLS equation \eqref{q1} can be expressed as the following theorem.
\begin{thm}\label{first}
Consider the scattering data  $\left\lbrace  \rho(z),\left\lbrace \eta_n,C_n\right\rbrace^{4N_1+2N_2}_{n=1}\right\rbrace$ generated by the initial data $q_0(x)-q_{\pm}\in H^{1,1}(\mathbb{R})$. For the soliton region of space-time $\xi=\frac{x}{t}$ with $5<\xi<7$, let $q^{(sol)}_\Lambda(x,t)$ be the $\mathcal{N}(\Lambda)$-soliton solution corresponding to the modified scattering data
$\left\lbrace  0,\left\lbrace \eta_n,\mathring{c}_n\right\rbrace_{n\in\Lambda}\right\rbrace$ shown in corollary \ref{cor-1}.
Then for a large constant  $t_1$, as $t_1<t\to\infty$
\begin{align}
q(x,t)=exp\left\lbrace \frac{i}{\alpha}\int_{\pm\infty}^x(1-|q^{(sol)}_\Lambda(x,t)|^2)dy \right\rbrace T(\infty)^{2}q^{(sol)}_\Lambda(x,t)+\mathcal{O}(t^{-3/4}),\label{resultu}
\end{align}
where $\Lambda$, $q^{(sol)}_\Lambda(x,t)$ and $T(z)$ are shown in  \eqref{devide}, corollary \ref{cor-1} and  lemma \ref{lem3}  respectively.
\end{thm}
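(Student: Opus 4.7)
The plan is to assemble the long-time asymptotic formula by unwinding, in reverse order, the chain of transformations $M\to M^{(1)}\to M^{(2)}\to M^{(3)}M^{(sol)}\to M^{(3)}M^{(err)}M^{(sol)}_\Lambda$ built in Sections~5.1--5.5, and then reading off the $(1,2)$-entry coefficient of $z^{-1}$ via the reconstruction formula \eqref{recover-q}. Concretely, I would first verify that all the conjugation/opening steps are exact equivalences away from the prescribed $\bar\partial$-regions and small disks, so that no approximation has yet been introduced: the definition \eqref{M1} trades growing residues for bounded jumps on disks through the scalar function $T(z)$ of Lemma~\ref{lem3}, and \eqref{M2} replaces the jump on $\Sigma$ by a controlled $\bar\partial$-derivative supported in $\Omega$ using Proposition~\ref{pp4}.

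Next I would localize where the asymptotic mass actually lives. Since $\xi\in(5,7)$ avoids the stationary points of $\theta$, Proposition~\ref{pp2} yields an exponentially small bound on $V^{(2)}-I$ along $\Sigma^{(2)}$, so the RHP for $M^{(sol)}$ is driven solely by its residues. Factoring $M^{(sol)}=M^{(err)}M^{(sol)}_\Lambda$ as in \eqref{ML} isolates the genuine soliton content in $M^{(sol)}_\Lambda$, whose explicit partial-fraction form is handed to us by Proposition~\ref{pp6} and Corollary~\ref{cor-1}, while $M^{(err)}$ is controlled by small-norm theory: Proposition~\ref{pp3} gives $M^{(err)}=I+\mathcal{O}(e^{-2\varrho_0 t})$ and an $\mathcal{O}(e^{-2\varrho_0 t})$ bound on the $z^{-1}$ coefficient. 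The pure $\bar\partial$-problem for $M^{(3)}=M^{(2)}(M^{(sol)})^{-1}$ contributes only $\mathcal{O}(t^{-3/4})$ by Proposition~\ref{pp5}, which is the dominant error and sets the claimed rate.

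With these ingredients in hand, the final step is algebraic. I would invert the transformations to write
\begin{align*}
M(z)=T(\infty)^{\sigma_3}M^{(3)}(z)M^{(err)}(z)M^{(sol)}_\Lambda(z)\,R^{(2)}(z)^{-1}T(z)^{-\sigma_3},
\end{align*}
take $z\to\infty$ along a direction outside $\overline{\Omega}$ (so $R^{(2)}\equiv I$), insert the large-$z$ expansions \eqref{expT}, the bound \eqref{E1t} for $M^{(err)}$, and the expansion from Proposition~\ref{pp5}, and extract the $(1,2)$-entry of the $z^{-1}$ coefficient. The scalar factor $T(\infty)^2$ multiplies the soliton amplitude, the integral in \eqref{expT} is absorbed into $T(\infty)^{\pm\sigma_3}$ and into the phase factor $\exp\{\tfrac{i}{\alpha}\int_{\pm\infty}^x(1-|q_\Lambda^{(sol)}|^2)dy\}$ appearing in \eqref{recover-q}, and the two error sources combine to $\mathcal{O}(t^{-3/4})+\mathcal{O}(e^{-2\varrho_0 t})=\mathcal{O}(t^{-3/4})$, giving \eqref{resultu}.

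The main obstacle will be the bookkeeping at the end: checking that the symmetry and normalization constraints force $\nu_-(x,t;q)-\nu_-(x,t;q^{(sol)}_\Lambda)$ to reduce exactly to the phase prefactor in \eqref{resultu}, and confirming that the scalar contribution coming from the $z^{-1}$ term in \eqref{expT} is cancelled by the conjugation by $T(\infty)^{-\sigma_3}$ acting on the diagonal part of $M^{(sol)}_\Lambda$, so that only $T(\infty)^2$ survives off-diagonally. Everything else is a routine application of the estimates already established; the genuinely analytic content is concentrated in the $\bar\partial$-estimate of Lemma~\ref{lem4} and Proposition~\ref{pp5}, which is what forbids a better error than $t^{-3/4}$.
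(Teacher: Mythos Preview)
Your proposal is essentially the same approach as the paper's own proof: the paper also inverts the chain of transformations to obtain $M(z)=T(\infty)^{\sigma_3}M^{(3)}(z)M^{(err)}(z)M^{(sol)}_\Lambda(z)R^{(2)}(z)^{-1}T(z)^{-\sigma_3}$, takes $z\to\infty$ outside $\overline{\Omega}$ so that $R^{(2)}\equiv I$, inserts the expansions from Lemma~\ref{lem3}, Proposition~\ref{pp3}, and Proposition~\ref{pp5}, and reads off the $(1,2)$-entry via \eqref{recover-q}. The bookkeeping concerns you flag about $\nu_-$ and the off-diagonal $T(\infty)^2$ factor are handled in the paper exactly as you anticipate, with the two error contributions combining to $\mathcal{O}(t^{-3/4})$.
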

\begin{rem}
Actually, the constraint on $\xi=x/t\in(5,7)$  in the theorem \eqref{first} is only used to limit the length of this work. This is a critical region. The other regions shown in Figure 3 can also be discussed similarly. That is, in combination with \cite{fNLS}, to deal with the situation with stable phase points, the parabolic cylinder function can be used as usual to match, and the small neighborhood of the phase point provides the leading term for the long-time asymptotic behavior of the solution.
\end{rem}

\section*{Acknowledgements}\quad
This work was supported by the National Natural Science Foundation of China under Grant No. 11975306, the Natural Science Foundation of Jiangsu Province under Grant No. BK20181351, the Six Talent Peaks Project in Jiangsu Province under Grant No. JY-059, and the Fundamental Research Fund for the Central Universities under the Grant Nos. 2019ZDPY07 and 2019QNA35.

\renewcommand{\baselinestretch}{1.2}
\section*{References}

\end{document}